\documentclass{amsart}
\usepackage{amsmath, mathrsfs,amsthm, amssymb,color,fullpage,amsrefs,mathtools,marvosym,enumerate}
\usepackage[utf8]{inputenc}
\usepackage[normalem]{ulem}
\usepackage{hyperref}
\usepackage{tikz}
\usetikzlibrary{decorations.pathreplacing}
 \def\beq{\begin{eqnarray}}
\def\eeq{\end{eqnarray}}
\newcommand{\nn}{\nonumber}

\newtheorem{thm}{Theorem}[section]
\newtheorem*{thm*}{Theorem}
\newtheorem{prop}[thm]{Proposition}
\newtheorem*{prop*}{Proposition}
\newtheorem{cor}[thm]{Corollary}
\newtheorem{rem}[thm]{Remark}
\newtheorem{lemma}[thm]{Lemma}

\newtheorem{exa}[thm]{Example}

\newtheorem*{question*}{Question}

\newtheorem{nota}[thm]{Notation}
\newtheorem{defn}[thm]{Definition}
\newtheorem{remark}[thm]{Remark}

\newtheorem{conv}[thm]{Convention}
\newtheorem{notaconv}[thm]{Notational Conventions}
\newtheorem{conj}[thm]{Conjecture}

\definecolor{pink}{rgb}{1,0,1}   
\newcommand{\pink}[1]{{\color{pink}{#1}}}

    \newcommand{\R}{\mathbb R}
    \newcommand{\pa}{\partial}
    
    \newcommand{\cD}{\mathscr{D}}

          \newcommand{\bell}{\pmb{\ell}}
         \newcommand{\balpha}{\pmb{\alpha}}

    \newcommand{\cH}{\mathcal{H}}

    \newcommand{\Z}{\mathbb Z}
\newcommand{\Om}{\Omega}        

         \def\abs{\operatorname{ab}}  
          \def\pmca{\pmb{C}_{\abs}}  
          
    \newcommand{\bxi}{\pmb{\xi}}
   
     \def\omred{\Om^{\operatorname{red}}}

\newcommand{\A}{\mathcal{A}}

\newcommand{\al}{\alpha}
     \newcommand{\Lc}{\mathcal{L}}
     \newcommand{\lmin}{\ell_{\min}}
         \newcommand{\pc}{\mathcal{P}}
      
\title[]{The Steklov spectrum of convex polygonal domains II: investigating spectral determination}

\author[E. Dryden]{Emily B. Dryden}
\author[C. Gordon]{Carolyn Gordon} 
\author[J. Moreno]{Javier Moreno}
\author[J. Rowlett]{Julie Rowlett}
\author[C. Villegas Blas]{Carlos Villegas-Blas}

\address{Emily Dryden, Department of Mathematics,  Bucknell University,  Lewisburg, PA 17837 USA} 
\urladdr{\href{http://www.unix.bucknell.edu/~ed012/}{http://www.unix.bucknell.edu/~ed012/}}
\email{\href{mailto:}{emily.dryden@bucknell.edu}}

\address{Carolyn Gordon, Department of Mathematics, Dartmouth College, Hanover, NH 03755 USA} 
\urladdr{\href{https://math.dartmouth.edu/~gordon/}{https://math.dartmouth.edu/~gordon/}}
\email{\href{mailto:}{carolyn.s.gordon@dartmouth.edu}}

\address{Javier Moreno, Department of Mathematics,  Universidad de Los Andes,  111711, Bogotá, Colombia}
\email{\href{mailto:}{jd.morenop@uniandes.edu.co}}
\address{Julie Rowlett,  Mathematical Sciences, Chalmers University,  412 96, Gothenburg, Sweden} 
\urladdr{\href{http://www.math.chalmers.se/~rowlett}{http://www.math.chalmers.se/~rowlett}}
\email{\href{mailto:julie.rowlett@chalmers.se}{julie.rowlett@chalmers.se}}

\address{Carlos Villegas-Blas, Instituto de Matemáticas
Universidad Nacional Autónoma de México, Ciudad Universitaria, 04510, México, CDMX}
\urladdr{\href{https://www.matem.unam.mx/fsd/villegas}{https://www.matem.unam.mx/fsd/villegas}}
\email{\href{mailto:carlos.villegas@im.unam.mx}{carlos.villegas@im.unam.mx}}

\begin{document}
\maketitle 

\begin{abstract}
The extent to which the geometry of an object is determined by some associated spectral data is a longstanding problem.  We investigate this problem in the context of the Steklov spectrum, focusing on convex polygons.  We prove that almost all triangles are uniquely determined by their Steklov spectra within the class of all triangles; further results depending on the types of angles in the triangles are given.  We examine three special classes of convex quadrilaterals--rectangles, parallelograms, and kites--and obtain results ranging from unique spectral determination to determination up to three possibilities.  For regular $n$-gons, we are again able to prove spectral determination within certain classes of polygons.  More generally, we investigate the extent to which the Steklov spectrum distinguishes convex polygons from simply-connected domains with smooth boundary; that is, does the Steklov spectrum detect corners?  We prove that triangles and quadrilaterals are spectrally distinguished from such smoothly bounded domains; moreover, we show that having the same Steklov spectrum as such a domain imposes substantial restrictions on the edge lengths of higher-order $n$-gons.  Throughout, our main tool is the characteristic polynomial developed in \cite{lpps19,klpps21}. 
\end{abstract}

\section{Introduction} \label{s:intro} 

Inverse spectral problems on simply-connected bounded planar domains ask the extent to which spectral data determine the geometry of the domain.  For example, in the case of the Dirichlet Laplacian, the spectrum determines the area, the perimeter, and the total boundary curvature.   Moreover, this spectrum distinguishes  domains with corners from those with smooth boundary \cite{RL2015, NRSI}, although it's not known whether it recognizes the number of corners.  Within the class of domains with smooth boundary, the Dirichlet Laplacian distinguishes ellipses of small eccentricity \cite{HZ2022}. In the case of polygons, the Dirichlet Laplacian distinguishes triangles within the class of triangles \cite{Durso, Grieser-Maronna}, regular polygons among all convex piecewise smooth planar domains \cite{EGS}, and non-obtuse trapezoids within the class of such trapezoids  \cite{HLRD}.  It's not known whether the Dirichlet Laplacian distinguishes among all quadrilaterals; even in seemingly simple geometric contexts, inverse spectral problems can resist solution.

We focus here on the Steklov eigenvalue problem on simply connected planar domains $\Omega$ with piecewise $C^1$ boundary, and especially on convex polygons.   The Steklov eigenvalue problem consists of finding all real numbers $\sigma$ for which a non-trivial function $u$ satisfies
\[ \Delta u = 0 \textrm{ in } \Omega, \quad \frac{\pa u}{\pa n} = \sigma u \, \, \textrm{ on } \pa \Omega,  
\]
where $\Delta$ is the Laplacian and 
 $\frac{\pa}{\pa n}$ is the exterior normal derivative.
The spectrum of the Steklov problem is discrete with 
\[ 0 = \sigma_1(\Omega) < \sigma_2 (\Omega) \leq \cdots \leq \sigma_m (\Omega) \leq \cdots \nearrow + \infty.\]
Equivalently, the Steklov eigenvalues are those of the Dirichlet-to-Neumann map, $\cD_\Omega$, 
\[ \left . \cD_\Omega : H^{1/2} (\pa \Omega) \to H^{-1/2} (\pa \Omega), \quad \cD_\Omega f = \frac{ \pa \cH_\Omega f}{\pa n} \right|_{\pa \Omega}.\]
 Here $H^s$ denotes the Sobolev space $W^{s,2}$, and $\cH_\Omega f$ is the harmonic extension of $f$ to $\Omega$.  For compact Riemannian manifolds with smooth boundary, the Dirichlet-to-Neumann operator is an elliptic pseudodifferential operator.    However, for manifolds with only piecewise smooth or less regular boundary, the Dirichlet-to-Neumann operator fails to be pseudodifferential.   
Nonetheless, Weyl asymptotics have been obtained for all compact Riemannian surfaces with Lipschitz boundary \cite[Theorem 1.1]{klp_23} and for bounded Euclidean domains of any dimension with Lipschitz boundary \cite[Theorem 1.2]{rozenblum23}.
Consequently, as in the case of the Dirichlet Laplacian, Weyl asymptotics show that the perimeter is a Steklov spectral invariant; however, they say nothing about the area.  Area is not expected to be a Steklov spectral invariant, although no counterexamples are known in the case of planar domains.\footnote{The Dirichlet-to-Neumann operator, and thus the Steklov spectrum of a compact Riemannian surface, are invariant under conformal changes of metrics with conformal factor one on the boundary.  This yields ``trivial'' examples of surfaces of different area with the same Steklov spectrum.}
For much more extensive background on the Steklov problem, we refer interested readers to survey articles on the Steklov problem by Girouard \& Polterovich \cite{gir_pol} and by Colbois et. al \cite{pre_survey}.  For historical background and physical implications, we refer to Kuznetsov et. al \cite{legacy}. 

Two bounded planar domains (or, more generally, compact Riemannian surfaces) $\Om$ and $\Om'$ with smooth boundary have the same Steklov asymptotics to infinite order, i.e., $\sigma_j(\Om)-\sigma_j(\Om')=O(j^{-\infty})$, if and only if they have exactly the same number and lengths of boundary components \cite{GPPS}.  However, the situation is much more complicated when one allows corners.  The article of  Levitin, Parnovski, Polterovich and Sher \cite{lpps19} and  subsequent article \cite{klpps21}, joint also with Krymski, yield powerful results in the case of curvilinear polygons with all interior angles in $(0,\pi)$. They associate to each such $\Om$ a trigonometric polynomial $P_\Om$, referred to as the \emph{characteristic polynomial} of $\Om$.  The polynomial depends only on the edge lengths and angles of $\Om$.  In the former paper, they show that the roots of $P_\Om$ yield the (lower-order) Steklov spectral asymptotics of $\Om$.   In the latter, they show that the characteristic polynomial is a Steklov spectral invariant.   By applying this invariant, they show that the Steklov spectrum determines the edge lengths for generic curvilinear $n$-gons with angles in $(0,\pi)$, and it moreover determines the angles up to countably many explicit possibilities.  The genericity conditions, referred to as \emph{admissibility}, consist of an incommensurability condition on the edge lengths together with exclusion of angles of the form $\frac{\pi}{2m+1}$ with $m\in \Z^+$.  

The current article and its predecessor \cite{steklov1} address the inverse Steklov problem for convex Euclidean polygons.   Our primary tool is the characteristic polynomial described above.   In \cite{steklov1}, we showed that if $\Om$ is a convex $n$-gon satisfying the generic conditions of admissibility, then there are at most finitely many convex $n$-gons that are Steklov isospectral to $\Om$.   Moreover, if all the angles of $\Om$ are obtuse, then $\Om$ is uniquely determined by its Steklov spectrum among all convex $n$-gons.  We also obtained lower bounds, depending only on the minimal interior angle and the perimeter, on the Steklov eigenvalues of any convex $n$-gon.   Consequently, there is a uniform lower bound on the angles of any collection of mutually Steklov isospectral convex $n$-gons.    This, together with the characteristic polynomial, enabled us to obtain Steklov spectral finiteness results for $n$-gons under weaker genericity conditions than admissibility.

The current article focuses on the following:

\begin{itemize}
\item Many interesting classes of polygons, such as isosceles triangles, rectangles, kites, regular polygons, etc., fail the genericity conditions. In such cases, we use the characteristic polynomial along with Euclidean geometric considerations to obtain Steklov finiteness results and, more often, explicit bounds on the size of Steklov isospectral sets.  We also obtain some Steklov spectral uniqueness results.
\item In contrast to the case of the Dirichlet spectrum, it is not known whether the Steklov spectrum distinguishes simply-connected planar domains from those with corners.   We investigate this question in the context of convex polygons versus smoothly-bounded, simply-connected domains.   
\end{itemize}

We note that while there are many examples known of Steklov isospectral non-isometric manifolds in all dimensions greater than or equal to two (e.g., \cite{GHW}), the question of existence of Steklov isospectral non-isometric plane domains remains open. 

\subsection{Overview of our results}
In these introductory comments, we will frame our positive results in terms of Steklov isospectrality.   However, in the proofs of all the affirmative results establishing Steklov spectral uniqueness or finiteness, we do not use the full strength of the Steklov spectrum.  We only use the characteristic polynomial, which as noted above is a Steklov spectral invariant.  The actual statements of the results in the remainder of the paper will be stated with the weaker hypothesis.    In all cases, uniqueness or finiteness will mean up to congruence.

\subsubsection{Triangles}  A sampling of our results on triangles:
\begin{itemize}
    \item Any Steklov isospectral set of triangles is finite.
    Moreover, almost all triangles are uniquely determined by their Steklov spectra within the class of all triangles.   (See Theorem~\ref{th:triangles123} for a more complete statement.)
    \item Within the class of all non-obtuse triangles, each triangle is uniquely determined by its Steklov spectrum.    Moreover, right triangles and almost all acute triangles can be distinguished from obtuse triangles by their Steklov spectra.   (See Theorem~\ref{th:non_obtuse} and Proposition~\ref{prop.acute among all}.)
    \item  Isosceles triangles are mutually distinguishable by their Steklov spectra. (See Theorem~\ref{th:iso_tri}.) However, as noted above, our results do not use the full strength of the Steklov spectrum but only the characteristic polynomial; this polynomial does not always distinguish isosceles triangles from scalene triangles. (See Remark~\ref{rem:iso_triangles}.)  It is still open whether the Steklov spectrum distinguishes them.   
\end{itemize}

\subsubsection{Convex quadrilaterals}  A sampling of our results on three special classes of convex quadrilaterals:

\begin{itemize}
\item Rectangles: Within the class of convex quadrilaterals and triangles, each rectangle is uniquely determined by its Steklov spectrum.  (See Theorem 4.1.)
Note that \cite[Cor. 1.8]{cuboid} shows that each rectangle is uniquely determined within the class of all rectangles.

\item Parallelograms: Depending on the angles that appear in a parallelogram, either the edge lengths or angles of the parallelogram are uniquely determined by the Steklov spectrum. (See Theorem 4.2 for more precise statements.)
\item Kites: Within the class of kites, almost all kites are determined up to three possibilities by their Steklov spectra, and a substantial collection of kites are uniquely determined. (See \S \ref{ss:kites} for more precise statements.)
\end{itemize}

\subsubsection{Regular polygons}  A sampling of our results on regular polygons:
\begin{itemize}
\item Within the class of regular $n$-gons, each element is uniquely determined by its Steklov spectrum. (See Theorem 5.3.)
\item Regular $n$-gons are distinguished by their Steklov spectrum from all $m$-gons, $m>n$, with all angles greater than $\frac{\pi}{3}$, and from all non-equilateral $n$-gons with all angles greater than $\frac{\pi}{3}$. (These results follow from Example 5.2.)  
\item We have additional results for small values of $n$.  As one example, no convex quadrilateral can have the same Steklov spectrum as an equilateral triangle. (See Theorem 5.5 and Remark 5.6.)  
\end{itemize}

\subsubsection{Convex polygons and smooth domains} In exploring whether the Steklov spectrum distinguishes convex polygons from simply-connected domains with smooth boundary, we obtained results including the following:
\begin{itemize}
\item The Steklov spectrum distinguishes triangles and quadrilaterals from smoothly bounded simply-connected plane domains. (See Theorem 6.1.)

\item For higher-order $n$-gons, we show that the possibility of Steklov isospectrality to a smoothly-bounded simply-connected plane domain entails substantial restrictions on the edge lengths. 
\end{itemize}

\subsection{Plan of the paper}
Section~\ref{s:prelim} contains the necessary background, focusing primarily on the characteristic polynomial and results of \cite{klpps21}.   Sections~\ref{ss:triangles}, \ref{s:quad}, and \ref{s:regular} address triangles, quadrilaterals, and regular polygons, respectively. In Section~\ref{s:smooth}  we explore the question of distinguishing convex polygons from smoothly-bounded, simply-connected plane domains via the characteristic polynomial. 

\section*{Acknowledgements} This work was initiated at the BIRS-CMO workshop 22w5149.  We sincerely thank the organizers as well as all sponsors of the workshop. We are grateful to David Sher, Alexandre Girouard and Iosif Polterovich for inspiring and insightful discussions and correspondence. C. Villegas-Blas was partially supported by 
projects CONACYT Ciencia B\'asica  CB-2016-283531-F-0363 and UNAM-PAPIIT-IN 116323 and 115126. 

\section{Preliminaries} \label{s:prelim} 
Although we focus on convex polygons, we will use results developed in \cite{klpps21} for simply-connected curvilinear $n$-gons in $\R^2$; we assume these $n$-gons have piecewise smooth edges and all angles at the vertices lie in the interval $(0,\pi)$.   We follow the same labeling convention for the edge lengths and interior angles at the vertices as  \cite{klpps21} and as described in Notational Conventions 2.1 of \cite{steklov1}.

\begin{notaconv}\cite[Nota.\! Conv. 2.1]{steklov1}\label{nota:bell and bal} We use $\ell_1,\dots, \ell_n$ to denote edge lengths and $\alpha_1,\dots,\alpha_n$ to denote the interior angles at the vertices.   We will usually abuse notation and use the same notation $\ell_j$, respectively  $\alpha_j$, to denote the $j$th edge, respectively vertex.  We always number the edges and vertices cyclically with vertex $\alpha_j$ occurring between edges $\ell_j$ and $\ell_{j+1}$ (see Figure~\ref{fig:tri_con}); $\ell_{n+1}$ is understood to be $\ell_1$.   The data associated with a curvilinear $n$-gon $\Om$ consists of its vectors of edge lengths and angles
$$\bell=(\ell_1,\dots,\ell_n)\mbox{\,\,\,and\,\,\,}\balpha=(\alpha_1,\dots, \alpha_n).$$
The cyclic labeling is unique only up to $2n$ possible permutations, corresponding to a choice of orientation of $\partial\Om$ and a choice of initial edge.    
\end{notaconv}

 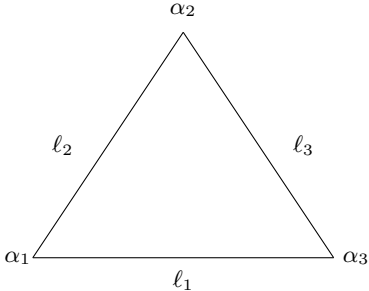
\begin{figure}[h] \centering 
\begin{tikzpicture}
\draw   (-2,0) --  (2,0);
\draw  (-2, 0) -- (0, 3); 
\draw (0, 3) -- (2, 0); 
\node at (-2.2,0) {\small $\alpha_1$}; 
\node at (-1.6, 1.5) {\small $\ell_2$}; 
\node at (0, -0.3) {\small $\ell_1$}; 
\node at (2.3,0) {\small $\alpha_3$}; 
\node at (0, 3.3) {\small $\alpha_2$};
\node at (1.6, 1.5) {\small $\ell_3$}; 
\end{tikzpicture}
\caption{A triangle with angles and edges labeled as in \cite{klpps21}.}
\label{fig:tri_con}
\end{figure}  

Every curvilinear $n$-gon $\Om$ has an associated characteristic polynomial $P_\Om$, first introduced in \cite[Equation (2.20)]{lpps19} (see also \cite{klpps21}).  This will be our main tool for obtaining inverse spectral results.

\begin{defn}\label{def: char poly} Let $\Om$ be a curvilinear $n$-gon with data $\bell$ and $\balpha$ as in Notational Conventions~\ref{nota:bell and bal}.
Define 
\beq\label{eq:c(alpha)} c(\alpha_j)=\cos\left(\frac{\pi^2}{2\alpha_j}\right) \ \ \ \text{and} \ \ \ s(\alpha_j)=\sin\left(\frac{\pi^2}{2\alpha_j}\right).\eeq 
For $\bxi \in \{\pm 1\}^n$, we set 
\[
 a_{\bxi}=\prod_{\{j:\xi_j\neq \xi_{j+1}\}}\,c(\alpha_j),
 \]
where $a_{\bxi}$ equals $1$ if the product is over the empty set.   The subscripts in $\bxi$ are cyclically ordered, so $\xi_{n+1}$ is understood to be $\xi_1$.
 {\rm (}In the definition of $a_{\bxi}$, the product is either empty or contains an even number of factors, since there is always an even number of sign changes as one moves cyclically through the entries of $\bxi$ in order to return to the starting value.{\rm )} Define an equivalence relation on $\{\pm 1\}^n$ by \[\bxi\sim\bxi ' \iff \bxi=\pm\bxi'.\] 
 Denote the equivalence class of $\bxi$ by $[\bxi]$, and define $a_{[\bxi]} :=a_{\bxi}$.
The \em characteristic polynomial \em $P_\Om: \R\to \R$
is a trigonometric polynomial given by
\[ P_{\Omega} (t) := \sum_{[\bxi]\in \{\pm1\}^n/\sim}\, a_{[\bxi]} \cos(|\bxi \cdot \bell| t) - \prod_{j=1} ^n s (\alpha_j).\]

The definition of curvilinear $n$-gon is understood to include the case $n=0$, with a 0-gon being a simply-connected smoothly bounded plane domain.  The characteristic polynomial in this case is given by 
 \beq P_\Om(t)=\cos(\ell t) -1. \label{eq:cp_smooth_domain} \eeq
 where $\ell$ is the perimeter. 
\end{defn}  

\begin{conv}\label{conv:perim} Observe that the maximal cosine frequency occurring in $P_\Om$ is $\ell_1+\dots + \ell_n$ and the term $\cos(|\ell_1+\dots +\ell_n| t)$, corresponding to $\bxi = \pm (1,1,\dots, 1),$ necessarily has coefficient equal to one and frequency equal to the perimeter.  Thus the perimeter is an invariant of the characteristic polynomial.  Moreover, rescaling all the edge lengths by a constant $c$ results in rescaling all the cosine frequencies by the factor $c$ and does not otherwise affect the characteristic polynomial.    Thus we will always assume that all (possibly curvilinear) $n$-gons under consideration have perimeter one.
\end{conv}

The characteristic polynomial $P_\Om$ depends only on the geometry of $\Om$, more specifically only on $\balpha(\Om)$ and $\bell(\Om)$.  Details about the relationship between the roots of $P_\Om$ and the Steklov eigenvalues can be found in \cite{lpps19} and \cite{klpps21}; for our present purposes, the key result is the following.  

\begin{thm}\cite[Theorem 1.16]{klpps21}
    \label{poly is spec invar}
The characteristic polynomial $P_{\Omega}$ of a curvilinear $n$-gon $\Om$ with all angles in $(0,\pi)$ can be constructed algorithmically from the Steklov spectrum of $\Om$.   In particular, the characteristic polynomial is a Steklov spectral invariant of $\Om$.\end{thm}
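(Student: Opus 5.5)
Here the statement is the cited result \cite[Theorem 1.16]{klpps21}. My plan is to reconstruct its proof in three stages: an asymptotic description of the spectrum, recovery of the root multiset of $P_\Om$ from that description, and reconstruction of $P_\Om$ from its roots.

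\emph{Step 1 (spectral asymptotics).} The first input is the quasimode/asymptotic result of \cite{lpps19}. Normalize the perimeter to $1$ as in Convention~\ref{conv:perim}; this is legitimate because the perimeter is read off from Weyl's law. Then, after a suitable enumeration, the Steklov eigenvalues satisfy
\[\sigma_m = \tilde\sigma_m + o(1),\]
where $\tilde\sigma_m$ runs through the nonnegative roots of $P_\Om$, counted with multiplicity. The error is in fact $O(m^{-\epsilon})$. I would cite this rather than reprove it. The proof there glues exact local solutions near each corner (Peters' sloping-beach solutions) with plane waves along the edges, and uses transfer matrices at the vertices. Those transfer matrices produce the coefficients $c(\alpha_j)$ and $s(\alpha_j)$.

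\emph{Step 2 (recovering the roots from the spectrum).} The difficulty here is that an $o(1)$ error does not let one read off the roots directly. The key structural fact to use is that $P_\Om$ is a trigonometric polynomial, so its root set is \emph{almost periodic}. The counting function $N(t)$ of its roots is $t/\pi$ plus a bounded almost periodic correction.

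So I would proceed as follows. From the spectrum, form the sequence $\sigma_m$. For each candidate root $r$, count the eigenvalues lying in a small window $(r-\delta,r+\delta)$ across many almost-periods $T$, and take averages or limits along these almost-periods. Almost periodicity supplies translates $T_k\to\infty$ under which the root configuration near $T_k+r$ returns within $\delta$ of the configuration near $r$. Since the eigenvalue errors tend to $0$, the shifted eigenvalue clusters converge to the true roots, with multiplicities.

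The obstacle is that the almost-periods are not known a priori, because we do not yet know $P_\Om$. To get around this, I would define the recovered root multiset as the set of limit configurations of $\{\sigma_m - T\}$ as $T\to\infty$. These limits form the hull of the almost periodic root set, and that hull contains the root set of $P_\Om$ itself, up to translation.

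\emph{Step 3 (reconstructing the polynomial).} To resolve the translation ambiguity, I would use that $P_\Om$ is even, which forces roots symmetric about $0$. A cleaner route is to identify the frequencies and amplitudes via a Fourier-type limit, and I would argue this step as follows.
\begin{itemize}
\item Consider the function $\sum_m \cos(\sigma_m\,\cdot)$ in the distributional sense, or equivalently compute the averaged quantities $\lim_{T\to\infty}\frac1T\sum_{\sigma_m<T} e^{i\lambda\sigma_m}$.
\item Because the errors tend to $0$, these limits agree with the corresponding limits for the true root set.
\item For the zero set of a trigonometric polynomial, these limits are computable from its frequencies $|\bxi\cdot\bell|$ and coefficients, via the argument principle or a Jensen-type formula applied to $P_\Om(t)$ extended to a strip.
\item Inverting this relation recovers the polynomial, up to the normalization that the top coefficient is $1$.
\end{itemize}

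I expect this inversion in Step 3, together with the handling of multiplicities and near-coincident roots, to be the hardest part. My first attempt would be to compare exponential sums over the eigenvalues with the logarithmic derivative $P_\Om'/P_\Om$ integrated along a horizontal line $\operatorname{Im} t = \eta$. On such a line $P_\Om$ is dominated by $\tfrac12 e^{\eta t}$ from the top frequency, so the logarithm expands into a convergent series whose coefficients determine all the $a_{[\bxi]}$ and the constant term $\prod_j s(\alpha_j)$.
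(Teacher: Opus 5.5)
The paper gives no proof of this statement. It is imported from \cite{klpps21}, so your reconstruction has to stand on its own. Your Step~3 is a workable route (in effect a trace formula for the quasi-eigenvalues). However, Step~2 is wrong as written, and Step~3 omits the one analytic fact on which it rests. In Step~2, the set of limit configurations of $\{\sigma_m-T\}$ is the hull of the root set, and this is not the set of translates of a single configuration. It consists of the zero sets of all functions $\cos(t+\theta_0)+\sum_k b_k\cos(\omega_k t+\theta_k)+C$ with $(\theta_0,\theta_1,\dots)$ in the closure of the Kronecker orbit $\{(\tau,\omega_1\tau,\dots)\bmod 2\pi\}$. Evenness does not single out $P_\Om$ in this family. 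For example, for $\cos t+b\cos(\omega t)+C$ with $b\neq0$ and $\omega\notin\mathbb{Q}$, the even function $\cos t-b\cos(\omega t)+C$ has the same hull but a different zero set. The hull is translation invariant, so it discards exactly the absolute-position information that separates $P_\Om$ from such competitors, and no symmetry argument repairs this. Since Step~3 does not use Step~2, simply drop it.

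In Step~3, the spectral means $\lim_{T\to\infty}\frac1T\sum_{0\le\sigma_m<T}e^{i\lambda\sigma_m}$ only see the real nonnegative zeros of $P_\Om$. By contrast, the argument principle on $[x_0,T]\times[-\eta,\eta]$, with $\eta$ large enough for your logarithmic series to converge, counts every zero with real part in $[x_0,T]$. So the identification you need holds only if all zeros of the entire function $P_\Om$ are real. You never state or prove this, and it is the real crux, not multiplicities or near-coincident roots, which Cesàro means cannot detect anyway. The fact is true and comes from the quantum-graph (scattering) structure of the quasi-eigenvalue condition (cf.\ \cite{lpps19}). One checks that, up to a nowhere-vanishing factor, $P_\Om(t)=\det(I-U(t))$. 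Here $U(t)$ composes a fixed orthogonal matrix, assembled from vertex blocks $\begin{pmatrix} c(\alpha_j)& s(\alpha_j)\\ s(\alpha_j)& -c(\alpha_j)\end{pmatrix}$, with the edge propagators $e^{i\ell_jt}$. Hence $U(t)$ is a strict contraction for $\operatorname{Im}t>0$, so $P_\Om$ has no zeros there, and $P_\Om(\bar t)=\overline{P_\Om(t)}$ handles $\operatorname{Im}t<0$. With this in hand your plan closes:
\begin{enumerate}
\item[(i)] $P_\Om'/P_\Om$ is almost periodic on each line $\operatorname{Im}t=\eta\neq0$, and its Bohr--Dirichlet series has coefficients independent of $\eta>0$.
\item[(ii)] The vertical sides of the rectangle contribute $O(1)$ provided $T$ stays a fixed distance from the zero set.
\item[(iii)] Write $\log\bigl(2e^{it}P_\Om(t)\bigr)=\sum_{\lambda>0}d_\lambda e^{i\lambda t}$ for $\operatorname{Im}t$ large. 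Then the spectral mean at frequency $\lambda>0$ equals $-\lambda d_\lambda/(2\pi)$, and exponentiating recovers $P_\Om$ exactly.
\end{enumerate}
Two small corrections. First, the dominant term on $\operatorname{Im}t=\eta$ is $\tfrac12e^{-it}$, of modulus $\tfrac12e^{\eta}$, not $\tfrac12 e^{\eta t}$. Second, what you recover is the function $P_\Om$: one coefficient per distinct frequency, together with the full constant term $\sum_{\bxi\cdot\bell=0}a_{[\bxi]}-\prod_j s(\alpha_j)$. You do not recover the individual $a_{[\bxi]}$ or $\prod_j s(\alpha_j)$, but the theorem asserts no more than the function.
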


Theorem \ref{poly is spec invar} shows that if two curvilinear polygons share a common Steklov spectrum, then they also share the same characteristic polynomial. A more precise statement is that two curvilinear polygons $\Om$ and $\Om'$ have the same characteristic polynomial if and only if their Steklov eigenvalues satisfy $\sigma_j(\Om)-\sigma_j(\Om') =o(1)$ as $j\to \infty$.    (See \cite[Theorem 1.13 and Remark 1.15]{klpps21}.) Numerically, it appears that the characteristic polynomial does not completely determine the Steklov spectrum; see, for instance, Examples 3.5 and 3.6 as well as Remark 3.9 in \cite{klpps21}. To explore the geometric features encoded in the characteristic polynomial, we recall the following two definitions that respectively match Definitions 2.8 and 2.9 of \cite{steklov1}.

\begin{defn} \label{def:exceptional}
We will say that an angle is \emph{rational} if it is a rational multiple of $\pi$.  Among the rational angles, those of the form $\frac{\pi}{k}$, where $k\in \Z$, will play an especially important role in what follows.     Angles of this form will be called \em odd\em, respectively \em even\em, angles 
if $k$ is an odd, respectively even, positive integer.   \emph{(}These angles are referred to as ``special,'' respectively ``exceptional,'' in \cite{klpps21}.\emph{)}  Observe that an angle $\alpha$ is odd if and only if $c(\alpha)=0$, 
while even angles $\alpha=\frac{\pi}{2m}$ satisfy $c(\alpha)=(-1)^m$.
Following \cite{klpps21}, we will refer to $(-1)^m$ as the \emph{parity} of the even angle $\frac{\pi}{2m}$.  Similarly, we will refer to $(-1)^j$ as the parity of the odd angle $\frac{\pi}{2j+1}$. 
\end{defn} 

\begin{defn}\cite[Definition 1.8]{klpps21} \label{def:generic} A curvilinear $n$-gon with all interior angles in $(0,\pi)$ is said to be \em admissible \em if the following two conditions hold:  (1) the side lengths $\ell_1, \ldots, \ell_n$ are incommensurable over $\{-1, 0, +1\}$ (that is, no non-trivial linear combination of $\ell_1, \ldots, \ell_n$ with coefficients taken from $\{-1, 0, 1\}$ vanishes);  and (2) none of the interior angles $\alpha_1, \ldots, \alpha_n$ are odd    (see Definition~\ref{def:exceptional}). Observe that 0-gons, i.e., simply-connected smoothly bounded domains, always satisfy the admissibility conditions.  Admissible $1$-gons have a single vertex with interior angle that is not odd, and necessarily have a curved edge.
\end{defn} 

The following result shows that the characteristic polynomial distinguishes between admissible and non-admissible curvilinear $n$-gons.

\begin{prop}\label{prop:vertices}\cite[p. 22]{klpps21}
For $n\geq 1$, a curvilinear $n$-gon $\Omega$ with all interior angles in $(0, \pi)$ is admissible if and only if its characteristic polynomial $P_\Om$ contains exactly $2^{n-1}$ linearly independent terms of the form $a\cos(ct)$ with $c\neq 0$.  Moreover, within the class of all admissible curvilinear polygons, the characteristic polynomial determines the number of vertices. 
\end{prop}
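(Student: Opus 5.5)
We need to prove Proposition \ref{prop:vertices}: for $n\ge1$, $\Om$ is admissible if and only if $P_\Om$ has exactly $2^{n-1}$ linearly independent terms $a\cos(ct)$ with $c\neq0$; moreover, among admissible polygons, $P_\Om$ determines $n$.

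The plan is to read off the structure of $P_\Om$ from Definition~\ref{def: char poly}. There are exactly $2^{n-1}$ classes $[\bxi]$, so $P_\Om$ is a sum of at most $2^{n-1}$ cosine terms plus a constant. After collecting terms with equal frequency, the number of distinct nonzero frequencies with nonzero total coefficient is the quantity in question. Since the functions $\cos(ct)$ for distinct $c\ge 0$ are linearly independent, this count is an invariant of $P_\Om$.

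\emph{Forward direction.} Assume $\Om$ is admissible.
\begin{enumerate}[(i)]
\item Incommensurability over $\{-1,0,1\}$ implies that the frequencies $|\bxi\cdot\bell|$ are pairwise distinct and nonzero across classes. Indeed, if $|\bxi\cdot\bell|=|\bxi'\cdot\bell|$ with $[\bxi]\ne[\bxi']$, then $(\bxi\mp\bxi')/2\in\{-1,0,1\}^n$ is nonzero and gives a vanishing combination. Likewise $\bxi\cdot\bell\ne0$.
\item No angle is odd, so every $c(\alpha_j)\neq0$, and hence every $a_{[\bxi]}\neq 0$.
\end{enumerate}
Therefore exactly $2^{n-1}$ independent nonzero-frequency terms appear.

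\emph{Converse.} Suppose $\Om$ is not admissible; we show fewer than $2^{n-1}$ terms survive.
\begin{enumerate}[(i)]
\item If some angle $\alpha_j$ is odd, then $c(\alpha_j)=0$. Every $\bxi$ with $\xi_j\ne\xi_{j+1}$ then has $a_{\bxi}=0$, and such classes exist when $n\ge1$. For $n=1$ the index is cyclic, so $\xi_1\ne\xi_2=\xi_1$ is impossible; I should check this case separately. There the only class is $\pm(1)$, with coefficient $1$ and frequency $\ell_1>0$. So a $1$-gon always has one term, and admissibility reduces to the angle condition, which the count does not see. This looks like a genuine subtlety. I expect the cited source intends $n\ge2$, or that for $n=1$ the odd angle is detected differently, via the constant term $-s(\alpha_1)=\mp1$ combined with the cosine. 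I will handle $n\ge2$ and note the $n=1$ caveat.
\item If instead there is a nontrivial relation $\sum \epsilon_j\ell_j=0$ with $\epsilon_j\in\{-1,0,1\}$, we build two classes giving the same or zero frequency. Take $\bxi$ with $\xi_j=\epsilon_j$ where $\epsilon_j\ne0$, and fill the remaining entries arbitrarily. Let $\bxi'$ agree with $\bxi$ on the zero set of $\epsilon$ and be negated on its support. Then $\bxi\cdot\bell=\bxi'\cdot\bell$.
   \begin{itemize}
   \item If the support is not all of $\{1,\dots,n\}$, then $\bxi\ne\pm\bxi'$, so two classes share a frequency.
   \item If the support is everything, then $\bxi\cdot\bell=0$, so that class contributes to the constant term rather than a nonzero frequency.
   \end{itemize}
   Either way the count drops below $2^{n-1}$ (merged coefficients may even cancel, which only lowers it further).
\end{enumerate}

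\emph{Determining $n$.} For admissible polygons the count equals $2^{n-1}$, so $n$ is recovered for $n\ge1$. A $0$-gon has one term by \eqref{eq:cp_smooth_domain}, the same as $n=1$. To separate these I would use the constant term: for $n=0$ it is $-1$, while for $n=1$ it is $-s(\alpha_1)$ with $\alpha_1\in(0,\pi)$, so $\pi^2/(2\alpha_1)>\pi/2$. Here $s(\alpha_1)=1$ only if $\pi^2/(2\alpha_1)=\pi/2+2\pi k$, i.e.\ $\alpha_1=\pi/(4k+1)$, which is an odd angle and hence excluded by admissibility. So the constant term distinguishes $n=0$ from $n=1$.

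The main obstacle is the collection and cancellation bookkeeping in the converse, together with the $n=1$ edge case.
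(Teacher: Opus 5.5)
Your argument is correct. For $n\ge 2$ it proves the equivalence, and your handling of the ``moreover'' part is also right: the count $2^{n-1}$ recovers $n\ge 1$, and the constant term $-s(\alpha_1)\neq -1$ separates admissible $1$-gons from $0$-gons. The paper gives no proof of its own here. It only cites \cite{klpps21} and adds the remark on the constant term, which your last step reproduces.

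Your $n=1$ caveat is not a gap in your proof. It is a genuine defect of the statement as written. Every $1$-gon has $P_\Om(t)=\cos(\ell_1 t)-s(\alpha_1)$, so it always has exactly $2^{0}=1$ nonzero-frequency cosine term.

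\begin{itemize}
\item A teardrop $1$-gon with vertex angle $\frac{\pi}{3}$ is non-admissible, yet $P_\Om(t)=\cos(\ell_1 t)+1$ has exactly one such term.
\item With vertex angle $\frac{\pi}{5}$, $P_\Om(t)=\cos(\ell_1 t)-1$, which is exactly the polynomial of a smooth domain.
\end{itemize}

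So the ``if'' direction must be restricted to $n\ge 2$. For $n=1$, admissibility is detected instead by $|s(\alpha_1)|<1$, i.e.\ by the constant term. The ``moreover'' part uses only the forward direction, so it is unaffected.
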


\begin{remark}
From private communication, we know that the authors of \cite{klpps21}   interpreted the notion of ``curvilinear $n$-gons'' to include the case $n=0$.  In particular, $0$-gons can be included in the final statement of Proposition \ref{prop:vertices}. Indeed, when comparing the case $n=0$ and $n=1$, note that the characteristic polynomials of 1-gons and 0-gons both have only one cosine frequency; however, the constant term in the characteristic polynomial distinguishes admissible 1-gons from 0-gons. 
\end{remark}

As a straightforward consequence of this proposition (see Cor. 2.11 of \cite{steklov1}), we deduce that 
the characteristic polynomial distinguishes admissible curvilinear $n$-gons from all non-admissible curvilinear polygons that have at most $n$ vertices and have all interior angles in $(0,\pi)$.  However, an admissible curvilinear $n$-gon may have the same characteristic polynomial as a non-admissible curvilinear polygon with more than $n$ vertices; see Lemma~\ref{om vs omred}.   

The following result of \cite{klpps21} shows that one can recover considerable geometric information from the characteristic polynomial of an admissible curvilinear $n$-gon.

\begin{thm}\cite[Theorem 1.17]{klpps21} \label{th:ss_ngons}
We use the notation and terminology of~\ref{nota:bell and bal}, \ref{def: char poly}, \ref{def:exceptional}, and \ref{def:generic}.   
Suppose that $\Omega$ and $\Omega'$ are admissible curvilinear $n$-gons that have the same characteristic polynomial.  Then
\begin{enumerate} 
\item[(a)] $\Omega$ and $\Omega'$ have the same number of even angles.
\item[(b)] If they have no even angles, then the boundary orientations and cyclical labeling of the edges and vertices can be chosen so that 
\[ \bell(\Omega)=\bell(\Omega') \]
and
\[ (c(\al_1), \dots, c(\al_n))=\pm (c(\al_1'),\dots, c(\al_n'))\]
for some choice of $\pm$.   
\item[(c)] If $\Om$ and $\Om'$ have one or more even angles, then we have 
\[(\ell_1,\dots, \ell_n)=(\ell'_{\sigma(1)},  \dots \ell'_{\sigma(n)})\]
and 
\[(|c(\al_1)|,\dots, |c(\al_n)|)= (|c(\al'_{\sigma(1)})|,\dots, |c(\al'_{\sigma(n)})|)\]
where $\sigma$ is one of a few explicitly given permutations.
\end{enumerate}
\end{thm}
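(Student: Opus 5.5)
The plan is to read off the geometry of $\Om$ from the cosine frequencies and coefficients of $P_\Om$, using the admissibility hypothesis to keep all frequencies apart. Normalize the perimeter to one as in Convention~\ref{conv:perim}. The frequencies of $P_\Om$ are the numbers $|\bxi\cdot\bell|$ for $[\bxi]\in\{\pm1\}^n/\sim$. By condition (1) of admissibility these $2^{n-1}$ numbers are pairwise distinct and nonzero: if $|\bxi\cdot\bell|=|\bxi'\cdot\bell|$ with $[\bxi]\neq[\bxi']$, then $(\bxi\mp\bxi')/2$ has entries in $\{-1,0,1\}$, is not identically zero, and annihilates $\bell$. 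The same holds for $\Om'$. Hence equality $P_\Om=P_{\Om'}$, together with linear independence of $\cos(ct)$ for distinct $c\ge0$, gives a bijection $\phi$ between equivalence classes $[\bxi]\mapsto[\bxi']$ with
\[
|\bxi\cdot\bell|=|\bxi'\cdot\bell'| \quad\text{and}\quad a_{[\bxi]}(\Om)=a_{[\bxi']}(\Om').
\]
The constant terms also agree, so $\prod s(\al_j)=\prod s(\al'_j)$. Since no angle is odd, every $c(\al_j)\neq0$, and therefore every coefficient $a_{[\bxi]}$ is nonzero. Thus the full frequency set is genuinely visible in $P_\Om$. (This is the content of Proposition~\ref{prop:vertices}, which I would take as given.)

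Next I would recover the edge lengths. The largest frequency is $1=\sum\ell_j$, attained at $\bxi=(1,\dots,1)$. The frequencies attained by flipping a single sign are $1-2\ell_j$. By incommensurability, any class with at least two sign flips gives a frequency that differs from each $1-2\ell_j$, but it cannot be separated from them by size alone, since $1-2\ell_1-2\ell_2$ could exceed $1-2\ell_3$. To handle this I would use the additive structure instead. The frequency set $F=\{|\bxi\cdot\bell|\}$ determines the multiset of values $\bxi\cdot\bell$ up to sign. I would aim to show that the linear span of $F$ over $\{-1,0,1\}$-relations recovers $\{\ell_j\}$ as a multiset. Concretely, the set $\{(1-f)/2 : f\in F\}$ consists of all subset sums $\sum_{j\in S}\ell_j$ with $|S|\le n/2$, up to complements. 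The $\ell_j$ are then the nonzero elements of this set that are not a sum of two others; incommensurability guarantees that this decomposition is unique. This gives $\{\ell_j\}=\{\ell'_j\}$ as multisets, and $\phi$ is induced by a permutation $\sigma$ of indices, possibly composed with $\bxi\mapsto-\bxi$.

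Then I would recover the cyclic order and the angles. For a class $[\bxi]$ with exactly two sign changes, occurring at vertices $j$ and $k$, the coefficient is $c(\al_j)c(\al_k)$. Classes with exactly two sign changes correspond to cyclic arcs of consecutive edges, and their coefficients are products of two nonzero numbers; classes with more sign changes give products of more factors. Which subsets of edges form arcs is not yet known from $\Om'$. The key point I would try to exploit is that, among arcs, the singleton arcs $\{j\}$ give coefficients $c(\al_{j-1})c(\al_j)$, and that the coefficient-valued function on subsets must be consistent under $\sigma$. I would separate the two cases of Theorem~\ref{th:ss_ngons}:
\begin{itemize}
\item If there are no even angles, I expect the values $|c|<1$ to allow recovering adjacency from the requirement that products over four or more sign changes factor consistently. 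This should yield the cyclic order up to reversal, and then $c(\al_j)c(\al_{j+1})=c(\al'_j)c(\al'_{j+1})$ for all $j$. The relation for two-change arcs $j$ to $k$ then gives $c(\al_j)c(\al_k)$ for all pairs, which determines the vector $(c(\al_j))$ up to a global sign.
\item If even angles occur, with $|c|=1$, the count of even angles is the number of factors of modulus one. It is recoverable from how many coefficients have modulus one, which gives part (a). The adjacency information degenerates, and only a few permutations remain, as in (c).
\end{itemize}

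The main obstacle is the combinatorial step: showing that the coefficient data force $\sigma$ to preserve cyclic adjacency, or in case (c) to be one of a few explicit permutations. I would first try small $n$ by hand, then attempt an induction that uses the coefficients of arcs of length two and the multiplicativity $a_{\bxi}a_{\bxi''}=a_{\bxi'}\cdot(\text{product})$ across arcs that share an endpoint.
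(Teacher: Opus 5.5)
There is a genuine gap, and it is exactly the step you label ``the main obstacle.'' (The paper does not prove this theorem itself; it quotes it from \cite{klpps21}.) Your first two steps reduce the problem to this: the edges of $\Om$ and $\Om'$ are matched by their pairwise distinct lengths, and every coefficient $a_{[\bxi]}$ is known as a function of the flipped edge set. Parts (b) and (c) are then entirely about how the two cyclic orders relate. For this you give only an expectation (``products over four or more sign changes factor consistently''), a plan to try small $n$ and induct, and, for (c), the bare assertion that ``only a few permutations remain.''

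The missing idea is a local adjacency test. For a set $S$ of edges, let $\bxi_S$ have entries $-1$ exactly on $S$, and write $a_S:=a_{[\bxi_S]}$. Then $a_{\{j\}}=c(\al_{j-1})c(\al_j)$, and for distinct edges $j,k$ (with $n\geq 3$)
\[ a_{\{j\}}\,a_{\{k\}}=a_{\{j,k\}}\ \text{ if $j,k$ share no vertex},\qquad a_{\{j\}}\,a_{\{k\}}=c(\al_v)^2\,a_{\{j,k\}}\ \text{ if they share the vertex $v$}. \]
All these coefficients are nonzero, since there are no odd angles. They sit at the frequencies $|1-2\ell_j|$ and $|1-2\ell_j-2\ell_k|$, which are computable from the recovered lengths. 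So the test is read off from $P_\Om$ alone and gives the same answers for $\Om$ and $\Om'$. Since $c(\al_v)^2=1$ exactly when $\al_v$ is even, the test detects adjacency across every non-even vertex and returns $c(\al_v)^2$ there.

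With no even angles, this gives the cyclic order up to reversal and all $c(\al_v)^2$. The values $c(\al_{j-1})c(\al_j)$ then fix the $c(\al_j)$ up to one global sign, which is (b). With $k\geq1$ even angles, the test is blind across even vertices. You only recover the maximal chains of edges between consecutive even vertices (for $k=1$ this still fixes the cyclic order). You must then determine which rearrangements and reversals of these chains, and which placements of the parities $\pm1$ at the even vertices, preserve every $a_S$. This is a real computation, not a formality. For instance, with exactly two even angles of the same parity, reversing one chain preserves all coefficients; for opposite parities it generally does not. That analysis is what produces the explicit permutations in (c).

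Two smaller points also need repair.

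\emph{Recovering the lengths.} Your criterion in Step 2, ``the $\ell_j$ are the nonzero elements not a sum of two others,'' is false. Incommensurability over $\{-1,0,1\}$ allows $\ell_3=2\ell_1+\ell_2$, and then $\ell_3=\ell_1+(\ell_1+\ell_2)$ is a sum of two other subset sums. Moreover, $\{(1-f)/2:f\in F\}$ is the set of subset sums below $\frac12$, not those with $|S|\le n/2$. A curved edge may be longer than $\frac12$, so some $\ell_j$ need not appear at all. Instead, note that $F\cup(-F)$ is the set of all values $\bxi\cdot\bell$, hence yields all $2^n$ subset sums, which are pairwise distinct. Peel greedily: the smallest subset sum not generated by the lengths already found is the next length. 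This recovers $\bell$ as a multiset.

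\emph{Part (a).} The number of classes with $|a_{[\bxi]}|=1$ is $2^{k-1}$ when there are $k\geq1$ even angles, but it is also $1$ when $k=0$. So it cannot tell $k=0$ from $k=1$. Use the constant term instead: under admissibility it is exactly $-\prod_j s(\al_j)$, and it vanishes precisely when $k\geq 1$.
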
  

Their result in the presence of even angles is actually considerably stronger than what we have stated in part (c) of the theorem above.   For a more complete statement, see \cite[Theorem 1.17]{klpps21}, also recalled in \cite[Theorem 2.13]{steklov1}.

\begin{remark}[\textbf{A practical guide to the characteristic polynomial}] \label{ss:cpguide}  Assume that two curvilinear $n$-gons with interior angles in $(0, \pi)$ have the same characteristic polynomial.  In each polynomial, collect the cosine terms that have the same frequency. By the linear independence of cosines with different frequencies, these frequencies must be the same for both $n$-gons.  Odd angles delete cosine terms from the characteristic polynomial, whereas even angles delete the product of sines.  Since a convex $n$-gon has angle sum $(n-2)\pi$ we deduce that, unless it is the equilateral triangle, it has at most two odd angles.  Moreover, with the exception of rectangles, a convex $n$-gon can have  at most three even angles. There are two ways that information can be lost if the edge lengths fail to satisfy the condition of incommensurability over $\{-1,0,1\}$:
\begin{itemize}
\item If there is a commensurability relationship with all coefficients in $\{-1,1\}$, then $\bxi\cdot \bell=0$ for some $\bxi$, and thus the term $a_{[\bxi]}\cos(|\bxi\cdot\bell|t)$ in the characteristic polynomial is absorbed into the constant term.
\item Other commensurability relationships result in having two or more terms of the form $a_{[\bxi]}\cos(|\bxi\cdot\bell|t)$ with the same cosine frequency.    In this case, cancellation may occur.
\end{itemize}
\end{remark}

We note the following elementary properties of $|c(\alpha)|$:
\begin{lemma}\cite[Lemma 2.16]{steklov1}\label{lem: |c|}~
Define $|c|:(0,\pi)\to [0,1]$ by $|c|(\alpha):=|c(\alpha)|$ where $c(\alpha)=\cos\left(\frac{\pi^2}{2\alpha}\right)$ as in Definition~\ref{def: char poly}.  Then:
\begin{enumerate}

\item[(a)]  $|c|^{-1}(\{0\})$ consists of all odd angles $\frac{\pi}{2k+1}$, $k\in \Z^+$.
\item[(b)] $|c|^{-1}(\{1\})$ consists of all even angles $\frac{\pi}{2k}$, $k\in \Z^+$.
\item[(c)] $|c|$ maps each interval $(\frac{\pi}{m+1},\frac \pi m)$, $m\in \Z^+$, bijectively onto $(0,1)$.     In particular, the restriction of $|c|$ to the set of all obtuse angles is injective.
\item[(d)] For $s\in [0,1]$, the inverse image $|c|^{-1}(\{s\})$ is discrete and accumulates only at $0$.
\end{enumerate}
\end{lemma}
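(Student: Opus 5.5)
The function $|c|(\alpha)=|\cos(\pi^2/(2\alpha))|$ is the composition of the map $\alpha\mapsto x:=\frac{\pi^2}{2\alpha}$ with $x\mapsto|\cos x|$, and I will read off each property from this composition. As $\alpha$ ranges over $(0,\pi)$, $x$ ranges over $(\pi/2,\infty)$, and the map $\alpha\mapsto x$ is a strictly decreasing homeomorphism. On $(\pi/2,\infty)$, $|\cos x|$ vanishes exactly at $x=\frac{(2k+1)\pi}{2}$ with $k\ge1$ (the value $k=0$ is excluded since $x>\pi/2$). It equals $1$ exactly at $x=k\pi$ with $k\ge1$, and between consecutive such points it is strictly monotone. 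Translating back through $\alpha=\frac{\pi^2}{2x}$:
\begin{itemize}
\item $x=\frac{(2k+1)\pi}{2}$ gives $\alpha=\frac{\pi}{2k+1}$, the odd angles, which proves (a);
\item $x=k\pi$ gives $\alpha=\frac{\pi}{2k}$, the even angles, which proves (b).
\end{itemize}

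For (c), note that $\alpha\in(\frac{\pi}{m+1},\frac{\pi}{m})$ if and only if $x\in(\frac{m\pi}{2},\frac{(m+1)\pi}{2})$. This is an open interval between a consecutive zero and maximum of $|\cos|$, in one order or the other depending on the parity of $m$. On any such interval, $|\cos|$ is a strictly monotone continuous map onto $(0,1)$. Composing with the decreasing homeomorphism $\alpha\mapsto x$ gives a bijection of $(\frac{\pi}{m+1},\frac{\pi}{m})$ onto $(0,1)$.

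The obtuse case is the one point needing a small extra check, since $(\pi/2,\pi)$ is the interval with $m=1$ and I want injectivity on the obtuse angles. For $\alpha\in(\pi/2,\pi)$ we have $x\in(\pi/2,\pi)$, where $|\cos x|=-\cos x$ is strictly increasing. So $|c|$ is injective there, which proves the last assertion of (c).

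For (d), fix $s\in[0,1]$. The preimage $\{x>\pi/2 : |\cos x|=s\}$ is a discrete set, containing at most two points in each interval $[k\pi,(k+1)\pi]$, and it escapes to infinity. Under the homeomorphism $x\mapsto\frac{\pi^2}{2x}$, a discrete set of $x$-values tending to $\infty$ maps to a set of $\alpha$-values that is discrete in $(0,\pi)$ and can accumulate only at $\alpha=0$.

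None of these steps presents a real obstacle; the argument is bookkeeping of the endpoints and of the monotonicity of $|\cos|$ between its critical values.
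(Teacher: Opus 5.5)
Your proof is correct. It is the natural argument: this paper does not reprove the lemma but quotes it from \cite{steklov1} as an elementary fact, and the substitution $x=\frac{\pi^2}{2\alpha}$, combined with the strict monotonicity of $|\cos|$ between consecutive multiples of $\frac{\pi}{2}$, is exactly what underlies it. Your separate check for obtuse angles is redundant, since it is just the case $m=1$ of the bijection in (c). In (d), what rules out accumulation at $\alpha=\pi$ is that the $x$-preimage has only finitely many points in each bounded interval, including near $x=\frac{\pi}{2}$, and your argument already supplies this.
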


\section{Triangles} \label{ss:triangles} 

We will see that at most finitely many triangles can share the same characteristic polynomial and that having the same characteristic polynomial implies congruence for large classes of triangles.  We remind the reader that determination by the characteristic polynomial will always mean determination up to congruence, and that all triangles will be assumed to be normalized to have perimeter one.

The characteristic polynomial of a triangle $T$ of perimeter one with angles $\alpha_1,\alpha_2,\alpha_3$ and edge lengths $\ell_1,\ell_2,\ell_3$ can be expressed as
\begin{eqnarray}
\label{eq:triangle_charpoly}
P_{T}(t) &=& \cos (t)\\  \nonumber
&+& \,c(\alpha_1)c(\alpha_3)\cos(|1-2\ell_1|t) \,+ \,c(\alpha_1)c(\alpha_2)\cos(|1-2\ell_2|t)
 \,+\,c(\alpha_2)c(\alpha_3)\cos(|1-2\ell_3|t)\\  \nonumber
&-&s(\alpha_1)s(\alpha_2)s(\alpha_3).\nonumber
\end{eqnarray}
Here we are using, for example, that $\ell_1-\ell_2+\ell_3=1-2\ell_2$.  From $P_{T}$, we can recover information about the lengths and angles of $T$.

\begin{remark} \label{rem:oneodd_tri}  If a triangle has one odd angle, say $\alpha_1$, then by \eqref{eq:triangle_charpoly}, its characteristic polynomial determines $\ell_3$, and thus $\ell_1+\ell_2$. Using the coefficient $c(\alpha_2)c(\alpha_3)$ together with the term $-s(\alpha_1)s(\alpha_2)s(\alpha_3) = \pm s(\alpha_2)s(\alpha_3)$ and angle addition/subtraction identities for the cosine, one can also recover the unordered set $\{|c(\alpha_2)|, |c(\alpha_3)|\}$.  Alternatively, this follows immediately from \cite[Lemma 5.3]{steklov1}, restated as Lemma \ref{om vs omred} in \S 6.
\end{remark}

\begin{thm}  \label{th:triangles123} ~Let $T$ be a triangle.
\begin{enumerate}  
\item[(a)] The characteristic polynomial of $T$ determines the number of odd angles in $T$.
    \item[(b)]  If $T$ has no odd angles, then it is determined by its characteristic polynomial within the set of all triangles.  In particular, every admissible triangle is uniquely determined by its characteristic polynomial.
    \item[(c)] If $T$ has precisely one odd angle, then there are at most four triangles with the same characteristic polynomial.  Moreover, the characteristic polynomial together with the measure of any one angle uniquely determine $T$ within the set of all triangles.
        \item[(d)] If $T$ has two odd angles, the characteristic polynomial uniquely determines the third angle and determines $T$ up to finitely many possibilities within the set of all triangles.
    \item[(e)] If $T$ has three odd angles, then it is equilateral and is uniquely determined among all triangles by its characteristic polynomial. 
\end{enumerate}
Thus any Steklov isospectral set of triangles contains at most finitely many elements. 
\end{thm}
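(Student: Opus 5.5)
Everything will be read off the explicit form \eqref{eq:triangle_charpoly}, normalized to perimeter one. Write $\beta_j=c(\alpha_{j-1})c(\alpha_{j+1})$ for the coefficient attached to the frequency $f_j=|1-2\ell_j|$. Since $\ell_j<\tfrac12$ by the triangle inequality, we have $f_j=1-2\ell_j\in(0,1)$. Thus each $f_j$ is nonzero, and it differs from the frequency $1$ of the leading term. Coincidences can only occur among the $f_j$ themselves, exactly when two edges are equal.

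\textbf{Part (a).} I will count nonzero cosine coefficients, grouping equal frequencies. An angle is odd iff $c(\alpha)=0$, by Lemma~\ref{lem: |c|}(a).
\begin{itemize}
\item With no odd angle, all three $\beta_j\neq 0$. We also have $s(\alpha_j)\neq 0$ unless $\alpha_j$ is even.
\item With exactly one odd angle $\alpha_i$, only the term with frequency $f_i$ survives.
\item With two or three odd angles, no nonconstant term other than $\cos t$ survives.
\end{itemize}
Cancellation is the danger when two edges are equal. If $\ell_1=\ell_3$, the merged coefficient is $c(\alpha_2)\bigl(c(\alpha_1)+c(\alpha_3)\bigr)$, and $\alpha_1=\alpha_3$ in an isosceles triangle, so it is $2c(\alpha_1)c(\alpha_2)\neq 0$. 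Hence the number of distinct surviving nonconstant frequencies other than $1$ is: at least two (or one, for equilateral) when there are no odd angles, and one when there is exactly one. The equilateral case has three odd angles anyway.

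I still need to separate two odd angles from three, and one odd angle from an equilateral-with-no-odd-angles situation, which cannot occur. For two versus three odd angles I will use the constant term. With three odd angles ($\pi/3$ each) it is $-s(\pi/3)^3=-(\sin\tfrac{3\pi}{2})^3=1$. With two odd angles $\pi/(2k+1)$, each $s=\pm1$, so the constant term is $\pm s(\alpha_3)$ with $\alpha_3\neq\pi/3$. I must check that this value cannot equal $1$ and lead to confusion. If it does, $\alpha_3$ is odd, forcing the equilateral triangle, which reconciles the cases. I expect this small case analysis (together with ruling out an equilateral-looking polynomial coming from a triangle with no odd angles) to be the fiddliest bookkeeping in (a).

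\textbf{Part (b).} With no odd angles, the three frequencies determine the multiset $\{\ell_1,\ell_2,\ell_3\}$. In the scalene case the coefficients $\beta_j$ are attached to specific edges. Then $\beta_1\beta_2\beta_3=\prod c(\alpha_j)^2$, and the ratios yield each $c(\alpha_j)^2$, hence the unordered $|c(\alpha_j)|$ matched to vertices. But the edge lengths alone already determine the triangle up to congruence by SSS, so (b) is immediate once (a) places the competitor in the no-odd-angle class. The isosceles case also follows from SSS: the merged frequency still reveals which length is repeated, because a surviving frequency with a merged coefficient versus separate frequencies distinguishes the cases. Is the competitor's frequency set forced to match? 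If the competitor is isosceles, one frequency appears; otherwise two or three. Since $f_j$ are defined from lengths directly, equal frequency sets give equal length multisets when counted. The subtle point is equilateral versus isosceles multiplicity, which I will handle by noting $\sum f_j=3-2=1$, which pins down the missing length.

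\textbf{Part (c).} By Remark~\ref{rem:oneodd_tri}, with one odd angle $\alpha_1$ the polynomial gives $\ell_3$ and the unordered pair $\{|c(\alpha_2)|,|c(\alpha_3)|\}$. Also $\alpha_1+\alpha_2+\alpha_3=\pi$, and $\alpha_1=\pi-\alpha_2-\alpha_3$ must be odd. A triangle with one side $\ell_3$ fixed and perimeter one has its opposite vertex on an ellipse with foci at the endpoints of $\ell_3$. The angle $\alpha_1$ opposite $\ell_3$ takes each value in its range at most at two positions up to reflection. Meanwhile $|c(\alpha_2)|$ is known up to swap, and by Lemma~\ref{lem: |c|}(c),(d) each candidate angle lies in a discrete set. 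I plan to parametrize the ellipse and show that the constraint forces at most two configurations for each assignment of the pair, giving four. Knowing one angle then fixes the position on the ellipse. This counting is the main obstacle.

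\textbf{Parts (d), (e), and finiteness.} For (d), the constant term $\pm s(\alpha_3)$ gives $\alpha_3$ through the sign and Lemma~\ref{lem: |c|}, and the two odd angles range over a set that is finite once $\alpha_3$ is fixed, since they sum to $\pi-\alpha_3$. For (e), the polynomial $\cos t+1$ forces three odd angles, hence $\pi/3$ each. Finiteness then follows from Theorem~\ref{poly is spec invar}.
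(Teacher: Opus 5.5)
Your arguments for (a), (b), (e) and the second sentence of (c) match the paper's: symmetry prevents cancellation for isosceles triangles, the constant term has modulus one only with three odd angles, perimeter one decides which length repeats, and the vertex lies on the ellipse with foci at the ends of $\ell_3$. One labeling inconsistency: your $\beta_j=c(\alpha_{j-1})c(\alpha_{j+1})$ assumes $\alpha_j$ is opposite $\ell_j$. Under Notational Conventions~\ref{nota:bell and bal} the coefficient is $c(\alpha_{j-1})c(\alpha_j)$, which is the convention you then use in (c).

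\textbf{First claim of (c).} There is a genuine gap here. The tool you propose cannot produce the count of four. Lemma~\ref{lem: |c|}(c),(d) only says $|c|^{-1}(\{s\})$ is discrete, and that set is infinite (one point in every interval $(\frac{\pi}{m+1},\frac{\pi}{m})$). So matching a parametrized ellipse against these sets gives no bound. The missing idea is a range restriction:
\begin{itemize}
\item The odd angle is at most $\frac{\pi}{3}$ and $T$ is not equilateral, so one of the other two angles, call it $\alpha$, exceeds $\frac{\pi}{3}$.
\item The same holds for any competitor $T'$, which by (a) also has exactly one odd angle and shares $\ell_3$ and $\ell_1+\ell_2$ with $T$.
\item On $(\frac{\pi}{3},\pi)$ the map $|c|$ is at most two-to-one: it is bijective onto $(0,1)$ on $(\frac{\pi}{3},\frac{\pi}{2})$ and on $(\frac{\pi}{2},\pi)$, and equals $1$ only at $\frac{\pi}{2}$.
\end{itemize}
Hence $|c(\alpha)|\in\{|c(\alpha_2)|,|c(\alpha_3)|\}$ leaves at most four values for $\alpha$, and each value fixes the triangle on the ellipse. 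Without this, the finiteness conclusion in the one-odd-angle case is also unproved.

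\textbf{Part (d).} The sign of the constant term cannot be used. With $\alpha_1=\frac{\pi}{2k+1}$ and $\alpha_2=\frac{\pi}{2j+1}$, the constant term is $-(-1)^{k+j}s(\alpha_3)$. The parity of $k+j$ is not encoded in the polynomial and can differ between competitors. So you learn only $|s(\alpha_3)|$, equivalently $|c(\alpha_3)|$, and Lemma~\ref{lem: |c|} alone again leaves infinitely many candidates. Three further steps are needed:
\begin{itemize}
\item $\alpha_1+\alpha_2<\frac{2\pi}{3}$ forces $\alpha_3\in(\frac{\pi}{3},\pi)$. This leaves two candidates, one in $(\frac{\pi}{3},\frac{\pi}{2}]$ and one in $[\frac{\pi}{2},\pi)$.
\item $\alpha_3$ is obtuse except for the triangle with angles $\frac{\pi}{3},\frac{\pi}{5},\frac{7\pi}{15}$.
\item The obtuse partner $\frac{7\pi}{13}$ of $\frac{7\pi}{15}$ is not the third angle of any triangle with two odd angles, since $\frac{6}{13}$ is not a sum of two reciprocals of odd integers.
\end{itemize}
Only then is the third angle uniquely determined. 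Your finiteness claim in (d) does survive once $\alpha_3\in(\frac{\pi}{3},\pi)$ is noted, since that already leaves at most two candidates.

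A minor point in (b): an isosceles competitor shows two distinct frequencies, not one. What actually settles that case is your appeal to $\sum_j f_j=1$.
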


\begin{proof}
(a)  Consider the three terms on the second line of Equation~\eqref{eq:triangle_charpoly}.  The triangle inequality implies that the cosine frequences $1-2\ell_j$, $j=1, 2, 3$ are never zero.   If $T$ has no odd angles, then all the coefficients are non-zero and the three cosine frequencies are either all different (scalene case) or exactly two coincide (isosceles case).   Here we are using the symmetry of isosceles triangles, which guarantees that the coefficients of the two like terms can't cancel.   Thus in either case, the three terms yield at least two independent cosine functions.   If $T$ has exactly one odd angle, then two of the three terms vanish, leaving exactly one cosine function.   If $T$ has two or more odd angles, then all three terms vanish.   Thus the number of independent cosine functions in $P_T$ determines whether $T$ has zero, one, or more than one odd angle.   To distinguish between two versus three odd angles, it's enough to look at the constant term $s(\alpha_1)s(\alpha_2)s(\alpha_3)$.   This term has magnitude one if and only if  $T$ has three odd angles.  Hence the characteristic polynomial determines the number of odd angles.

(b)  Assume $T$ has no odd angles.  If $T$ is a scalene triangle, the distinct cosine frequencies $1-2\ell_j$, $j=1,2,3$ in the characteristic polynomial immediately give us the three edge lengths.   If $T$ is isosceles, we can read off the two distinct edge lengths.   Using the fact that the perimeter is one, we can determine which of the two lengths occurs twice.  Thus in either case, the characteristic polynomial determines all three edge lengths and thus determines $T$.   

(c)     Let $T$ have exactly one odd angle $\alpha_1$.  As observed in Remark \ref{rem:oneodd_tri}, $P_T(t)$ determines $\ell_3$ and $\ell_1+\ell_2$.    
Let $E$ be the ellipse given by all points $p$ such that the sum of the distance from $p$ to the foci $\alpha_2$ and $\alpha_3$ is equal to $\ell_1+\ell_2$.    Then $\alpha_1$ must lie on $E$ as illustrated in Figure \ref{fig:ellipse}.  This information together with knowledge of the angle at a single vertex is enough to determine the triangle.   More precisely, if we can either determine the odd angle $\alpha_1$ or if we can determine one of $\alpha_2$ or $\alpha_3$ (unlabeled as long as we know it corresponds to one of $\alpha_2$ or $\alpha_3$), then we know $T$.   This proves the second statement of (c).

Since the odd angle is at most $\frac{\pi}{3}$ and $T$ is not equilateral, at least one of the other two angles -- call it $\alpha$ -- is strictly greater than $\frac{\pi}{3}$.  By Remark \ref{rem:oneodd_tri},
$P_T(t)$ determines the set $\{|c(\alpha_2)|, |c(\alpha_3)|\}$, so we know $|c(\alpha)|$ up to two possibilities.   By Lemma~\ref{lem: |c|}, we then know $\alpha$ up to at most four possibilities.   The first statement of (c) thus follows from the second.

(d) and (e) The only triangle with three odd angles is the equilateral triangle, so (e) follows from (a).   Now assume $T$ has exactly two odd angles $\alpha_1$ and $\alpha_2$.   We first prove that $P_T(t)$ determines $\alpha_3$.  Since $T$ is not equilateral, we have $\alpha_1+\alpha_2<\frac{2\pi}{3}$, so $\alpha_3>\frac{\pi}{3}$.  Moreover, $\alpha_3$ is obtuse unless $T$ is the triangle with angles $\frac{\pi}{3}, \frac{\pi}{5}, \frac{7\pi}{15}$.  Since $|s(\alpha)|=1$ when $\alpha$ is odd, the constant term in the characteristic polynomial is $|s(\alpha_3)|$.  Thus $P_T(t)$ determines $|s(\alpha_3)|$; equivalently, it determines $|c(\alpha_3)|$.   By Lemma~\ref{lem: |c|}, $|c|$ is injective on the set of obtuse angles.   Also $(\frac{\pi}{3},\pi)\,\cap \, |c|^{-1}\left(|c|(\frac{7\pi}{15})\right) =\{\frac{7\pi}{15}, \frac{7\pi}{13}\}$.   One can easily check that no triangle with two odd angles can have $\alpha_3=\frac{7\pi}{13}$.    Thus within the set of all triangles with two odd angles, the third angle is determined by the characteristic polynomial.  To prove finiteness, observe that given $\alpha_3$, there can be at most finitely many pairs $(\alpha_1,\alpha_2)$ of odd angles with $\alpha_1+\alpha_2=\pi -\alpha_3$.    
\end{proof}

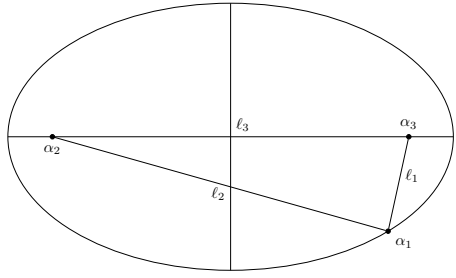
\begin{figure} \centering 
\resizebox{6cm}{!}{

\begin{tikzpicture}[dot/.style={draw,fill,circle,inner sep=1pt}]
  \def\a{5} 
  \def\b{3} 
  \def\angle{-45} 
  \draw (0,0) ellipse ({\a} and {\b});
  \draw (-\a,0) 
    -- (\a,0) ;
  \draw (0,-\b)
    -- (0,\b) ;
  \coordinate[label={above right:$\ell_3$}] (O) at (0,0);
  \node[dot,label={below:$\alpha_2$}] (F1) at ({-sqrt(\a*\a-\b*\b)},0) {};

  \node[dot,label={above:$\alpha_3$}] (F2) at ({+sqrt(\a*\a-\b*\b)},0) {};
  \node[dot,label={\angle:$\alpha_1$}] (X) at (\angle:{\a} and {\b}) {};
  \draw (F1) -- (X) (X) -- (F2);
    \coordinate[label={below left:$\ell_2$}] (L2) at (0,-1);
    \coordinate[label={below right:$\ell_1$}] (L1) at (3.8,-0.6);

\end{tikzpicture}}
\caption{Since the sum of the distances from the vertex $\alpha_1$ to the other two vertices is fixed and equal to $\ell_1 + \ell_2$, the vertex $\alpha_1$ must lie on the ellipse with foci at the vertices $\alpha_2$ and $\alpha_3$ as depicted here.} \label{fig:ellipse} 
\end{figure} 

If we restrict to the class of isosceles triangles, the characteristic polynomial suffices to uniquely determine each triangle.

\begin{thm} \label{th:iso_tri}
Within the class of isosceles triangles, each element is uniquely determined by its characteristic polynomial.
\end{thm}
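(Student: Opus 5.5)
The plan is to parametrize isosceles triangles concretely, write out the characteristic polynomial explicitly, and then run the case analysis by number of odd angles given by Theorem~\ref{th:triangles123}(a).

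With perimeter one (Convention~\ref{conv:perim}), write the legs as $\ell_2=\ell_3=a$ with $a\in(\tfrac14,\tfrac12)$ and the base as $\ell_1=1-2a$. Let the apex angle be $\alpha_2=\beta$ (between $\ell_2$ and $\ell_3$), and the base angles be $\alpha_1=\alpha_3=\gamma=\frac{\pi-\beta}{2}$. Substituting into \eqref{eq:triangle_charpoly} gives
\[
P_T(t)=\cos t+c(\gamma)^2\cos\bigl((4a-1)t\bigr)+2c(\gamma)c(\beta)\cos\bigl((1-2a)t\bigr)-s(\gamma)^2s(\beta).
\]
Here $4a-1>0$, and $4a-1=1-2a$ only in the equilateral case $a=\frac13$. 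By Theorem~\ref{th:triangles123}(a), the number of odd angles is determined by $P_T$, so we may compare only isosceles triangles with the same number of odd angles. Since the base angles are equal, an isosceles triangle with exactly one odd angle must have $\beta$ odd and $\gamma$ not odd. One with exactly two odd angles must have both base angles $\gamma$ odd and $\beta$ not odd. Three odd angles means equilateral.

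\textbf{No odd angles.} This case follows directly from Theorem~\ref{th:triangles123}(b), which gives uniqueness even among all triangles.

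\textbf{Exactly one odd angle} ($\beta$ odd). Then $c(\beta)=0$ while $c(\gamma)^2>0$, so $P_T(t)=\cos t+c(\gamma)^2\cos((4a-1)t)\mp s(\gamma)^2$. The cosine term has nonzero frequency $4a-1$. Reading off this frequency determines $a$, and hence the whole triangle, since an isosceles triangle of perimeter one is determined by its leg length.

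\textbf{Three odd angles.} The triangle is equilateral, and Theorem~\ref{th:triangles123}(e) applies.

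\textbf{Exactly two odd angles.} This is the case I expect to need a small actual argument. Here $\gamma=\frac{\pi}{2k+1}$ with $k\ge2$, so $\beta=\frac{(2k-1)\pi}{2k+1}$. All nonconstant terms except $\cos t$ vanish, and since $s(\gamma)^2=1$, the polynomial is $\cos t - s(\beta)$. Thus $s(\beta)$, with its sign, is the only remaining invariant, and I must show that $k\mapsto s(\beta)$ is injective for $k\ge 2$. I would compute
\[
\frac{\pi^2}{2\beta}=\frac{(2k+1)\pi}{2(2k-1)}=\frac{\pi}{2}+\frac{\pi}{2k-1},
\]
so that $s(\beta)=\cos\bigl(\frac{\pi}{2k-1}\bigr)$. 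This is strictly increasing in $k\ge2$: it takes the value $\tfrac12$ at $k=2$ and increases toward $1$. Hence $s(\beta)$ determines $k$, and therefore the triangle.

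Combining the four cases proves the theorem.
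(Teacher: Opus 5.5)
Your proof is correct and follows essentially the same route as the paper's. You split by the number of odd angles (Theorem~\ref{th:triangles123}(a)), cite Theorem~\ref{th:triangles123}(b),(e) for zero or three odd angles, and in the one-odd-angle case read the base length off the single surviving cosine frequency $4a-1=1-2\ell_1$, which the paper phrases via Remark~\ref{rem:oneodd_tri}. The only difference is the two-odd-angle case: the paper cites Theorem~\ref{th:triangles123}(d), while you check directly that the constant term $-s(\beta)=-\cos(\pi/(2k-1))$ is strictly monotone in $k\ge 2$, a self-contained verification of the same fact.
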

\begin{proof}
Let $T$ be an isosceles triangle.  By Theorem \ref{th:triangles123}, the number of odd angles in $T$ is known; moreover, if there are three odd angles, then $T$ is the equilateral triangle of perimeter one. 

Assume that $T$ has precisely two odd angles.  Theorem~\ref{th:triangles123}(d) shows that the third angle is uniquely determined by the characteristic polynomial. Thus the remaining two equal angles are also known, and since we know that the triangle has perimeter one, T is determined.
 
If $T$ has precisely one odd angle, then it is the vertex angle at which the two equal sides meet. Denoting this angle by $\alpha_1$, Remark~\ref{rem:oneodd_tri} implies that $P_T(t)$ determines $\ell_1 + \ell_2$ and $\ell_3$.  But $\ell_1=\ell_2$, so all three edge lengths are determined and thus so is $T$.  

Finally, if there are no odd angles, Theorem \ref{th:triangles123} implies that $T$ is determined.
\end{proof}  

\begin{remark} \label{rem:iso_triangles} 
There are limited possibilities for a scalene triangle to have the same characteristic polynomial as an isosceles triangle.  An isosceles triangle that is not equilateral and has at least one odd angle must have precisely one or two odd angles, and all angles must be rational.  The characteristic polynomial is however not enough to distinguish isosceles triangles from non-isosceles.  For example, the triangle with angles $\frac{\pi}{3}, \frac{\pi}{15}, \frac{3\pi}{5}$ and the isosceles triangle with angles $\frac{\pi}{5}, \frac{\pi}{5}, \frac{3\pi}{5}$ have the same characteristic polynomial.  So to uniquely determine isosceles triangles among all triangles based on their Steklov spectrum, one would need to use more than the characteristic polynomial. This example also shows that the finiteness statement of Theorem \ref{th:triangles123}(d) cannot be improved to uniqueness.
\end{remark}

In the broad class of all non-obtuse triangles, each triangle is uniquely determined by its characteristic polynomial.

\begin{thm} \label{th:non_obtuse}
Within the class of all non-obtuse triangular domains, each domain is uniquely determined by its characteristic polynomial.
\end{thm}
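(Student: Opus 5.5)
The plan is to reduce to the case analysis of Theorem~\ref{th:triangles123}, using non-obtuseness to remove the ambiguities left there. Let $T$ and $T'$ be non-obtuse triangles of perimeter one with $P_T=P_{T'}$. By Theorem~\ref{th:triangles123}(a) they have the same number of odd angles. If that number is $0$, part (b) gives $T\cong T'$. If it is $3$, part (e) gives $T\cong T'$.

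\emph{Two odd angles.} This case is handled by a direct enumeration. Let the odd angles be $\frac{\pi}{2j+1}\ge\frac{\pi}{2k+1}$ and let the third angle be $\alpha_3\in(\frac{\pi}{3},\frac{\pi}{2}]$. Non-obtuseness forces the two odd angles to sum to at least $\frac{\pi}{2}$. The pair $(\frac{\pi}{3},\frac{\pi}{3})$ gives the equilateral triangle, which has three odd angles and is excluded. The pair $(\frac{\pi}{3},\frac{\pi}{5})$ is allowed and gives $\alpha_3=\frac{7\pi}{15}$. Every other pair sums to less than $\frac{\pi}{2}$: for instance $\frac{\pi}{3}+\frac{\pi}{7}=\frac{10\pi}{21}$ and $\frac{2\pi}{5}$. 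So the triangle with angles $\frac{\pi}{3},\frac{\pi}{5},\frac{7\pi}{15}$ is the only non-obtuse triangle with exactly two odd angles, and uniqueness in this case is immediate.

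\emph{Exactly one odd angle $\alpha_1$.} This is the main case. By Remark~\ref{rem:oneodd_tri}, $P_T$ determines $\ell_3$, $\ell_1+\ell_2$, and the unordered set $\{|c(\alpha_2)|,|c(\alpha_3)|\}$. By Theorem~\ref{th:triangles123}(c) it then suffices to determine one of $\alpha_2,\alpha_3$.
\begin{enumerate}
\item Since $\alpha_1\le\frac{\pi}{3}$, $T$ is not equilateral and $\alpha_2,\alpha_3\le\frac{\pi}{2}$, at least one of $\alpha_2,\alpha_3$ lies in $(\frac{\pi}{3},\frac{\pi}{2}]$.
\item By Lemma~\ref{lem: |c|}(b),(c), the function $|c|$ is injective on $(\frac{\pi}{3},\frac{\pi}{2}]$. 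If both $\alpha_2,\alpha_3$ lie in this interval, the set $\{|c(\alpha_2)|,|c(\alpha_3)|\}$ therefore determines both angles, and we are done.
\item Otherwise, say $\alpha_3\le\frac{\pi}{3}<\alpha_2\le\frac{\pi}{2}$. Then $\alpha_1+\alpha_3\ge\frac{\pi}{2}$ with $\alpha_1$ odd. This forces either $\alpha_1=\frac{\pi}{3}$, or $\alpha_1=\frac{\pi}{5}$ with $\alpha_3\ge\frac{3\pi}{10}$. Hence $\alpha_3$ lies in a short explicit interval, roughly $[\frac{\pi}{6},\frac{\pi}{3}]$ or $[\frac{3\pi}{10},\frac{\pi}{3}]$, on which $|c|$ is again explicitly invertible: the interval is contained in $[\frac{\pi}{6},\frac{\pi}{5}]\cup[\frac{\pi}{5},\frac{\pi}{4}]\cup[\frac{\pi}{4},\frac{\pi}{3}]$.
\end{enumerate}
So each of the two values in the known set yields at most a few candidate angles, and hence at most a few candidate triangles.

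\emph{Main obstacle: ruling out the candidates.} This means excluding a second non-obtuse triangle $T'$ with the same $\ell_3$ and $\ell_1+\ell_2$ but with the two $|c|$ values assigned the other way, or with a different odd angle $\alpha_1'$. For this I would use the law of cosines,
\[
\ell_3^2=(\ell_1+\ell_2)^2-2\ell_1\ell_2(1+\cos\alpha_1).
\]
This shows that the odd angle $\alpha_1$ alone, together with the known quantities, fixes $\ell_1\ell_2$ and hence $T$. It therefore suffices to show that $\alpha_1\in\{\frac{\pi}{3},\frac{\pi}{5},\frac{\pi}{7},\dots\}$ is determined. I would argue geometrically on the ellipse of Figure~\ref{fig:ellipse}: the apex angle $\alpha_1$ is maximal at the isosceles position and decreases monotonically as the apex moves toward the major axis. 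Non-obtuseness confines the apex to the arc between the two positions where $\alpha_2$ or $\alpha_3$ equals $\frac{\pi}{2}$. I would then check that along this arc $\alpha_1$ cannot take two distinct values of the form $\frac{\pi}{2k+1}$, using the case analysis above: only $\alpha_1\in\{\frac{\pi}{3},\frac{\pi}{5}\}$ can occur when one base angle is at most $\frac{\pi}{3}$. Any leftover coincidence would be settled by comparing the explicit $|c|$ values, together with the sign of the coefficient $c(\alpha_2)c(\alpha_3)$ in $P_T$.
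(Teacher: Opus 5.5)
\textbf{The decisive case is not proved.} Your handling of zero, two, or three odd angles is correct. Your reduction of the one-odd-angle case is also sound, once step 2 is phrased for the pair: $T'$ has an angle in $(\frac{\pi}{3},\frac{\pi}{2}]$ whose $|c|$-value lies in $\{|c(\alpha_2)|,|c(\alpha_3)|\}$, so by injectivity it equals $\alpha_2$ or $\alpha_3$, and Theorem~\ref{th:triangles123}(c) applies. By symmetry you may then assume both $T$ and $T'$ have a base angle below $\frac{\pi}{3}$, whence $\alpha_1,\alpha_1'\in\{\frac{\pi}{3},\frac{\pi}{5}\}$.

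What remains is to exclude $\alpha_1=\frac{\pi}{5}$, $\alpha_1'=\frac{\pi}{3}$ for two non-obtuse triangles with the same $\ell_3$ and $\ell_1+\ell_2$. Your proposal does not do this. The sentence ``I would then check that along this arc $\alpha_1$ cannot take two distinct values of the form $\frac{\pi}{2k+1}$'' restates the claim rather than arguing it. Monotonicity of the apex angle along the ellipse only says that $\alpha_1$ ranges over an interval. Whether $\frac{\pi}{3}$ and $\frac{\pi}{5}$ can both lie in that interval depends on where the endpoints of the non-obtuse arc sit, and you never compute them. Likewise, ``any leftover coincidence would be settled by comparing the explicit $|c|$ values\ldots'' is a placeholder, not an argument.

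\textbf{How to close it.} Your ellipse route can be finished quickly. By the Law of Sines (as in Lemma~\ref{lem.sine property}), $e:=\frac{\ell_3}{\ell_1+\ell_2}=\frac{\sin(\alpha_1/2)}{\cos((\alpha_2-\alpha_3)/2)}$. Non-obtuseness gives $|\alpha_2-\alpha_3|\le\alpha_1$, hence $\sin(\alpha_1/2)\le e\le\tan(\alpha_1/2)$; these two bounds are exactly the isosceles and right-angle endpoints of your arc. Since $e=\ell_3/(1-\ell_3)$ is fixed by $P_T$, $\alpha_1=\frac{\pi}{3}$ forces $e\ge\frac12$, while $\alpha_1'=\frac{\pi}{5}$ forces $e\le\tan\frac{\pi}{10}<0.33$, a contradiction.

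That would be a genuinely different and shorter argument than the paper's. The paper treats this case by a sign analysis of $c(\alpha_j)$ and $s(\alpha_j)$. It uses the opposite parities of $\frac{\pi}{5}$ and $\frac{\pi}{3}$ to get $s(\alpha_2')s(\alpha_3')=-s(\alpha_2)s(\alpha_3)$ from the constant term, splits according to whether an even angle is present, and reaches angle-sum contradictions.

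Your $|c|$-and-sign fallback also works if you actually carry it out. In $T$ one has $c(\alpha_2)<0<c(\alpha_3)$, with $|c(\alpha_3)|\le\frac12$ and $|c(\alpha_2)|>\cos\frac{\pi}{14}$. In $T'$ one has $c(\alpha_2')<0$, hence $c(\alpha_3')>0$. Matching $|c(\alpha_2')|=|c(\alpha_2)|$ gives $\alpha_2'=\alpha_2$ and congruence. The crosswise matching instead forces $\alpha_2'\le\frac{3\pi}{8}$ and $\alpha_3'<\frac{7\pi}{27}$, so the angles of $T'$ sum to less than $\pi$.
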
 
 
\begin{proof}
By Theorem~\ref{th:triangles123}, the characteristic polynomial $P_T(t)$ determines the number of odd angles in a triangular domain $T$; moreover, $T$ is uniquely determined if it contains no or three odd angles.  If a non-obtuse triangle $T$ contains exactly two odd angles, they are $\frac{\pi}{3}$ and $\frac{\pi}{5}$, and $T$ is uniquely determined.

It remains to show that if two non-obtuse triangles $T$ and $T'$ each have precisely one odd angle, then $T \cong T'$.  Denote the angles of $T$ and $T'$ by $\alpha_i$ and $\alpha'_i$, respectively. Let $\alpha_1$, respectively $\alpha_1'$, be the odd angle in $T$, respectively $T'$.   
We make three key observations:
\begin{enumerate}
\item If $\alpha_1=\alpha_1'$, then $T \cong T'$ by Theorem~\ref{th:triangles123}(c).  Thus we may assume $\alpha_1\neq\alpha_1'$.   
\item Both triangles must have at least one angle in $(\frac{\pi}{3},\frac{\pi}{2}]$.  We relabel the angles, so $\alpha_2, \alpha_2'\in \left(\frac{\pi}{3},\frac{\pi}{2}\right].$   

\item If at least one of $\alpha_3$ or $\alpha_3'$ also lies in $(\frac{\pi}{3},\frac{\pi}{2}]$, then $T \cong T'$.    Indeed, suppose $\alpha_3\in (\frac{\pi}{3},\frac{\pi}{2}]$.  By Remark \ref{rem:oneodd_tri}, 
$P_T(t)$ determines the set $\{|c(\alpha_2)|, |c(\alpha_3)|\}$.  By Lemma~\ref{lem: |c|}, $|c|$ maps $(\frac{\pi}{3},\frac{\pi}{2}]$ bijectively onto $(0,1]$, so $\alpha_2'$ equals one of $\alpha_2$ or $\alpha_3$.   We can again conclude by Theorem~\ref{th:triangles123}(c) that $T \cong T'$.  Hence we assume that $\alpha_3, \alpha_3'<\frac{\pi}{3}$.
\end{enumerate}  

We conclude that $\alpha_2+\alpha_3<\frac{\pi}{2}+\frac{\pi}{3}$ and hence the odd angle $\alpha_1$ is one of $\frac{\pi}{3}$ or $\frac{\pi}{5}$.  The same holds for $\alpha_1'$.  After possibly interchanging the roles of $T$ and $T'$ if needed, we have $\alpha_1=\frac{\pi}{5}$ and $\alpha_1'=\frac{\pi}{3}.$

Consider the angles of $T$. Since  
$$\frac{\pi}{3}>\alpha_3=\pi-\frac{\pi}{5}-\alpha_2 \geq \frac{3\pi}{10},$$
we have
$\alpha_3\in \left[ \frac{3\pi}{10}, \frac{\pi}{3}\right)$, and thus $\frac{\pi^2}{2\alpha_3} \in (\frac{3\pi}{2}, \frac{5\pi}{3}]$. Consequently, $c(\alpha_3) > 0$ and $s(\alpha_3) < 0$.
We also have $\alpha_1+\alpha_3< \frac{8\pi}{15}$, so 
$\alpha_2\in \left(\frac{7\pi}{15}, \frac{\pi}{2}\right]$, and thus $\frac{\pi^2}{2\alpha_2} \in [\pi, \frac{15\pi}{14})$.  Consequently, $c(\alpha_2) < 0$ and $s(\alpha_2) \leq 0$.  So $c(\alpha_3)c(\alpha_2) < 0$ and $s(\alpha_2) s(\alpha_3) \geq 0$. 

For $T'$, a priori we just have $\alpha'_1=\frac{\pi}{3}$, $\alpha'_2\in \left(\frac{\pi}{3}, \frac{\pi}{2}\right]$, and $\alpha'_3\in\left[\frac{\pi}{6}, \frac{\pi}{3}\right)$.  Thus $\frac{\pi^2}{2\alpha_2'} \in [\pi, \frac{3\pi}{2})$.  Hence $c(\alpha_2') < 0$ and $s(\alpha_2') \leq 0$.  By the equality of the characteristic polynomials for $T$ and $T'$, we see that $c(\alpha'_2)c(\alpha'_3)=c(\alpha_2)c(\alpha_3)<0$.  Since $c(\alpha_2') <0$, we must have $c(\alpha_3')>0.$  Thus $\frac{\pi^2}{2\alpha_3'}$ must lie in the right half plane.  Since the odd angles $\alpha_1$ and $\alpha_1'$ have opposite parity, we must have 
$$s(\alpha_2') s(\alpha_3')=-s(\alpha_2)s(\alpha_3)\leq 0.$$   
There are two mutually exclusive possibilities:
\begin{enumerate}
    \item  These products of sines are zero and both triangles have an even angle.
    \item Neither triangle has an even angle.  Since $s(\alpha_2') \leq 0$, we must have $s(\alpha_3') > 0$, so $\frac{\pi^2}{2\alpha_3'}$ must lie in the first quadrant and thus $\frac{\pi^2}{2\alpha_3'}\in \left(2\pi,\frac{5\pi}{2}\right).$   
\end{enumerate}

 We first consider the case that neither triangle has an even angle.  If $|c(\alpha_2)|=|c(\alpha'_2)|$, then $\alpha_2=\alpha_2'$ due to Lemma~\ref{lem: |c|}(c) applied to the interval $(\frac{\pi}{3},\frac{\pi}{2})$.  In this case, $T \cong T'$ by Theorem~\ref{th:triangles123}(c).  Otherwise, we must have that $|c(\alpha_2)|=|c(\alpha'_3)|$, so 
$\frac{\pi^2}{2\alpha_3'}\in \left(2\pi,\frac{29\pi}{14}\right).$
Since $c(\alpha_2')=-c(\alpha_3)$, we have 
$\frac{\pi^2}{2\alpha'_2}\in \left[\frac{4\pi}{3},\frac{3\pi}{2}\right).$
This yields 
$\alpha'_2\in \left(\frac{\pi}{3},\frac{3\pi}{8}\right]\,\,\mbox{and\,\,}\alpha'_3\in \left(\frac{7\pi}{29},\frac{\pi}{4}\right).$
Thus 
$$\alpha'_1+\alpha'_2+\alpha'_3 < \frac{\pi}{3}+\frac{3\pi}{8}+\frac{\pi}{4}=\frac{23\pi}{24}<\pi,$$
a contradiction.

Next suppose that both $T$ and $T'$ have an even angle.  Since $\alpha_1=\frac{\pi}{5}$ and $T$ is non-obtuse, the even angle in $T$ must be a right angle, so $T$ has angles $\alpha_1=\frac{\pi}{5}, \alpha_2=\frac{\pi}{2}, \alpha_3=\frac{3\pi}{10}$.     
By Theorem~\ref{th:triangles123}(c), we may assume that $T'$ is not a right triangle.   Since $\alpha_1'=\frac{\pi}{3}$ and $T'$ is an acute triangle, the even angle in $T'$ must be $\frac{\pi}{4}$, so $T'$ has angles $\alpha'_1=\frac{\pi}{3}, \alpha_2'=\frac{\pi}{4}, \alpha_3'=\frac{5\pi}{12}.$     
  We have $c(\alpha_2')=-c(\alpha_2)$. 
However, $|c(\alpha_3')|\neq |c(\alpha_3)|$, which contradicts the fact that $T$ and $T'$ have the same characteristic polynomial.  
\end{proof}

Theorem~\ref{th:non_obtuse} states that non-obtuse triangles are mutually distinguishable by their characteristic polynomials.   In Proposition \ref{prop.acute among all} we will show that, with the possible exception of acute triangles that have an angle of measure $\pi/j$ for $j=3,5,7$, every non-obtuse triangle is in fact uniquely determined by its characteristic polynomial within the set of \emph{all} triangles.  For this proof, we require the following two lemmas.

\begin{lemma}\label{lem.sine property}
If $T$ has exactly one odd angle, say $\alpha_1$, then the characteristic polynomial of $T$ determines the quantity
$$\frac{\sin(\alpha_2)+\sin(\alpha_3)}{\sin(\alpha_1)}. $$
    \end{lemma}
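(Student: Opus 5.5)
By the law of sines, the quantity in question is a ratio of edge lengths:
$$\frac{\sin(\alpha_2)+\sin(\alpha_3)}{\sin(\alpha_1)}=\frac{\ell_1+\ell_2}{\ell_3}.$$
This uses the labeling of Notational Conventions~\ref{nota:bell and bal}, under which the odd vertex $\alpha_1$ lies between edges $\ell_1$ and $\ell_2$. The opposite edge is therefore $\ell_3$, and $\alpha_2,\alpha_3$ are opposite $\ell_1,\ell_2$ respectively. So the plan is to show that $P_T$ determines $\ell_3$ and $\ell_1+\ell_2$, and then take the quotient.

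\textbf{Step 1.} Since $\alpha_1$ is odd, $c(\alpha_1)=0$. In \eqref{eq:triangle_charpoly} this kills the terms with coefficients $c(\alpha_1)c(\alpha_3)$ and $c(\alpha_1)c(\alpha_2)$, leaving
$$P_T(t)=\cos(t)+c(\alpha_2)c(\alpha_3)\cos(|1-2\ell_3|t)\mp s(\alpha_2)s(\alpha_3).$$

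\textbf{Step 2.} The triangle inequality gives $\ell_3<\tfrac12$, so $0<1-2\ell_3<1$. The frequency $1-2\ell_3$ is therefore distinct from the perimeter frequency $1$, and by linear independence of cosines it can be read off from $P_T$, provided its coefficient $c(\alpha_2)c(\alpha_3)$ is nonzero.

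\textbf{Step 3.} To justify nonvanishing, note that $\alpha_2,\alpha_3$ are not odd, since $T$ has exactly one odd angle; hence $c(\alpha_2)c(\alpha_3)\neq 0$ by Lemma~\ref{lem: |c|}(a). This is exactly the content of Remark~\ref{rem:oneodd_tri}, which I would simply invoke. The one point needing care is that the intended labeling matches the remark, that is, the edge opposite the odd vertex is $\ell_3$; this is immediate from the cyclic convention.

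\textbf{Step 4.} With $\ell_3$ known and the perimeter normalized to one (Convention~\ref{conv:perim}), we get $\ell_1+\ell_2=1-\ell_3$. The law of sines then yields
$$\frac{\sin\alpha_2+\sin\alpha_3}{\sin\alpha_1}=\frac{1-\ell_3}{\ell_3}.$$
This is scale-invariant, so the statement holds without the perimeter normalization as well. I expect no real obstacle here.
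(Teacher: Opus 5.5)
Your proof is correct and is essentially the paper's argument: the Law of Sines turns the quantity into $(\ell_1+\ell_2)/\ell_3$, and Remark~\ref{rem:oneodd_tri} shows that $P_T$ determines $\ell_3$ and $\ell_1+\ell_2=1-\ell_3$. You spell out that remark by isolating the surviving term $c(\alpha_2)c(\alpha_3)\cos(|1-2\ell_3|t)$ and checking that its coefficient and frequency are nonzero.
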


\begin{proof} 
By the Law of Sines together with the convention for the order of enumerating angles and side lengths, 
\[ \frac{\sin(\alpha_1)}{\ell_3} = \frac{\sin(\alpha_2)}{\ell_1} = \frac{ \sin(\alpha_3)}{\ell_2} \implies \frac{\sin(\alpha_2)+\sin(\alpha_3)}{\sin(\alpha_1)}  =  \frac{\ell_1+\ell_2}{\ell_3}.\]
By Remark \ref{rem:oneodd_tri}, $P_T(t)$ determines $\ell_3$ and $\ell_1+\ell_2$.
\end{proof}

\begin{lemma}\label{lem.angle dec}
Let $T$ and $T'$ be  triangles each having exactly one odd angle $\alpha_1$, respectively $\alpha_1'$, and with the same characteristic polynomial. Assume that $$\max\{\alpha_2',\alpha_3'\}>\max\{\alpha_2,\alpha_3\}.$$
Then 
\beq \min\{\alpha_2',\alpha_3'\}<\min\{\alpha_2,\alpha_3\} \textrm{ and } \alpha_1'<\alpha_1. \label{eq:lem.angle dec} \eeq 
\end{lemma}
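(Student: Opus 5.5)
The approach is to reduce everything to a one-parameter family of triangles sharing a base and a perimeter, and to check that all three angles are strictly monotone in that parameter. By Remark~\ref{rem:oneodd_tri}, the characteristic polynomial determines the length of the edge opposite the odd angle and the sum of the other two edge lengths. Hence $T$ and $T'$ share these two numbers; call them $b:=\ell_3=\ell_3'$ and $s:=\ell_1+\ell_2=\ell_1'+\ell_2'$, with $s>b$ by the triangle inequality. As in the proof of Theorem~\ref{th:triangles123}(c), both triangles then have the same base, with the apex on the ellipse of Figure~\ref{fig:ellipse}. Up to congruence, such a triangle is determined by the asymmetry parameter $u:=|\ell_1-\ell_2|\in[0,b)$. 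The larger side is $(s+u)/2$ and the smaller is $(s-u)/2$.

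The key step is to show that, as $u$ increases, $\alpha_1$ strictly decreases, $\min\{\alpha_2,\alpha_3\}$ strictly decreases, and $\max\{\alpha_2,\alpha_3\}$ strictly increases. Each follows from the law of cosines.

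\emph{Apex angle.} Write $d_1=(s+u)/2$ and $d_2=(s-u)/2$. From $b^2=d_1^2+d_2^2-2d_1d_2\cos\alpha_1=s^2-2d_1d_2(1+\cos\alpha_1)$ we get
\[ 1+\cos\alpha_1=\frac{s^2-b^2}{2d_1d_2}=\frac{2(s^2-b^2)}{s^2-u^2}. \]
This is strictly increasing in $u$, so $\alpha_1$ strictly decreases in $u$.

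\emph{Smaller base angle.} It is the angle $\beta$ opposite the shorter side $d_2$, lying between the base and the side $d_1$. Using $d_1^2-d_2^2=su$,
\[ \cos\beta=\frac{b^2+d_1^2-d_2^2}{2bd_1}=\frac{b^2+su}{b(s+u)}. \]
The $u$-derivative of this expression has numerator $b\bigl(s(s+u)-(b^2+su)\bigr)=b(s^2-b^2)>0$. So $\cos\beta$ increases and $\beta=\min\{\alpha_2,\alpha_3\}$ strictly decreases.

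\emph{Larger base angle.} Since $\max\{\alpha_2,\alpha_3\}=\pi-\alpha_1-\min\{\alpha_2,\alpha_3\}$, it is strictly increasing in $u$.

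To conclude, let $u$ and $u'$ be the parameters of $T$ and $T'$. Because $\max\{\alpha_2,\alpha_3\}$ is a strictly increasing function of $u$, the hypothesis $\max\{\alpha_2',\alpha_3'\}>\max\{\alpha_2,\alpha_3\}$ forces $u'>u$. The other two monotonicity statements then give both inequalities in \eqref{eq:lem.angle dec}.

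The only delicate point is checking the sign of the derivative in the $\cos\beta$ computation. That sign reduces exactly to the condition $s>b$, which is the triangle inequality.
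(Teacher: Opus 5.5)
Your proof is correct and follows the paper's route: both use Remark~\ref{rem:oneodd_tri} to fix the base $\ell_3$ and the sum $\ell_1+\ell_2$, and both treat the triangles as a one-parameter family with apex on the ellipse $E$. Your parameter $u=|\ell_1-\ell_2|$ is just a constant multiple of the $x$-coordinate of the apex, and both arguments conclude from strict monotonicity of the three angles along this family. The only difference is that you prove the monotonicity and its direction by explicit law-of-cosines computations, whereas the paper deduces monotonicity from the fact that a single angle determines the triangle up to congruence and asserts the directions directly, so your version is, if anything, more complete on that point.
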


\begin{proof}
As observed in the proof of Theorem \ref{th:triangles123}, there exists an ellipse $E$ such that the vertices $\alpha_1$ and $\alpha_1'$ of $T$ and $T'$ lie on $E$, and the other two vertices of each of these triangles are at the foci of $E$.   We may assume that $E$ is centered at the origin with foci on the $x$-axis, so $E$ is given by $\frac{x^2}{a^2}+\frac{y^2}{b^2}=1$ with $a,b>0$.   Let $(\pm c,0)$ denote the foci, where $c>0$.

Consider all triangles that have one vertex on $E$ and the other vertices at the foci.  Since we only care about the congruence classes of the triangles, we can assume that the vertex $v$ on $E$ lies in the first quadrant or at $(0,b)$.  Let $v$ move continuously along $E$ starting at $(0,b)$ and heading towards  $(a,0)$. As observed in the proof of Theorem \ref{th:triangles123}, the knowledge of a single angle uniquely determines the triangle up to congruence.  Consequently, as $v$ moves within a quadrant, each of the angles is a monotone function.  Denote by $T(v)$ the triangle formed by $v$ and the foci as depicted in Figure \ref{fig:T_of_v}.    Without loss of generality, let $\alpha_2$ be the angle at $(-c,0)$ and $\alpha_3$ be the angle at $(c,0)$.  Then, when $v$ is at $(0,b)$, the triangle is isosceles, and $\alpha_2=\alpha_3$.  

As $v$ moves towards $(a,0)$, both $\alpha_1$ and $\alpha_2$ decrease monotonically while $\alpha_3$ increases monotonically. 
Consequently, $\max \{ \alpha_2, \alpha_3\} = \alpha_3$, and similarly $\max \{ \alpha_2', \alpha_3'\} = \alpha_3'$.  If $\alpha_3' > \alpha_3$, this means that the vertex $v'$ is closer to $(a,0)$ than the vertex $v$ as shown in Figure \ref{fig:T_of_v}.  It therefore follows from the monotonicity of the angles that $\alpha_2'<\alpha_2$ and $\alpha_1' < \alpha_1$.
\end{proof}
\begin{figure}[h]
\centering 
\resizebox{6cm}{!}{\begin{tikzpicture}[dot/.style={draw,fill,circle,inner sep=1pt}]
  \def\a{5} 
  \def\b{3} 
  \def\angle{45} 
   \def\bangle{25}
  \draw (0,0) ellipse ({\a} and {\b});
  \draw (-\a,0) 
    -- (\a,0) ;
  \draw (0,-\b)
    -- (0,\b) ;
  \node[dot] (F1) at ({-sqrt(\a*\a-\b*\b)},0) {};
  \node[dot] (G1) at ({-sqrt(\a*\a-\b*\b)},0) {};
\node[above] at ({-sqrt(\a*\a-\b*\b)},0)  {\small $(-c,0)$};  

  \node[dot] (F2) at ({+sqrt(\a*\a-\b*\b)},0) {};
  \node[dot] (G2) at ({+sqrt(\a*\a-\b*\b)},0) {};
  \node[below] at ({sqrt(\a*\a-\b*\b)},0)  {\small $(c,0)$}; 
  \node[dot,label={\angle:$v$}] (X) at (\angle:{\a} and {\b}) {};
   \node[dot,label={\angle:$v'$}] (Y) at (\bangle:{\a} and {\b}) {};
  \draw (F1) -- (X) (X) -- (F2);
    \draw (G1) -- (Y) (Y) -- (G2);
    \coordinate[label={below left:$T(v)$}] (L2) at (0,1.6);
    \coordinate[label={below right:$T(v')$}] (L1) at (2.7,1.6);
  
\end{tikzpicture}}
\caption{The angle of $T(v)$ at vertex $v$ is $\alpha_1$ and it determines $T(v)$ uniquely.} \label{fig:T_of_v}
\end{figure}
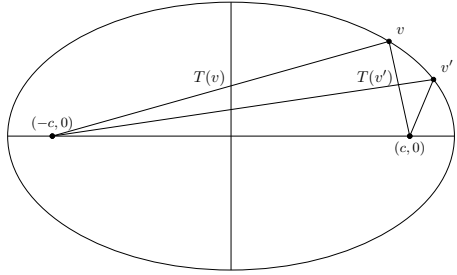

\begin{prop} \label{prop.acute among all}
Suppose $T$ satisfies either of the following conditions: 
\begin{itemize} 
\item $T$ is an acute triangle with no angle in $\{\frac{\pi}{3}, \frac{\pi}{5}, \frac{\pi}{7}\}$. 
\item $T$ is a right triangle.
\end{itemize} Then any triangle that has the same characteristic polynomial as $T$ is congruent to $T$.  
\end{prop}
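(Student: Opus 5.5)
We would split according to the number of odd angles, which by Theorem~\ref{th:triangles123}(a) is the same for $T$ and for any triangle $T'$ with $P_{T'}=P_T$. If $T$ has no odd angles, Theorem~\ref{th:triangles123}(b) already gives $T'\cong T$. So the work lies in the cases where $T$ has at least one odd angle.

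\emph{Acute case.} An acute triangle with two odd angles must have angles $\frac{\pi}{3},\frac{\pi}{5}$, and these are excluded. So we may assume $T$ has exactly one odd angle $\alpha_1=\frac{\pi}{2k+1}$ with $k\ge 4$. Then $\alpha_2,\alpha_3\in(\frac{\pi}{2}-\alpha_1,\frac{\pi}{2})\subset(\frac{\pi}{3},\frac{\pi}{2})$. The triangle $T'$ also has exactly one odd angle $\alpha_1'$. By Remark~\ref{rem:oneodd_tri},
$$\{|c(\alpha_2')|,|c(\alpha_3')|\}=\{|c(\alpha_2)|,|c(\alpha_3)|\}.$$
Suppose one of $\alpha_2',\alpha_3'$ lies in $(\frac{\pi}{3},\frac{\pi}{2}]$. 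Since $|c|$ is injective there (Lemma~\ref{lem: |c|}(c)), that angle equals $\alpha_2$ or $\alpha_3$, and Theorem~\ref{th:triangles123}(c) gives $T'\cong T$.

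Otherwise, because $\alpha_1'\le\frac{\pi}{3}$, one of the angles, say $\alpha_3'$, is obtuse and the other satisfies $\alpha_2'\le\frac{\pi}{3}$. Then Lemma~\ref{lem.angle dec} applies and gives $\alpha_1'<\alpha_1\le\frac{\pi}{9}$ and $\alpha_2'<\min\{\alpha_2,\alpha_3\}$. Consequently $\alpha_3'=\pi-\alpha_1'-\alpha_2'$ is very large; in particular $\alpha_2'$ is close to $\frac{\pi}{3}$ or smaller.

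We then locate $\frac{\pi^2}{2\alpha}$ in quadrants, as in the proof of Theorem~\ref{th:non_obtuse}:
\begin{itemize}
\item for $\alpha\in(\frac{\pi}{3},\frac{\pi}{2})$ it lies in $(\pi,\frac{3\pi}{2})$;
\item for obtuse $\alpha$ it lies in $(\frac{\pi}{2},\pi)$;
\item for $\alpha_2'\in(\frac{\pi}{m+1},\frac{\pi}{m})$ with $m\ge3$ it lies in $(\frac{m\pi}{2},\frac{(m+1)\pi}{2})$.
\end{itemize}
We then compare three relations: the sign condition $c(\alpha_2')c(\alpha_3')=c(\alpha_2)c(\alpha_3)$, the sign relation $s(\alpha_2')s(\alpha_3')=\pm s(\alpha_2)s(\alpha_3)$ (the sign fixed by the parities of $\alpha_1,\alpha_1'$), and the matching of $|c|$-values, all under the angle-sum constraint. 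From this we aim to derive a contradiction. If the bounds alone do not suffice, we would add the invariant of Lemma~\ref{lem.sine property}. This case analysis, excluding an obtuse $T'$ with one odd angle, is the step we expect to be the main obstacle. The hypothesis that $\pi/3,\pi/5,\pi/7$ are absent is exactly what forces $\alpha_1\le\frac{\pi}{9}$, which makes the quadrant bounds tight enough.

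\emph{Right case.} Let $\alpha_2=\frac{\pi}{2}$. Then $\alpha_1+\alpha_3=\frac{\pi}{2}$, and since $\frac{\pi}{2}-\frac{\pi}{2k+1}$ is never odd, $T$ has at most one odd angle. We may therefore assume exactly one, $\alpha_1$. The constant term of $P_T$ vanishes because $s(\frac{\pi}{2})=0$. Hence $T'$, which also has exactly one odd angle, has $s(\alpha_2')s(\alpha_3')=0$, i.e.\ it has an even angle. The $|c|$-set $\{1,|c(\alpha_3)|\}$ then shows that $T'$ has an even angle $\frac{\pi}{2m}$ together with an angle $\alpha_3'$ satisfying $|c(\alpha_3')|=|c(\alpha_3)|$.

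If $m=1$, Theorem~\ref{th:triangles123}(c) gives $T'\cong T$. If $m\ge2$, we use the sign identity $c(\alpha_2')c(\alpha_3')=c(\alpha_2)c(\alpha_3)=-c(\alpha_3)$ and the angle sum $\alpha_3'=\pi-\alpha_1'-\frac{\pi}{2m}\ge\frac{5\pi}{12}$. Then Lemma~\ref{lem: |c|}, together with Lemma~\ref{lem.sine property} (which forces $\frac{\sin\alpha_2'+\sin\alpha_3'}{\sin\alpha_1'}=\frac{1+\cos\alpha_1}{\sin\alpha_1}$), should leave only finitely many explicit candidates, each to be excluded by direct check.
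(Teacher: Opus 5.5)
Your reductions are sound: no odd angles; two odd angles impossible; $T'$ has exactly one odd angle; and a $T'$ with an angle in $(\frac{\pi}{3},\frac{\pi}{2}]$ is handled by injectivity of $|c|$ and Theorem~\ref{th:triangles123}(c). But the step that carries the proposition, excluding an obtuse $T'$, is left as ``we aim to derive a contradiction,'' and your heuristic points the wrong way. What is needed is an \emph{upper} bound on the obtuse angle $\alpha_3'$, not the remark that it is large. Since $\alpha_2,\alpha_3\in[\frac{7\pi}{18},\frac{\pi}{2}]$, one has $\frac{\pi^2}{2\alpha_j}\in[\pi,\frac{9\pi}{7}]$, so $|c(\alpha_j)|\ge\cos\frac{2\pi}{7}$. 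Then $|c(\alpha_3')|\in\{|c(\alpha_2)|,|c(\alpha_3)|\}$ together with $\frac{\pi^2}{2\alpha_3'}\in(\frac{\pi}{2},\pi)$ forces $\frac{\pi^2}{2\alpha_3'}\ge\frac{5\pi}{7}$, i.e.\ $\alpha_3'\le\frac{7\pi}{10}$. Lemma~\ref{lem.angle dec} plus discreteness of odd angles gives $\alpha_1'\le\frac{\pi}{11}$. The bound $\alpha_1'<\frac{\pi}{9}$ alone would only give $\alpha_2'>\frac{17\pi}{90}$, which does not rule out $\alpha_2'\in\A^-$. With $\alpha_1'\le\frac{\pi}{11}$ you get $\alpha_2'\ge\frac{3\pi}{10}-\frac{\pi}{11}>\frac{\pi}{5}$. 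In your remaining case $\alpha_2'\le\frac{\pi}{3}$ and $\alpha_2'$ is not odd, so $\alpha_2'\in(\frac{\pi}{5},\frac{\pi}{3})\subset\A^+$. Hence $c(\alpha_2')c(\alpha_3')<0<c(\alpha_2)c(\alpha_3)$, a contradiction. This is the paper's argument, built from exactly the ingredients you list; without it your acute case is only a plan.

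The right case has a more serious defect. ``Finitely many explicit candidates, each excluded by direct check'' holds for each fixed $T$, but $\alpha_1=\frac{\pi}{2k+1}$ ranges over infinitely many values, so no finite check finishes the proof. The $c$-data cannot do it uniformly either. For example, $T=(\frac{\pi}{9},\frac{\pi}{2},\frac{7\pi}{18})$ and $T'=(\frac{\pi}{5},\frac{\pi}{10},\frac{7\pi}{10})$ share the cosine coefficient $\cos\frac{2\pi}{7}$, the $|c|$-set $\{1,\cos\frac{2\pi}{7}\}$ and the vanishing constant term; only edge-length information distinguishes them. The paper's fix is to note that right triangles with $\alpha_1\le\frac{\pi}{9}$ still satisfy $\alpha_2,\alpha_3\in[\frac{7\pi}{18},\frac{\pi}{2}]$, so the argument above covers them uniformly; Lemma~\ref{lem.angle dec} kills the example, since $\alpha_1'=\frac{\pi}{5}>\alpha_1$. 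Only $\alpha_1\in\{\frac{\pi}{3},\frac{\pi}{5},\frac{\pi}{7}\}$ is done by hand. Injectivity of $|c|$ on obtuse angles pins down $\alpha_3'$. The cases $\frac{\pi}{3}$ and $\frac{\pi}{5}$ close via Theorem~\ref{th:triangles123}(c), the latter after a divisibility argument forcing $\alpha_1'=\frac{\pi}{5}$. For $\frac{\pi}{7}$ one gets $\alpha_3'=\frac{5\pi}{6}$ and three candidates $(\frac{\pi}{7},\frac{\pi}{42},\frac{5\pi}{6})$, $(\frac{\pi}{9},\frac{\pi}{18},\frac{5\pi}{6})$ and $(\frac{\pi}{15},\frac{\pi}{10},\frac{5\pi}{6})$, which are excluded by Lemma~\ref{lem.sine property}.
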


 \begin{proof} 
Under either of these assumptions, $T$ is not obtuse.  So, any non-obtuse triangle that has the same characteristic polynomial as $T$ is congruent to $T$ by Theorem \ref{th:non_obtuse}.  Thus, we only need to analyze the possibility that an obtuse triangle has the same characteristic polynomial as $T$.  Moreover, if $T$ does not have any odd angles, then by Theorem \ref{th:triangles123}, any triangle that has the same characteristic polynomial as $T$ is congruent to $T$.  In both cases of the proposition, $T$ cannot have two odd angles. To see this, in the first case, the third angle would necessarily be at least $\frac{7\pi}{9}$, a contradiction, while in the case of a right triangle the sum of $\frac \pi 2$ and two odd angles cannot equal $\pi$. So, as in the proof of Theorem~\ref{th:non_obtuse}, we may suppose $T$ has precisely one odd angle. We therefore assume that $\alpha_1$ is odd.  By Theorem \ref{th:triangles123}, the characteristic polynomial determines the number of odd angles, so we assume that $T'$ is obtuse, has the same characteristic polynomial as $T$, and has one odd angle denoted $\alpha_1'$.   

To prove the proposition, it thus suffices to consider two cases:
\begin{itemize}
    \item[(a)] $T$ is a non-obtuse triangle (either an acute triangle or a right triangle) with odd angle $\alpha_1\notin\{\frac{\pi}{3}, \frac{\pi}{5}, \frac{\pi}{7}\}$.
    \item[(b)] $T$ is a right triangle with $\alpha_1\in\{\frac{\pi}{3}, \frac{\pi}{5}, \frac{\pi}{7}\}$.
\end{itemize}

Consider case (a).  We must have $\alpha_1\leq \frac{\pi}{9}$.   Since $T$ has no obtuse angles, and since $\frac{\pi}{2}-\frac{\pi}{9}=\frac{7\pi}{18}$, we must have 
$$\frac{7\pi}{18}\leq \alpha_2,\alpha_3\leq \frac{\pi}{2}.$$
Thus $\frac{\pi^2}{2\alpha_i} \in [\pi, \frac{9\pi}{7}]$ for $i=2,3.$ 
Let $\alpha_3'$ be the obtuse angle of $T'$. Then we immediately obtain $\frac{\pi^2}{2\alpha_3'} \in (\frac{\pi}{2},\pi).$  However, by Remark \ref{rem:oneodd_tri}, we know that $|c(\alpha_3')|$ must equal one of $|c(\alpha_2)|$ or $|c(\alpha_3)|$, which implies that $\frac{\pi^2}{2\alpha_3'} \in [\frac{5\pi}{7},\pi).$ 
We therefore have 
$\alpha_3' \in (\frac{\pi}{2},\frac{7\pi}{10}]$, so that \[ \alpha_1'+\alpha_2'\geq \frac{3\pi}{10}.\] 
Since $\alpha_2, \alpha_3 \leq \frac{\pi}{2} < \alpha_3'$, 
Lemma~\ref{lem.angle dec} implies 
$\alpha_1' < \alpha_1 \leq \frac \pi 9$, and hence $\alpha_1' \leq \frac{\pi}{11}$.
Thus 
\beq \alpha_2'\geq \frac{3\pi}{10}-\frac{\pi}{11} >\frac{\pi}{5}\ \  \text{so that} \ \  \pi < \frac{\pi^2}{2\alpha_2'} < \frac{5\pi}{2}, \label{eq:beta2'_range} \eeq 
where we used that $\alpha_2' < \frac{\pi}{2}$ since $\alpha_3'$ is obtuse.  
Observe that $c(\alpha_2), c(\alpha_3)$ and $c(\alpha_3')$ are all negative.  Since the sign of $c(\alpha_2)c(\alpha_3)$ equals that of $c(\alpha_2')c(\alpha_3')$, we have $c(\alpha_2')<0$. Combining this sign information with 
Remark \ref{rem:oneodd_tri} yields that $ c(\alpha_2')=c(\alpha_j)<0$ for some choice of $j\in\{2,3\}$.  For either choice of $j$, we have $\frac{\pi^2}{2\alpha_j} \in [\pi, \frac{9\pi}{7}]$ and thus $\frac{\pi^2}{2\alpha_2'} \in (\pi, \frac{9\pi}{7}].$ Hence $\alpha_2' \in [\frac{7\pi}{18},\frac{\pi}{2}).$  But $|c|$ is bijective on $(\frac{\pi}{3},\frac{\pi}{2})$ by Lemma \ref{lem: |c|}, so $\alpha_2'$ equals one of $\alpha_2$ or $\alpha_3$.  By Theorem \ref{th:triangles123}(c), $T=T'$.  

We now show that the proposition holds in case (b).  Continuing to let $\alpha_1$ and $\alpha_1'$ denote the odd angle in $T$ and $T'$, respectively,  there exist $k,k'\in \Z_{+}$ such that 
$$\alpha_1= \frac{\pi}{2k+1} \mbox{ and } \alpha'_1= \frac{\pi}{2k'+1}, $$ 
where $k\in \{1,2,3\}$.  Let $\alpha_2= \frac{\pi}{2}$ and $\alpha_3=\frac{2k-1}{4k+2}\pi$.
Since the characteristic polynomial will have no constant term, we know that $T'$ has at least one even angle.  If this angle is $\frac{\pi}{2}$, then $T=T'$, so we set $\alpha_2'= \frac{\pi}{2m}$ for some positive integer $m\geq 2$. The characteristic polynomials are
\[ P_T (t) =\cos(t) -c(\alpha_3)
\cos (|1 - 2\ell_3|t)= \cos(t) +(-1)^{m}c(\alpha'_3)
\cos (|1 - 2\ell'_3|t)= P_{T'} (t).\] 
Thus $\ell_3=\ell_3'$, and, since $\alpha_3'$ is obtuse and $\frac{\pi^2}{2\alpha_3'} \in (\frac{\pi}{2}, \pi)$, we have 
$$c(\alpha'_3)=-|c(\alpha'_3)|=-|c(\alpha_3)|= -\left|\cos \left(\frac{2k+1}{2k-1}\pi\right)\right|.$$ 
Since $|c|$ is injective on the set of obtuse angles by Lemma \ref{lem: |c|}, we may compute $\alpha_3'$ for each case $k=1,2,3$. 

\noindent \textbf{Case $\textbf{k=1}$.} In this case we have that 
\[c(\alpha_3') = -|\cos(3\pi)|=-1,\]
so that $\frac{\pi^2}{2 \alpha_3'} = \pi$ and $\alpha_3' = \frac{\pi}{2}=\alpha_2$.  By Theorem \ref{th:triangles123}(c), $T=T'$. \\
\textbf{ Case $\textbf{k=2}$.}  In this case we have that 
\[c(\alpha_3') = -\left|\cos\left(\frac{5\pi}{3}\right)\right|=-\frac{1}{2},\]
so that $\frac{\pi^2}{2 \alpha_3'} = \frac{2\pi}{3}$ and $\alpha_3' = \frac{3\pi}{4}$.   Thus, \[ \alpha'_1 = \pi - \frac{3\pi}{4} - \frac{\pi}{2m} = \frac{(m-2)\pi}{4m}=\frac{\pi}{2k'+1},\]  so $(m-2)(2k'+1) = 4m$, and $4$ divides $m-2$.  
Thus $m\geq 6$, and consequently $\alpha_2' = \frac{\pi}{2m} \leq \frac{\pi}{12}$. Hence 
\[ \alpha_1' = \pi - \alpha_2' - \alpha_3' \geq \frac{\pi}{6}. \]
So $\alpha_1'$ is an odd angle between $\frac{\pi}{6}$ and $\frac{\pi}{4}$, which implies $\alpha'_1 = \frac{\pi}{5} =\alpha_1.$
Then $T=T'$.\\
\noindent \textbf{ Case $\textbf{k=3}$.}  
In this case we have that 
\[c(\alpha_3') = -\left|\cos\left(\frac{7\pi}{5}\right)\right|=\cos\left(\frac{3\pi}{5}\right),\]
so that $\frac{\pi^2}{2 \alpha_3'} = \frac{3\pi}{5}$ and $\alpha_3' = \frac{5\pi}{6}$. 
This implies that 
$$\pi= \frac{\pi}{2k'+1} + \frac{\pi}{2m}+\frac{5\pi}{6}.$$

Thus 
\[\frac{1}{2k'+1} + \frac{1}{2m} = \frac{1}{6}.\]
At least one of the two summands must be greater than or equal to $\frac{1}{12}$ while each is less than $\frac{1}{6}$.     Thus either $m\in\{4,5,6\}$ or $k'\in\{3,4,5\}$, and one easily checks that the only possibilities for $T'$ are
$$T'_1:=\left(\frac{\pi}{7},\frac{\pi}{42}, \frac{5\pi}{6}\right),~T'_2:=\left(\frac{\pi}{9},\frac{\pi}{18}, \frac{5\pi}{6}\right)~\mbox{ and } T'_3=\left(\frac{\pi}{15},\frac{\pi}{10}, \frac{5\pi}{6}\right). $$

To complete the proof, we use Lemma \ref{lem.sine property}. We have $\alpha_1=\frac{\pi}{7}$, $\alpha_2=\frac{\pi}{2}$, and $\alpha_3=\frac{5\pi}{14}$, so 
$$L_T:=\frac{\sin\alpha_2+\sin\alpha_3}{\sin \alpha_1}=\frac{\sin \frac{\pi}{2}+\sin\frac{5\pi}{14}}{\sin \frac{\pi}{7}}\approx 4.38.$$
On the other hand,
$$L_{T'} \in \left\{ \frac{\sin \frac{\pi}{42}+\sin\frac{5\pi}{6}}{\sin \frac{\pi}{7}}\approx 1.32, ~~~\frac{\sin \frac{\pi}{18}+\sin\frac{5\pi}{6}}{\sin \frac{\pi}{9}}\approx 1.97,~~~\frac{\sin \frac{\pi}{10}+\sin\frac{5\pi}{6}}{\sin \frac{\pi}{15}}\approx 3.89\right\}.$$
Since we have assumed $T$ and $T'$ each have precisely one odd angle and have the same characteristic polynomial, this is a contradiction to Lemma \ref{lem.sine property}.
 \end{proof}

\section{Special classes of quadrilaterals}\label{s:quad}
We consider three special classes of convex quadrilaterals:  rectangles, parallelograms, and kites.   Our strongest results are for rectangles:  we find that every rectangle is uniquely distinguished by its characteristic polynomial within the set of all convex quadrilaterals. We then address whether parallelograms are mutually distinguishable and similarly for kites.

\subsection{Rectangles}\label{ss:rectangles}

The characteristic polynomial of a rectangle $R$ of perimeter equal to one and with edge lengths $\ell$ and $\ell'$, with $\ell\leq\ell'$, is given by
\beq\label{eq:char rect}P_R(t)= \cos(t)+2\cos(2\ell t)+2\cos(2\ell' t)+\cos(2(\ell'-\ell)t)+2.\eeq
In the special case of a square, so $\ell=\ell'$, the characteristic polynomial simplifies to 
\beq\label{eq:char square} P_R(t)=\cos(t) + 4\cos(t/2) +3.\eeq  

\begin{thm}\label{thm:rectangle}
Let $R$ be a rectangle. Then $R$ is uniquely determined within the class of all convex quadrilaterals and triangles by its characteristic polynomial.
\end{thm}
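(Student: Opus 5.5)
Let $R$ have edge lengths $\ell\le\ell'$ with $2\ell+2\ell'=1$, and suppose $\Om$ is a convex quadrilateral or a triangle with $P_\Om=P_R$. The plan is to use two features of \eqref{eq:char rect} and \eqref{eq:char square}. First, the constant term of $P_R$ is $2$ (respectively $3$ for the square), and it comes entirely from terms $a_{[\bxi]}\cos(0\cdot t)$ with $\bxi\cdot\bell=0$. Second, all four angles are even angles with $c=-1$, so every coefficient has absolute value $1$. Throughout, the tool is the ``practical guide'' in Remark~\ref{ss:cpguide}: cosine terms with distinct frequencies are linearly independent, so the two polynomials must agree frequency by frequency, including at frequency $0$.

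\textbf{Triangles.} For a triangle $T$, \eqref{eq:triangle_charpoly} has constant term $-s(\al_1)s(\al_2)s(\al_3)$, which has absolute value at most $1$. This is because the frequencies $|1-2\ell_j|$ never vanish, by the triangle inequality as in the proof of Theorem~\ref{th:triangles123}(a). That contradicts the constant term $2$ or $3$ of $P_R$, so no triangle has the same polynomial.

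\textbf{Quadrilaterals.} For a convex quadrilateral $Q$ with data $\bell,\balpha$, the constant term of $P_Q$ equals
$$\sum_{[\bxi]:\,\bxi\cdot\bell=0}a_{[\bxi]}-\prod_j s(\al_j).$$
Each term in this expression has absolute value at most $1$. I will first enumerate which classes $[\bxi]$ can have $\bxi\cdot\bell=0$. By the quadrilateral inequality, no single edge equals the sum of the other three, so only classes with two plus and two minus signs qualify. These classes are $\ell_1+\ell_2=\ell_3+\ell_4$, $\ell_1+\ell_3=\ell_2+\ell_4$, and $\ell_1+\ell_4=\ell_2+\ell_3$. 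At most two of these relations can hold at once, since any two of them force two pairs of edges to be equal.

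To reach constant term $2$ we need at least one vanishing combination whose coefficient is nonzero. Comparing the remaining nonconstant terms then pins down $\bell$. The frequencies of $P_R$ are $1$, $2\ell$, $2\ell'$, and $2(\ell'-\ell)$, with coefficients $1,2,2,1$. A coefficient $2$ at frequency $2\ell$ can only arise as a sum of two classes whose coefficients are each $\pm1$ and agree in sign. So I will show that the frequency multiset $\{|\bxi\cdot\bell|\}$ of $Q$ must contain $2\ell$ and $2\ell'$ twice each, together with $0$ once and $2(\ell'-\ell)$ once, and that all relevant $|a_{[\bxi]}|$ equal $1$.

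\textbf{Main obstacle.} The hardest step is the bookkeeping that rules out cancellations and coincidences of frequencies, as described in Remark~\ref{ss:cpguide}. I would first show that $|a_{[\bxi]}|=1$ for every class appearing. This would force every angle of $Q$ to be even, because a product of $|c|$ values equals $1$ only if each factor is $1$, by Lemma~\ref{lem: |c|}(b). Then $\prod_j s(\al_j)=0$. A constant term of $2$ (or $3$) then needs two (or three) vanishing combinations, and this forces $Q$ to have opposite sides equal: $\ell_1=\ell_3$ and $\ell_2=\ell_4$. In that case $Q$ is a parallelogram whose angles are all even. Even angles are at most $\pi/2$, and the angles of a quadrilateral sum to $2\pi$, so all four angles equal $\pi/2$ and $Q$ is a rectangle.

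Finally, I compare \eqref{eq:char rect} frequency by frequency to see that its edge lengths are $\ell,\ell'$. For the square case I use \eqref{eq:char square}, where the constant term is $3$. This shows $Q\cong R$.
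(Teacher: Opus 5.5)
Your triangle case is correct and takes a slightly different route from the paper. You compare constant terms, $|s(\al_1)s(\al_2)s(\al_3)|\le 1<2$. The paper instead compares $P(0)$, the total of all coefficients, which is $8$ for a rectangle and at most $5$ for a triangle. Either works.

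The quadrilateral case has a genuine gap. The step that carries the theorem is that $|a_{[\bxi]}|=1$ for every class, equivalently $|c(\al_j)|=1$ for all $j$. You flag this as the ``main obstacle'' but never prove it, and the frequency-by-frequency bookkeeping you sketch toward it fails in several places:
\begin{enumerate}
\item A coefficient $2$ at frequency $2\ell$ need not come from exactly two classes with coefficients $\pm1$. Nothing you have said prevents three or more classes from sharing that frequency, with coefficients of modulus less than $1$ summing to $2$. Ruling this out is exactly the work that remains.
\item Your target frequency multiset is miscounted. You list only seven classes, but there are eight, and frequency $0$ must occur twice.
\item The claim that at most two of the relations $\bxi\cdot\bell=0$ hold at once is false for equilateral quadrilaterals. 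That is precisely the square case, where all three hold and the constant term is $3$.
\item Two such relations need not give $\ell_1=\ell_3$ and $\ell_2=\ell_4$. For example, $\ell_1+\ell_2=\ell_3+\ell_4$ together with $\ell_1+\ell_3=\ell_2+\ell_4$ gives the kite pattern $\ell_2=\ell_3$, $\ell_1=\ell_4$. This step is unnecessary anyway: four even angles summing to $2\pi$ already forces four right angles.
\end{enumerate}

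The missing idea, which is the paper's argument, is to count total coefficient mass instead of matching frequencies one at a time.
\begin{enumerate}
\item A convex quadrilateral has at most two odd angles, so $|\prod_j s(\al_j)|<1$.
\item Since every $|a_{[\bxi]}|\le 1$, a constant term equal to $2$ forces at least two classes with $\bxi\cdot\bell=0$. So at most six classes have nonzero frequency.
\item The coefficients of the nonconstant terms of $P_R$ sum to $1+2+2+1=6$. Hence there are exactly six classes of nonzero frequency, and each has $a_{[\bxi]}=1$.
\item The class flipping only $\ell_j$ has nonzero frequency $1-2\ell_j$ and coefficient $c(\al_{j-1})c(\al_j)$, so this product equals $1$. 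Hence $|c(\al_j)|=1$ for every $j$, all angles are even, and the angle sum makes $Q$ a rectangle. The edge lengths are then read off as you indicate.
\item For the square, the constant term $3$ forces all three two-flip classes to have frequency $0$. The remaining five classes must then carry coefficients summing to $5$, so again every $a_{[\bxi]}=1$ and the same conclusion follows.
\end{enumerate}
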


The fact that any two rectangles are mutually distinguishable by their Steklov spectrum was proven by other methods in \cite[Corollary 1.8]{cuboid}.

\begin{proof} 
Let $R$ be a rectangle of perimeter one, and let $\Om$ be a triangle or quadrilateral with $P_\Om=P_R$.     The sum of the coefficients of all the cosine terms together with the constant term in the characteristic polynomial is equal to 8.    This is strictly larger than that of any triangle.  Thus $\Om$ is a quadrilateral.   

Denote the angles of $\Om$ by $\al_1, \dots, \al_4$ and the vector of edge lengths of $\Om$ by $\bell_*$.  To show that $\Om$ is a rectangle, it suffices to show that $|c(\al_j)|=1$ for all $j$, i.e., that all angles are even.  Indeed, rectangles are the only quadrilaterals with all angles even since the angle sum of a quadrilateral is $2\pi$.  Recall from Definition~\ref{def: char poly} that the coefficient $a_{[\bxi]}$ is equal to 1  when $[\bxi]=[(1,1,1,1)]$  
and $a_{[\bxi]}$ is a product of some of the $c(\al_j)$'s otherwise.      

Let $A:=\{[\bxi]\in\{\pm 1\}^4/\sim:\,\bxi\cdot\bell_*=0\}$ and $B=\{[\bxi]\in\{\pm 1\}^4/\sim:\,\bxi\cdot\bell_*\neq0\}$.   
For simplicity, first assume that the comparison rectangle $R$ is not a square. Then the constant term in $P_\Om$ satisfies
\[2=\sum_{[\bxi]\in A}\, a_{[\bxi]}\cos(|\bxi\cdot\bell_* |t) -\prod_{j=1}^4\,s(\alpha_j).\] 
Since at most two angles of $\Om$ can be odd,  we have $\left|\prod_{j=1}^4\,s(\al_j) \right|<1$.  The fact that $|a_{[\bxi]}|\leq 1$ for all $\bxi$ implies that $|A|\geq 2$ and thus $|B|\leq 6$.  Since the sum of the coefficients of the cosine terms in $P_{\Om}$ is six, we must have $a_{[\bxi]}=1$ for each $[\bxi]\in B$.  One can check that for each choice of $j$, $c(\al_j)$ occurs as a factor in exactly half of the $a_{[\bxi]}$'s, implying that $|c(\alpha_j)|=1$ for all $j$.  As noted above, it follows that $\Om$ is a rectangle.    A similar argument goes through when the comparison rectangle $R$ is a square.

Now that we know $\Om$ is a rectangle, it is straightforward to read off the edge lengths from the cosine frequencies occurring in $P_{\Om}-\cos(t)$: the largest frequency yields $\ell'$ and then $\ell =\frac{1}{2}-\ell'$.    
\end{proof}

\subsection{Parallelograms}

Let $\Om$ be a parallelogram of perimeter one with angles $\alpha_1(=\alpha_3)$ and $\alpha_2(=\alpha_4)$ and edge lengths $\ell$ and $\ell'$, where $\ell\leq \ell'$.  The characteristic polynomial is given by

\beq
P_{\Om}(t) = \cos(t) &+& 2c(\al_1)c(\al_2)\cos(2\ell t) + 2c(\al_1)c(\al_2)\cos(2\ell' t) \nn \\
&+& c(\al_1)^2 c(\al_2)^2 \cos\left(2|\ell'-\ell |t\right) + C
\label{eq:char parallelogram}
\eeq
where $C$ is given by 

\beq\label{char par const}
C= c(\alpha_1)^2 +c(\al_2)^2 \, -\,s(\al_1)^2s(\al_2)^2\,= \,2c(\alpha_1)^2 +2c(\al_2)^2 - c(\al_1)^2 c(\al_2)^2 -1.
\eeq

In case $\Om$ is a rhombus, so $\ell=\ell' =\frac{1}{4}$, Equation~\eqref{eq:char parallelogram} simplifies to 

\beq\label{eq:char rhombus} P_{\Om}(t) = \cos(t) + 4c(\al_1)c(\al_2)\cos\left(\frac{t}{2} \right)  
+ 2c(\alpha_1)^2 +2c(\al_2)^2 \, -1.
\eeq

If $\Om$ is any parallelogram (possibly a rhombus) with a pair of odd angles, say $\alpha_2$, then Equation~\eqref{eq:char parallelogram} reduces to 

\beq\label{eq:char par odd}
P_{\Om}(t) = \cos(t) \,+\, c(\alpha_1)^2  \, -\,s(\al_1)^2\,=\, \cos(t) \,+\, \cos\left(\frac{\pi^2}{\alpha_1}\right).
\label{eq:char paral odd}
\eeq

\begin{thm}\label{thm:parallelograms}
Let $\Om$ be a parallelogram.
\begin{itemize}
\item[(a)] 
If $\Om$ has no odd angles, then there is at most one other parallelogram with the same characteristic polynomial as $\Om$.   Moreover, the characteristic polynomial determines the edge lengths.
\item[(b)] If $\Om$ has a pair of odd angles, then the characteristic polynomial uniquely determines the angles of $\Om$ but gives no information about the edge lengths.
\end{itemize}
\end{thm}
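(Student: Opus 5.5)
The plan is to work directly with the explicit forms \eqref{eq:char parallelogram}, \eqref{eq:char rhombus} and \eqref{eq:char par odd}, comparing $\Om$ with a second parallelogram $\Om'$ (angles $\al_1',\al_2'$, edges $\ell_*\le \ell_*'$) having the same characteristic polynomial. Write $x=c(\al_1)$, $y=c(\al_2)$, and similarly $x',y'$. Since $\al_1+\al_2=\pi$, at most one of the two angles is at most $\frac{\pi}{2}$; the other is at least $\frac{\pi}{2}$. In particular, at most one of the pair of angle values can be odd. Moreover, "no odd angles" is detectable: $xy\ne0$ exactly when a nonconstant cosine term other than $\cos(t)$ survives.

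\textbf{Part (b).} Suppose $\al_2$ is odd. By \eqref{eq:char par odd}, $P_\Om(t)=\cos t+\cos(\pi^2/\al_1)$, so the edge lengths do not appear, and any choice of $\ell$ is compatible; this gives the "no information" clause. By the remark above, $\Om'$ must also have an odd pair, say $\al_2'$. Then $\cos(\pi^2/\al_1)=\cos(\pi^2/\al_1')$, where $\al_1=\pi-\frac{\pi}{2k+1}$ and $\al_1'=\pi-\frac{\pi}{2k'+1}$ are obtuse (or $\al_1=\frac{2\pi}{3}$ when $k=1$). Since $\pi^2/\al_1\in(\pi,\frac{3\pi}{2}]$ and cosine is injective on $[\pi,2\pi]$, we get $\al_1=\al_1'$. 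Equivalently, one can note that $\cos(\pi^2/\al)=2c(\al)^2-1$ and that $|c|$ is injective on obtuse angles by Lemma~\ref{lem: |c|}(c).

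\textbf{Part (a).} Now $xy\neq0$. If $\ell<\ell'$, the frequencies $1$, $2\ell'$, $2\ell$ and $2(\ell'-\ell)$ appear. We need care with coincidences: $2\ell=2(\ell'-\ell)$ when $\ell'=2\ell$, i.e.\ $\ell=\frac16$. The largest frequency below $1$ is $2\ell'$, with coefficient $2xy$, and this determines $\ell'$, hence $\ell=\frac12-\ell'$. In the rhombus case, the only frequencies are $1$ and $\frac12$, and the coefficient of $\cos(t/2)$ is $4xy$ rather than $2xy$. I must check that a rhombus cannot mimic a non-rhombus: for a non-rhombus the cosine at $2\ell'\neq\frac12$ survives, while a rhombus has only frequency $\frac12$, so the edge lengths are determined in all cases. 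This also yields $xy$ from the coefficient of $\cos(2\ell' t)$. Next, $C$ gives $2(x^2+y^2)-x^2y^2-1$, hence $x^2+y^2$. Therefore $\{x^2,y^2\}$ is determined as the root set of a quadratic, and $xy$ fixes the relative sign. So $(|x|,|y|)$ is known up to order, with the sign of $xy$ known.

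To finish, let $\al_2\ge\frac\pi2$ be the non-acute angle. Then $|c(\al_2)|$ must be one of the two known values $u,v$, and by Lemma~\ref{lem: |c|}(c) (with the right angle as the endpoint) each value determines at most one angle $\al_2\in[\frac\pi2,\pi)$. Since $\al_1=\pi-\al_2$, there are at most two candidate angle pairs, and together with the determined edge lengths this gives at most two parallelograms. The remaining step is to check that each candidate is consistent, or else is automatically excluded, for instance by requiring $|c(\pi-\al_2)|$ to equal the other value and using the sign of $xy$. That check is not needed for the "at most one other" bound.

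\textbf{Main obstacle.} The hardest part is the bookkeeping of coincident frequencies and the rhombus/square degenerations in part (a), especially $\ell=\frac16$ and $\ell=\ell'$. Here I must make sure the coefficient read at the top frequency $2\ell'$ is uncontaminated, which holds because $2(\ell'-\ell)<2\ell'$ and $2\ell<2\ell'$. I must also confirm that rectangles, where $xy=1$, are handled consistently with Theorem~\ref{thm:rectangle}.
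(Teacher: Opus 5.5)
Your proposal is correct and follows the paper's proof essentially step for step. In (a) you read off $\ell'$ from the top frequency, recover $xy$ and $x^2+y^2$ (the paper phrases this as intersecting a hyperbola with a circle), and use injectivity of $|c|$ on obtuse angles to get at most two candidates; in (b) you use injectivity of $\alpha_1\mapsto\cos(\pi^2/\alpha_1)$ on obtuse angles, exactly as the paper does. Two small refinements go slightly beyond the paper: you use injectivity on $[\pi/2,\pi)$ to absorb rectangles directly instead of citing Theorem~\ref{thm:rectangle}, and you explicitly check that any $\Om'$ with the same polynomial has the same odd/no-odd type, which the paper leaves implicit.
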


\begin{proof}
(a) Since $\Om$ has no odd angles, Equations~\eqref{eq:char parallelogram} and \eqref{eq:char rhombus} enable us to read off the longer edge length $\ell'$ (or the unique edge length $\frac{1}{4}$ in the case of a rhombus) from the maximal cosine frequency occurring in $P_{\Om}(t) -\cos(t)$, and then $\ell=\frac{1}{2}-\ell'$.    

It remains to show that the angles are determined up to two possibilities.   Write $x=c(\alpha_1)$ and $y=c(\al_2)$, and observe that $A:=xy$ and $B:=x^2+y^2$ are determined by the characteristic polynomial.  The intersection of the hyperbola $xy=A$ and the circle $x^2+y^2=B$ is of the form $\{(a,b), (-a,-b), (b,a), (-b,-a)\}$ for some $a,b$.    Thus we can read off $(c(\al_1), c(\al_2))$ up to order and global change of sign. By Theorem~\ref{thm:rectangle}, we may assume that $\Om$ is not a rectangle and thus that one angle of $\Om$ is obtuse.   Obtuse angles $\al$ are uniquely determined by $|c(\al)|$, so the obtuse angle is determined up to two possibilities and thus $\Om$ is too.  

(b)   Without loss of generality, suppose $\al_2$ is odd so that $P_\Om(t)$ is given by \eqref{eq:char paral odd}.  Observe that the map $\alpha_1\to \cos\left(\frac{\pi^2}{\alpha_1}\right)$ is injective on the interval $\left(\frac{\pi}{2},\pi\right)$.      Since $\alpha_1$ is necessarily obtuse, we can thus read off $\alpha_1$ from $P_\Om(t)$ and then $\al_2=\pi-\al_1$.    
\end{proof}

\subsection{Kites}\label{ss:kites}
  We now consider convex quadrilaterals known as kites; see Figure \ref{fig:kite}.  We will denote the edge lengths by $\ell$ and $\ell'$ and denote the angles by $\alpha$, $\gamma$, and $\gamma'$.   Rhombi are precisely the equilateral kites, i.e., kites satisfying $\ell=\ell'$ and then necessarily $\gamma=\gamma'$.  We choose our labeling so that 
\beq\label{eq:kite labels}\ell\leq \ell'\mbox{\,\,and then\,\,} \gamma\geq\gamma'.
\eeq

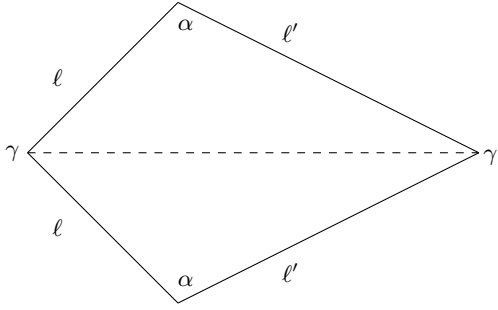
\begin{figure}[h] \centering 
\begin{tikzpicture}
\draw   (-2,0) --  (0,-2);
\draw  (-2, 0) -- (0, 2); 
\draw (0, -2) -- (4, 0); 
\draw (0, 2) -- (4, 0); 
\draw [dashed] (-2,0) -- (4,0); 
\node at (-2.2,0) {\small $\gamma$}; 
\node at (-1.6, 1) {\small $\ell$}; 
\node at (-1.6, -1) {\small $\ell$}; 
\node at (1.5, -1.6) {\small $\ell'$}; 
\node at (1.5, 1.6) {\small $\ell'$};
\node at (4.2, 0) {\small $\gamma'$}; 
\node at (0.1, -1.7) {\small $\alpha$}; 
\node at (0.1, 1.7) {\small $\alpha$}; 

\end{tikzpicture}
\caption{A convex kite is a quadrilateral that has two pairs of adjacent sides that are of equal lengths. Consequently, splitting the kite along the dashed segment, one obtains two congruent triangles.} \label{fig:kite} 
\end{figure}

We assume all kites under consideration have perimeter one.  Following Definition ~\ref{def: char poly}, the characteristic polynomial of a kite $K$ simplifies to 

\beq
P_K (t) = \cos(t) &+& 2c(\alpha)c(\gamma)\cos(2\ell' t) \,+ \,2c(\alpha)c(\gamma')\cos(2\ell t) \,+\, c(\alpha)^2\cos\left(2|\ell'-\ell|t\right) \nn \\
&+& \left[c(\gamma)c(\gamma')\, +\, c(\alpha)^2c(\gamma)c(\gamma')\,-\, s(\alpha)^2s(\gamma)s(\gamma')\right].\label{eq:char_kite}
 \eeq
 
We will see that for almost all kites $K$, the characteristic polynomial of $K$ determines $K$ up to at most three possibilities within the set of all kites, and a substantial collection of kites are uniquely determined.  There are precisely two cases in which two of the values $\ell, \ell'$, and $\ell'-\ell$ coincide:  when $K$ is a rhombus and therefore $\ell=\ell'=\frac{1}{4}$, or when $\ell=\ell'-\ell$, in which case $(\ell, \ell')=(\frac{1}{6}, \frac{1}{3})$.  Also note that a kite can have at most two odd angles and, in that case, the two odd angles must be opposite each other.  We will be interested in the minimal amount of geometric data needed to determine a kite; the elementary proof of the following lemma is left to the reader.

\begin{lemma}\label{lem:angle determines}
If two kites have the same edge lengths and at least one angle in common, then they are congruent.
\end{lemma}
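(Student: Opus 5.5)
The plan is to reduce everything to one triangle. Cut the kite $K$ along its axis of symmetry, the dashed diagonal in Figure~\ref{fig:kite}, of length $d$. This gives two congruent triangles with sides $\ell,\ell',d$. In that triangle:
\begin{itemize}
\item the angle opposite $d$ is $\alpha$;
\item the angle opposite $\ell'$ is $\varphi:=\gamma/2$;
\item the angle opposite $\ell$ is $\psi:=\gamma'/2$.
\end{itemize}
Conversely, $(\ell,\ell',d)$ determines $K$ up to congruence. So with $\ell\le\ell'$ fixed, a kite is the same as a value of $d\in(\ell'-\ell,\ell+\ell')$ for which the resulting quadrilateral is convex. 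Since $\alpha<\pi$ automatically and $\psi\le\varphi$ (because $\ell\le\ell'$), convexity is exactly the condition $\varphi<\pi/2$. By the law of cosines this reads $d^2>\ell'^2-\ell^2$. The lemma then amounts to showing that on this admissible interval of $d$, the shared angle value pins down $d$.

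The first step is monotonicity on the admissible interval, computed from the law of cosines.
\begin{itemize}
\item $\alpha$ is strictly increasing in $d$.
\item For $\cos\varphi=\frac{\ell^2+d^2-\ell'^2}{2\ell d}$, the $d$-derivative is $\frac{\ell'^2-\ell^2}{2\ell d^2}+\frac{1}{2\ell}>0$. Hence $\varphi$, and so $\gamma$, is strictly decreasing.
\item For $\cos\psi=\frac{\ell'^2+d^2-\ell^2}{2\ell' d}$, the $d$-derivative has the sign of $d^2-(\ell'^2-\ell^2)$, which is positive precisely on the convex range. Hence $\gamma'$ is also strictly decreasing there.
\end{itemize}
It follows at once that two kites with the same edge lengths and the same $\alpha$, or the same $\gamma$, or the same $\gamma'$, have the same $d$ and are congruent. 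This is the geometric content of the lemma. The convexity restriction is exactly what removes the ambiguous SSA case when $\gamma'$ is the shared angle: the two SSA triangles have $\varphi$ and $\pi-\varphi$, and only one of these is acute.

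The main obstacle is the literal wording ``at least one angle in common'', which also allows the shared value to sit at different kinds of vertices, for example $\alpha(K)=\gamma(K')$. Since $\alpha$ increases while $\gamma,\gamma'$ decrease in $d$, I would try to rule out such coincidences. I do not expect this to succeed in general.
\begin{itemize}
\item For a rhombus ($\ell=\ell'$) the two kite labelings are interchangeable, and such mixed coincidences do give congruent kites.
\item For $\ell<\ell'$ close to $\ell'$, take a near-rhombus $K$ with $\alpha=2\pi/3$ and another $K'$ with $\gamma=2\pi/3$. These appear to be non-congruent kites sharing the angle value $2\pi/3$.
\end{itemize}
So the plan is to prove the lemma with ``in common'' read as equality of corresponding angles $\alpha$, $\gamma$, or $\gamma'$. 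In the written proof I would state this reading explicitly, and add a remark that the mixed-vertex reading fails. When the lemma is applied later, the characteristic polynomial must then be used to decide which type of vertex carries the known angle.
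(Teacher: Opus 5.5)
Your proof is correct, and it supplies an argument the paper omits: the authors call the proof elementary and leave it to the reader, so there is no written route to compare against. The parametrization by the axis length $d$, the convexity criterion $d^2>\ell'^2-\ell^2$, and the three derivative signs all check out. Strict monotonicity of $\alpha$, $\gamma$, $\gamma'$ on the convex range then gives the lemma for corresponding angles.

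The argument the authors presumably have in mind is plain triangle congruence for the half-triangle with sides $\ell,\ell',d$:
\begin{itemize}
\item a common $\alpha$ is SAS;
\item a common $\gamma$ is SSA with the known angle $\gamma/2$ opposite the longer side $\ell'$, hence unambiguous;
\item a common $\gamma'$ is the ambiguous SSA case. Its other solution (when it exists) has $\pi-\gamma/2$ in place of $\gamma/2$, so it has a reflex angle at the $\gamma$-vertex, which convexity excludes.
\end{itemize}
Your calculus version says the same thing. What it adds is that all three angle maps are injective on one explicit interval of $d$, and it shows that convexity is exactly what restores injectivity of $d\mapsto\gamma'$.

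Your interpretive caveat is also right. Your tentative counterexample works for every $\ell<\ell'$, not just near the rhombus:
\begin{itemize}
\item convex kites with $\alpha=2\pi/3$ and with $\gamma=2\pi/3$ both exist;
\item when $\ell\neq\ell'$, a congruence must preserve vertex types;
\item a kite with $\alpha=\gamma=2\pi/3$ would force $\gamma'=2\pi-2\alpha-\gamma=0$.
\end{itemize}
So the lemma has to be read with the common angle at vertices of the same type. That is how the paper uses it. In Propositions~\ref{kites_no_odd_angles}, \ref{kites_one_odd_angle} and \ref{prop:kites_two_unequal_odd} the authors always first decide which vertex type carries the recovered angle. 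For example, the two cases ``$\alpha$ obtuse'' and ``$\gamma$ obtuse'' are what produce ``at most one other kite'', and they separately settle which type of vertex carries the odd angle.
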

 
We first consider kites with no odd angles and no length coincidences; almost all kites are in this class.

\begin{prop}\label{kites_no_odd_angles}
Let $K$ be a kite with no odd angles and suppose that $\ell' \neq \ell, 2\ell$.  Then there is at most one other kite with the same characteristic polynomial.  
Moreover, if $K$ has only one acute angle, then $K$ is uniquely determined by its characteristic polynomial. 
\end{prop}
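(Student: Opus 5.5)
The plan is to compare $K$ with an arbitrary kite $K'$ (edge lengths $\ell_*\le\ell_*'$, angles $\alpha_*,\gamma_*,\gamma_*'$) satisfying $P_{K'}=P_K$, and to use Equation~\eqref{eq:char_kite} in three stages: first recover the edge lengths, then recover the vector $(c(\alpha),c(\gamma),c(\gamma'))$ up to a global sign, and finally use the kite geometry to turn each sign choice into at most one kite.

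\textbf{Step 1: edge lengths.} Since $K$ has no odd angles, all three coefficients $2c(\alpha)c(\gamma)$, $2c(\alpha)c(\gamma')$ and $c(\alpha)^2$ in \eqref{eq:char_kite} are nonzero. The hypotheses $\ell\neq\ell'$ and $\ell'\neq 2\ell$ make the frequencies $2\ell'$, $2\ell$ and $2(\ell'-\ell)$ distinct and nonzero. All of them are also different from $1$, since $2\ell'<1$ by the triangle inequality.

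So $P_K-\cos(t)$ has exactly three independent nonconstant cosine terms. By contrast, $P_{K'}-\cos(t)$ has at most three, and strictly fewer if $K'$ has an odd angle (an odd $\alpha_*$ kills two terms, an odd $\gamma_*$ or $\gamma_*'$ kills one), or if two of its frequencies coincide or one vanishes. Hence $K'$ also has no odd angles, and its frequency set equals $\{2\ell,2\ell',2(\ell'-\ell)\}$.

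The maximal frequency is $2\ell'$ for both kites, so $\ell_*'=\ell'$ and $\ell_*=\frac12-\ell'=\ell$. Matching coefficients frequency by frequency then gives
\[
c(\alpha_*)^2=c(\alpha)^2,\qquad c(\alpha_*)c(\gamma_*)=c(\alpha)c(\gamma),\qquad c(\alpha_*)c(\gamma_*')=c(\alpha)c(\gamma').
\]
Since $c(\alpha)\neq0$, this yields $(c(\alpha_*),c(\gamma_*),c(\gamma_*'))=\epsilon\,(c(\alpha),c(\gamma),c(\gamma'))$ with $\epsilon=\pm1$.

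\textbf{Step 2: one kite per sign.} Fix the edge lengths and parametrize kites by the length $d$ of the diagonal joining the two $\alpha$-vertices. Then
\[
\gamma=2\arcsin\!\left(\frac{d}{2\ell}\right),\qquad \gamma'=2\arcsin\!\left(\frac{d}{2\ell'}\right),
\]
both strictly increasing in $d$, and $\alpha=\pi-(\gamma+\gamma')/2$ is strictly decreasing. By Lemma~\ref{lem:angle determines}, two kites with these edges are congruent as soon as they share one angle.

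Suppose two kites with the same edges have the same exact vector $(c(\alpha),c(\gamma),c(\gamma'))$ but diagonals $d_1<d_2$. Every kite has an angle $\ge\pi/2$, since the angles sum to $2\pi$. Moreover, the case of four right angles would force $\ell=\ell'$, which is excluded.

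On $(\frac\pi2,\pi)$ the function $c$ itself is injective. Indeed, $\frac{\pi^2}{2\alpha}$ ranges over $(\frac\pi2,\pi)$ there, where cosine is monotone, and $c$ is negative. So if the same angle slot is obtuse in both kites, the angles agree and the kites are congruent. Otherwise, by monotonicity, the obtuse slot switches from $\alpha$ (at $d_1$) to $\gamma$ (at $d_2$). Then the sign information $c<0$ on obtuse angles, together with the monotone behaviour of $c$ on the intervals $(\frac{\pi}{m+1},\frac{\pi}{m})$ from Lemma~\ref{lem: |c|}, has to be played against the constraint $2\alpha+\gamma+\gamma'=2\pi$ to derive a contradiction.

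I expect this case analysis to be the main obstacle. I would first try to show that the angle $\gamma$, the largest non-$\alpha$ angle, lies in $(\frac\pi3,\pi)$ for both kites, where $|c|$ is two-to-one with controlled signs. I would then pin it down using the sign of $c(\gamma)$. Granting this, each $\epsilon$ yields at most one kite, so at most one kite other than $K$ shares $P_K$.

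\textbf{Step 3: the case of one acute angle.} If $K$ has only one acute angle, that angle must be $\gamma'$, because $\alpha$ occurs twice. So $\alpha,\gamma\in[\frac\pi2,\pi)$ and $c(\alpha),c(\gamma)<0$, using $c(\frac\pi2)=-1$.

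The choice $\epsilon=+1$ returns $K$. For $\epsilon=-1$ we get $c(\alpha_*),c(\gamma_*)>0$, so neither $\alpha_*$ nor $\gamma_*$ is obtuse. Since $\gamma_*\ge\gamma_*'$, the kite $K'$ would then have no angle $\ge\pi/2$ except possibly right angles, and $c(\pi/2)=-1<0$ rules those out. This contradicts the angle sum $2\pi$, which forces some angle $\ge\pi/2$. Hence $K$ is uniquely determined.
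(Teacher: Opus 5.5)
Your Step 1 is fine; note that an odd $\alpha_*$ kills all three cosine terms, not two, but the conclusion is unaffected.

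\textbf{The gap is Step 2.} You leave the ``slot-switching'' case open. More seriously, the claim you are trying to prove there is false: two kites with the same edge lengths and the same signed vector $(c(\alpha),c(\gamma),c(\gamma'))$ need not be congruent. So ``one kite per sign'' cannot be established. Your proposed mechanism also cannot work. Since $c<0$ on all of $(\frac{\pi}{3},\pi)\subset\A^-$, the sign of $c(\gamma)$ does not separate the two preimages of $|c(\gamma)|$ in $(\frac\pi3,\frac\pi2)$ and $(\frac\pi2,\pi)$.

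Concretely, impose $\frac{\pi^2}{2\alpha_1}+\frac{\pi^2}{2\alpha_2}=2\pi$, $\frac{\pi^2}{2\gamma_1}+\frac{\pi^2}{2\gamma_2}=2\pi$ and $\frac{\pi^2}{2\gamma_1'}+\frac{\pi^2}{2\gamma_2'}=4\pi$. Add the angle sums and a common ratio $\sin(\gamma_i'/2)/\sin(\gamma_i/2)=\ell/\ell'$. A numerical intermediate-value argument along the resulting one-parameter family then produces two kites with approximately $(\alpha_1,\gamma_1,\gamma_1')\approx(0.730,\,0.351,\,0.189)\pi$ and $(\alpha_2,\gamma_2,\gamma_2')\approx(0.380,\,0.871,\,0.369)\pi$, and $\ell/\ell'\approx 0.56$. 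They have no odd angles, $\ell'\neq\ell,2\ell$, and identical signed $c$-vectors. They also satisfy $s(\gamma_1)s(\gamma_1')=s(\gamma_2)s(\gamma_2')$, so their characteristic polynomials coincide. They correspond to the same $\epsilon$ and are exactly your slot-switching configuration. The proposition survives only because $\epsilon=-1$ yields no kite here.

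\textbf{The repair} is to split by which angle is obtuse rather than by sign. This is the paper's argument, and it uses only facts you already have.
Any $K'$ with $P_{K'}=P_K$ has the same $\ell,\ell'$, and satisfies $|c(\alpha_*)|=|c(\alpha)|$ and $|c(\gamma_*)|=|c(\gamma)|$. Not being a square, $K'$ has $\alpha_*$ or $\gamma_*$ in $(\frac\pi2,\pi)$, where $|c|$ is injective. Hence $K'$ is one of at most two kites, each determined by Lemma~\ref{lem:angle determines}.

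\textbf{For the ``moreover'' part,} your $\epsilon=-1$ argument is correct. However, ``$\epsilon=+1$ returns $K$'' must not be taken from Step 2. It holds directly: $\alpha,\gamma\in[\frac\pi2,\pi)$, where $c$ is injective, so whichever of $\alpha_*,\gamma_*$ is obtuse equals the corresponding angle of $K$. With the slot decomposition, this same observation gives uniqueness at once, since both candidates are $K$ itself; this is how the paper concludes.
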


\begin{proof}
Since $K$ has no odd angles and we have excluded coincidences among $\ell$, $\ell'$, and $\ell'-\ell$, there are three distinct cosine terms in $P_K(t)-\cos(t)$ as given by \eqref{eq:char_kite}.  The largest cosine frequency is $2\ell'$. Thus the characteristic polynomial determines $\ell'$ and hence also $\ell=\frac 1 2 -\ell'$.  

The coefficients of the cosine terms enable us to read off first $|c(\alpha)|$ and then $|c(\gamma)|$ and $|c(\gamma')|$.   Every quadrilateral that is not a rectangle (in particular, every kite that is not a square) has at least one angle greater than $\frac{\pi}{2}$.    Thus since $\gamma' \leq \gamma$, at least one of $\gamma$  or $\alpha$ exceeds $\frac{\pi}{2}$.  From the fact that the map $\beta\mapsto |c(\beta)|$ is injective on $(\frac{\pi}{2},\pi)$, we can therefore determine one of $\alpha$ or $\gamma$ from the characteristic polynomial.   The first statement now follows from Lemma~\ref{lem:angle determines}.   If $\alpha$ and $\gamma$ both exceed $\frac{\pi}{2}$ then the two resulting possibilities must coincide, so the kite is uniquely determined.
\end{proof}

Next we consider kites with one odd angle.  Since angles in a rhombus come in pairs, such a kite cannot be a rhombus.

\begin{prop}\label{kites_one_odd_angle}
Let $K$ be a kite of perimeter one with exactly one odd angle and suppose that $(\ell, \ell')\neq(\frac{1}{6}, \frac{1}{3})$. Then at most two other kites have the same characteristic polynomial as $K$. Moreover:
\begin{enumerate}
\item[(a)] If the odd angle of $K$ is the larger of the two opposite non-congruent angles and $(\ell,\ell')\neq \left(\frac{1}{10}, \frac{2}{5}\right)$, then $K$ is uniquely determined within the set of all kites by its characteristic polynomial.
\item[(b)] If the odd angle is the smaller of the two opposite non-congruent angles and $(\ell,\ell')\neq \left(\frac 1 5, \frac{3}{10}\right)$, then at most one other kite has the same characteristic polynomial as $K$.   
\end{enumerate}

\end{prop}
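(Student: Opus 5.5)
The plan is to read off, from \eqref{eq:char_kite}, exactly what $P_K$ records when $K$ has one odd angle. We then show that any kite $K'$ with $P_{K'}=P_K$ also has exactly one odd angle, and count the ways the data can be reinterpreted.

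\textbf{Step 1: form of $P_K$.} Since $\alpha$ occurs twice, the unique odd angle is $\gamma$ or $\gamma'$, so $c(\gamma)c(\gamma')=0$ while $c(\alpha)\neq 0$. If $\gamma$ is odd, then
\[P_K(t)=\cos t+2c(\alpha)c(\gamma')\cos(2\ell t)+c(\alpha)^2\cos((1-4\ell)t)\pm s(\alpha)^2 s(\gamma'),\]
where we use $2(\ell'-\ell)=1-4\ell$. If $\gamma'$ is odd, the same formula holds with $c(\gamma')\cos(2\ell t)$ replaced by $c(\gamma)\cos(2\ell' t)$. A kite with one odd angle is not a rhombus, so $\ell<\ell'$. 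The hypothesis $(\ell,\ell')\neq(\frac16,\frac13)$ makes the two frequencies distinct and nonzero. Hence $P_K-\cos t$ has exactly two nonconstant cosine terms, and one of them has positive coefficient $c(\alpha)^2$.

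\textbf{Step 2: $K'$ has exactly one odd angle.} Suppose $K'$ had two odd angles. These must be $\gamma'_{K'}$ and $\gamma_{K'}$, so only the $c(\alpha)^2$ term survives, giving one cosine term, a contradiction. Suppose instead $K'$ had no odd angles. If $K'$ is generic, it has three terms, again a contradiction. If $K'$ is a rhombus, the $c(\alpha)^2$ term has frequency $0$ and is absorbed, leaving one term. The remaining case is $K'$ with lengths $(\frac16,\frac13)$, where the frequencies are contained in $\{\frac13,\frac23\}$. One checks that $\{2\ell\text{ or }2\ell',\,1-4\ell\}=\{\frac13,\frac23\}$ forces $\ell=\frac16$, which is excluded. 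Hence $K'$ has exactly one odd angle and $P_{K'}$ has the form in Step 1.

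\textbf{Step 3: counting interpretations.} For $K'$ we must decide which observed frequency is $1-4\ell_{K'}$. There are two choices, and the coefficient of that term must be positive. Once this is fixed, $\ell_{K'}$ and $\ell'_{K'}=\frac12-\ell_{K'}$ are known. The other frequency then equals either $2\ell_{K'}$ or $2\ell'_{K'}$, and this tells whether $\gamma_{K'}$ or $\gamma'_{K'}$ is odd; the two cannot coincide since $\ell_{K'}\ne\ell'_{K'}$. From the coefficients we learn $|c(\alpha_{K'})|$ and $|c|$ of the non-odd angle among $\gamma_{K'},\gamma'_{K'}$.
\begin{itemize}
\item If the larger angle $\gamma_{K'}$ is odd, then $\gamma'_{K'}\le\gamma_{K'}\le\frac{\pi}{3}$, so $\alpha_{K'}>\frac{2\pi}{3}$ is obtuse. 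By Lemma~\ref{lem: |c|}(c), $\alpha_{K'}$ is then determined, and Lemma~\ref{lem:angle determines} gives a unique kite.
\item If $\gamma'_{K'}$ is odd, then one of $\alpha_{K'},\gamma_{K'}$ is obtuse, since a kite with an odd angle is not a square. This leaves at most two kites, by Lemma~\ref{lem: |c|}(c) and Lemma~\ref{lem:angle determines}.
\end{itemize}
Of the two frequency assignments, one is $K$'s own. For the swapped assignment, case (a) of $K$ requires solving $1-4\ell''=2\ell$ and then $1-4\ell\in\{2\ell'',1-2\ell''\}$. This gives $\ell=\frac16$ or $\ell=\frac1{10}$, both excluded, so in case (a) only $K$'s own assignment survives and $K$ is unique. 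In case (b), the analogous computation with $2\ell'$ in place of $2\ell$ shows the swap is possible only for $(\ell,\ell')=(\frac15,\frac3{10})$ (or $\ell=\frac16$). This leaves only $K$'s own assignment and its at most two kites, i.e.\ at most one other kite. In general the bound is at most $1+2=3$ kites in total.

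The main obstacle is Step 2, ruling out degenerate comparison kites with no odd angles whose terms merge or cancel. The finite frequency check there must be done carefully, together with the sign constraint that the $c(\alpha)^2$ coefficient be positive.
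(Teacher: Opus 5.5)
Your proof is correct and follows essentially the same route as the paper. You use the count of surviving cosine terms to force any comparison kite to have exactly one odd angle, bound the kites of each odd-angle type using injectivity of $|c|$ on obtuse angles together with the fact that edge lengths plus one angle determine a kite, and match frequencies across the two types to isolate the exceptional pairs $(\frac{1}{10},\frac{2}{5})$ and $(\frac{1}{5},\frac{3}{10})$. Two small slips are worth fixing: in Step 2 the two odd angles could also be the two copies of $\alpha$, which leaves no cosine term at all, so you still get a contradiction; and the parenthetical ``or $\ell=\frac{1}{6}$'' in case (b) is not a solution of the swap equations, which force $\ell=\frac{1}{5}$ or $\ell=0$, though this does not affect the conclusion.
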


\begin{proof}
The odd angle in $K$ is either $\gamma$ or $\gamma'$.  In both cases, $P_K(t) - \cos t$ has exactly two distinct cosine terms:
\beq\label{eq:g} P_K(t)-\cos(t)=2c(\alpha)c(\gamma')\cos(2\ell t)\,+\, c(\alpha)^2\cos\left(2|\ell'-\ell|t\right) \,+\, \rm
{constant}
\eeq
if the odd angle is $\gamma$, and
\beq\label{eq:g'} P_K(t)-\cos(t)=2c(\alpha)c(\gamma)\cos(2\ell' t)\,+\, c(\alpha)^2\cos\left(2|\ell'-\ell|t\right) \,+\, \rm
{constant}.\eeq
if the odd angle is $\gamma'$.

The requirement that $P_{K}(t)-\cos(t)$ has exactly two distinct cosine frequencies, together with our exclusion of the case $(\ell, \ell')=(\frac{1}{6}, \frac{1}{3})$, means that any kite with the same characteristic polynomial as $K$ must also have exactly one odd angle.  We consider the two possibilities in turn.  First, suppose the odd angle is the larger of the two non-congruent angles.  Then summing the cosine frequencies on the right side of Equation \eqref{eq:g}, we obtain $2\ell'$. We determine $\ell'$ and then $\ell = \frac{1}{2}-\ell'$. Since the odd angle is the larger of the two non-congruent angles, the repeated angle $\alpha$ must be obtuse and hence is determined by $|c(\alpha)|$. Lemma~\ref{lem:angle determines} then shows that within the class of kites with one odd angle that is the larger of the two non-congruent angles, each kite is uniquely determined by its characteristic polynomial.  

Next, suppose the odd angle is the smaller of the two non-congruent angles.  Then summing the cosine frequencies on the right side of Equation \eqref{eq:g'}, we obtain $4\ell'-2\ell$.  We add $2\ell'+2\ell=1$ and again determine $\ell'$ and hence $\ell$.  We have $\alpha >\frac{\pi}{3}$ and the coefficient of the final cosine term yields $\alpha$ up to two possibilities.    We can then apply Lemma~\ref{lem:angle determines} to conclude that within the class of kites with one odd angle that is the smaller of the two non-congruent angles, each kite is determined up to two possibilities by its characteristic polynomial.  A priori, a kite with the same characteristic polynomial as $K$ may not have the same type of odd angle as $K$, meaning that there could be at most three kites with exactly one odd angle and the same characteristic polynomial.

To prove (a) and (b), it remains only to show that, with the exception of the kites excluded by the given hypotheses, the characteristic polynomial of a kite with one odd angle determines whether the odd angle is the larger or smaller of the two opposite non-congruent angles.   Let $K_1$, respectively $K_2$, be kites with odd angle that is larger, respectively smaller, than its opposite angle, and denote the edge lengths of $K_j$ by $(\ell_j, \ell'_j)$  for $j=1,2$.  Then $P_{K_1}(t) - \cos t$ is given by Equation~\eqref{eq:g} and $P_{K_2}(t) - \cos t$ is given by Equation~\eqref{eq:g'}.  In order for these to be equal and taking into account that $2\ell_1 < \frac 1 2 < 2\ell_2'$, we must have $\ell_1=\ell_2'-\ell_2$ and $\ell_2'=\ell_1'-\ell_1$.  Since both kites have perimeter one, it follows that $(\ell_1, \ell_1')=\left(\frac{1}{10}, \frac{2}{5}\right)$ and $( \ell_2, \ell_2') =\left(\frac 1 5, \frac{3}{10}\right)$, completing the proof.
\end{proof}

The case of kites with two odd angles is more subtle.  If a kite $K$ has two equal odd angles, then these are necessarily the angles $\alpha$ and the only cosine term in $P_K(t)$ is $\cos (t)$.  Thus $P_K(t)$  provides very little information about the geometry of $K$, and we will not study such kites further.   If $K$ is a kite with two unequal odd angles $\gamma$ and $\gamma'$, then $P_K(t) - \cos (t)$ retains one cosine term and hence more information; in particular, 
\beq
P_K(t)-\cos(t)=c(\alpha)^2\cos\left(2|\ell'-\ell|t\right) \,+\, \text{constant}.
\label{eq:char_2unequalodd}
\eeq
However, there are certain length pairs that cause the cosine frequency $2|\ell'-\ell|$ to match those of certain kites with no or exactly one odd angle:
\begin{enumerate}[(a)]
\item If $K$ satisfies $(\ell, \ell') = (\frac{1}{6}, \frac{1}{3})$, then it could have the same cosine frequencies as a kite $K'$ in which $\gamma$ is the unique odd angle, $(\ell, \ell')=(\frac{1}{6}, \frac{1}{3})$ and $c(\gamma')\neq -\frac{1}{2}c(\alpha)$:
$$P_{K'}(t)-\cos(t)=[2c(\alpha)c(\gamma') + c(\alpha)^2]\cos(t/3) \,+\, \text{constant}.$$
\item If $K$ satisfies $(\ell, \ell') = (\frac{1}{12}, \frac{5}{12})$, then it could have the same cosine frequencies as a kite $K'$ in which there are no odd angles, $(\ell, \ell')=(\frac{1}{6}, \frac{1}{3})$ and $c(\gamma')= -\frac{1}{2}c(\alpha)$:
$$P_{K'}(t)-\cos(t)=2c(\alpha)c(\gamma)\cos(2t/3) \,+\, \text{constant}.$$
\item If $K$ satisfies $(\ell, \ell') = (\frac{1}{8}, \frac{3}{8})$, then it could have the same cosine frequencies as a rhombus $K'$ with no odd angles:
$$P_{K'}(t)-\cos(t)=2c(\alpha)[c(\gamma)+c(\gamma')]\cos(t/2)\,+\, \text{constant}.$$    
\end{enumerate}
We will exclude these possibilities from our consideration of kites with unequal odd angles.

\begin{prop}\label{prop:kites_two_unequal_odd} Let $K$ be a kite of perimeter one with two unequal odd angles.   Suppose its lengths satisfy 
$(\ell,\ell')\notin\{(\frac{1}{6}, \frac{1}{3}), (\frac{1}{12}, \frac{5}{12}),(\frac{1}{8}, \frac{3}{8})\}$.  Then $K$ is uniquely determined amongst all kites by its characteristic polynomial.  
\end{prop}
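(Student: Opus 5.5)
We argue that any kite $K'$ with $P_{K'}=P_K$ must also have two unequal odd angles, and then read off its edge lengths and its repeated angle $\alpha$ from $P_{K'}$. Since $K$ has two unequal odd angles, these are $\gamma$ and $\gamma'$, and $P_K$ is given by \eqref{eq:char_2unequalodd}. A kite has at most two odd angles, so $\alpha$ is not odd and $c(\alpha)^2\neq 0$. Hence $P_K(t)-\cos(t)$ consists of exactly one nonzero cosine term, with frequency $2|\ell'-\ell|$ and coefficient $c(\alpha)^2$. This frequency is nonzero: if it were zero, $K$ would be a rhombus, forcing $\gamma=\gamma'$.

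\textbf{Step 1: $K'$ has two unequal odd angles.} We go through the other possibilities for $K'$ using \eqref{eq:char_kite}, \eqref{eq:g} and \eqref{eq:g'}.
\begin{itemize}
\item \emph{Two equal odd angles.} Then both odd angles are $\alpha'$, so $P_{K'}-\cos(t)$ is constant. This is impossible.
\item \emph{Exactly one odd angle, namely $\gamma'$.} By \eqref{eq:g'} the frequencies $2\ell'$ and $2(\ell'-\ell)$ are distinct because $\ell>0$. Both coefficients are nonzero: $c(\alpha')$ and $c(\gamma)$ are nonzero since a kite has at most two odd angles and they are opposite. So there would be two cosine terms, which is impossible.
\item \emph{Exactly one odd angle, namely $\gamma$.} By \eqref{eq:g} the frequencies $2\ell$ and $2(\ell'-\ell)$ are distinct unless $(\ell,\ell')=(\frac16,\frac13)$. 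In that case the single frequency is $\frac13$. Matching it with $2|\ell'-\ell|$ for $K$ forces $K$ to have lengths $(\frac16,\frac13)$, which is excluded.
\item \emph{No odd angles.} The three coefficients $2c(\alpha)c(\gamma)$, $2c(\alpha)c(\gamma')$ and $c(\alpha)^2$ are all nonzero. To leave only one cosine term, frequencies must coincide, and there are only two ways:
\begin{itemize}
\item $K'$ is a rhombus. Then the single frequency is $\frac12$, which forces $K$ to have lengths $(\frac18,\frac38)$, excluded.
\item $K'$ has lengths $(\frac16,\frac13)$. The two terms of frequency $\frac13$ must cancel, leaving the frequency $\frac23$. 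This forces $K$ to have lengths $(\frac1{12},\frac5{12})$, excluded.
\end{itemize}
\end{itemize}
These are exactly the cases (a)–(c) listed before the proposition. So $K'$ has two unequal odd angles.

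\textbf{Step 2: the edge lengths of $K'$ agree with those of $K$.} By Step 1, $P_{K'}$ also has the form \eqref{eq:char_2unequalodd}. Matching frequencies gives $|\ell'-\ell|$. Together with $\ell+\ell'=\frac12$ and the convention $\ell\le\ell'$, this determines $(\ell,\ell')$. Matching coefficients gives $c(\alpha)^2$, hence $|c(\alpha)|$.

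\textbf{Step 3: $\alpha$ is determined.} Here we use the angle sum. The odd angles $\gamma,\gamma'$ are distinct odd angles, so
\[
\gamma+\gamma'\le \frac{\pi}{3}+\frac{\pi}{5}=\frac{8\pi}{15}.
\]
Since $2\alpha+\gamma+\gamma'=2\pi$, this gives $\alpha\ge \frac{11\pi}{15}>\frac{\pi}{2}$, and the same bound holds for $\alpha'$. By Lemma~\ref{lem: |c|}(c), $|c|$ is injective on obtuse angles, so $|c(\alpha)|=|c(\alpha')|$ gives $\alpha=\alpha'$.

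\textbf{Step 4: conclusion.} $K$ and $K'$ have the same edge lengths and the common angle $\alpha$, so Lemma~\ref{lem:angle determines} gives $K\cong K'$. The only delicate part of the argument is the case analysis in Step 1. Steps 2–4 are direct.
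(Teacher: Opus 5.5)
Your proposal is correct and follows the paper's proof: you read off $(\ell,\ell')$ from the single frequency $2|\ell'-\ell|$ and $|c(\alpha)|$ from its coefficient, note that $\alpha$ is obtuse so that $|c|$ determines it, and conclude with Lemma~\ref{lem:angle determines}. Your Step 1 makes explicit the case analysis (a)--(c) that the paper carries out in the discussion just before the proposition. The paper's short proof uses that analysis without restating it.
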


\begin{proof}
The characteristic polynomial of $K$ as given in \eqref{eq:char_2unequalodd} determines $\ell'-\ell$ and hence the pair $(\ell,\ell')$.   We can also read off $|c(\alpha)|$. Since $\gamma$ and $\gamma'$ are both odd, $\alpha$ must be obtuse and hence $\alpha$ is uniquely determined from $|c(\alpha)|$.   We can thus apply Lemma~\ref{lem:angle determines} to complete the proof.
\end{proof}

We conclude our discussion of kites by considering some of the equilateral ones.

\begin{thm} \label{th:rhombus}
 Let $K$ be a rhombus with no odd angles.  Then there are at most two other kites with the same characteristic polynomial.
\end{thm}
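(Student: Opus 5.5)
We work from the rhombus characteristic polynomial \eqref{eq:char rhombus}. Take $K$ to be a rhombus with angles $\al_1,\al_2$ (so $\al_1+\al_2=\pi$), neither of them odd. Then
\[
P_K(t)-\cos(t)=4c(\al_1)c(\al_2)\cos(t/2)+2c(\al_1)^2+2c(\al_2)^2-1,
\]
and the coefficient $4c(\al_1)c(\al_2)$ is nonzero. Let $K'$ be any kite with $P_{K'}=P_K$. The plan is to find which kind of kite $K'$ can be by comparing cosine frequencies, and then to count the possibilities in each case.

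\textbf{Step 1: restrict the type of $K'$.} The polynomial $P_{K'}(t)-\cos(t)$ must contain exactly one nonconstant frequency, namely $1/2$. We go through the cases for $K'$ using \eqref{eq:char_kite}.
\begin{itemize}
\item Suppose $K'$ has no odd angles and is not a rhombus. If $\ell'\neq 2\ell$, there are three distinct frequencies, which is impossible. If $(\ell,\ell')=(\frac16,\frac13)$, the surviving frequencies are among $2/3$ and $1/3$, and neither equals $1/2$. So this case is excluded.
\item Suppose $K'$ has exactly one odd angle. By \eqref{eq:g} and \eqref{eq:g'}, the term $c(\alpha)^2\cos(2|\ell'-\ell|t)$ with $c(\alpha)\neq 0$ is always present. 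A second frequency, $2\ell$ or $2\ell'$, is also present unless it coincides with $2|\ell'-\ell|$. For the only frequency to be $1/2$ we would need both $2|\ell'-\ell|=1/2$ and $2\ell=2(\ell'-\ell)$, which is inconsistent with perimeter one. So this case is excluded.
\item Suppose $K'$ has two unequal odd angles. By \eqref{eq:char_2unequalodd}, the only frequency is $2(\ell'-\ell)$, so we need $2(\ell'-\ell)=1/2$, giving $(\ell,\ell')=(\frac18,\frac38)$. This is exactly exceptional case (c).
\item Suppose $K'$ has two equal odd angles. Then there is no $\cos(t/2)$ term, so this case is excluded.
\end{itemize}
So $K'$ is either a rhombus with no odd angles, or a kite with edge lengths $(\frac18,\frac38)$ and two unequal odd angles $\gamma,\gamma'$.

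\textbf{Step 2: count the rhombi.} Suppose $K'$ is a rhombus. Matching coefficients as in the proof of Theorem~\ref{thm:parallelograms}(a) determines $(c(\al_1'),c(\al_2'))$ up to order and a global sign. Since $K$ is not a square (a square has $P=\cos t+4\cos(t/2)+3$, and Theorem~\ref{thm:rectangle} handles that case), one angle is obtuse and is determined by its $|c|$-value. This gives at most two possible obtuse angles, so at most one rhombus other than $K$.

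\textbf{Step 3: count the odd-angled kites.} Suppose $K'$ has edge lengths $(\frac18,\frac38)$ and odd angles $\gamma,\gamma'$. Its repeated angle $\alpha$ is obtuse. Matching the $\cos(t/2)$ coefficient gives $c(\alpha)^2=4c(\al_1)c(\al_2)$. Since $c(\alpha)^2$ is a known number, the injectivity of $|c|$ on obtuse angles (Lemma~\ref{lem: |c|}) determines $\alpha$. Lemma~\ref{lem:angle determines} then determines $K'$. So there is at most one such kite.

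Adding up, there are at most two kites other than $K$ with the same characteristic polynomial, which proves the theorem.

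The main obstacle is the exhaustive frequency comparison in Step 1. It requires checking every coincidence pattern among $2\ell$, $2\ell'$ and $2|\ell'-\ell|$, and for each odd-angle configuration. It also requires ruling out cancellation of coefficients: for example, a non-rhombic kite with $(\ell,\ell')=(\frac16,\frac13)$ in which terms merge. In each subcase we must confirm that the only frequency left can be $1/2$ exactly when one of the two admitted configurations holds.
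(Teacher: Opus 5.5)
Your proposal is correct and follows the paper's proof: you reduce $K'$ to another rhombus with no odd angles (at most one, via Theorem~\ref{thm:parallelograms}(a)) or a kite with unequal odd angles and $(\ell,\ell')=(\frac18,\frac38)$ (at most one), and your Step 1 spells out the frequency comparison that the paper cites only as ``the preceding discussion.'' The one small difference is in Step 3: you read off $\alpha$ directly from $|c(\alpha)|$ using injectivity of $|c|$ on obtuse angles, whereas the paper combines the $\cos(t/2)$ coefficient with the constant term to recover $\cos(\pi^2/\alpha)$; your route is slightly simpler and equally valid.
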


\begin{proof} 
In Theorem~\ref{thm:rectangle}, we proved that squares are uniquely determined by their characteristic polynomial among all convex quadrilaterals and triangles.  So, assume that $K$ is not a square.  From the preceding discussion, the only kites that might have the same characteristic polynomial as $K$ are other rhombi with no odd angles, and kites with unequal odd angles $\gamma$ and $\gamma'$ and $(\ell, \ell')=(\frac{1}{8}, \frac{3}{8})$.  We have already shown in Theorem~\ref{thm:parallelograms}(a) that there is at most one other rhombus with no odd angles with the same characteristic polynomial as $K$.

The constant term for a kite with unequal odd angles $\gamma$ and $\gamma'$ and $(\ell, \ell')=(\frac{1}{8}, \frac{3}{8})$ is $-s(\al)^2s(\gamma)s(\gamma')=\pm s(\al)^2$.  The coefficient of the $\cos (t/2)$ term is $c(\al)^2$.  Thus the quantities $c(\al)^2 \pm s(\al)^2$ are determined by the characteristic polynomial: for one sign choice the value is one, and for the other sign choice the value is $\cos(\frac{\pi^2}{\alpha})$.  Since $\alpha$ is necessarily obtuse, we have $\frac{\pi^2}{\al} \in (\pi, 2\pi)$, and $\cos \theta$ is injective on $(\pi, 2\pi)$.  Hence we recover $\alpha$ and there is one possible non-equilateral kite with the same characteristic polynomial as $K$.   
\end{proof}

\section{Regular polygons} \label{s:regular} 
We now focus on regular polygons. The following notation will be useful.

\begin{nota}\label{nota:A}
We define the following subsets of $(0,\pi)$:
\[\A^-:=\{\alpha\in (0,\pi):  c(\al)<0\}=\bigsqcup_{j=1}^\infty\, \left(\frac{\pi}{4j-1}, \frac{\pi}{4j-3}\right)=\left(\frac{\pi}{3},\pi\right)\cup \left(\frac{\pi}{7},\frac{\pi}{5}\right)\cup\dots\]
\[ \A^+:=\{\alpha\in (0,\pi):  c(\al)>0\}=\bigsqcup_{j=1}^\infty\, \left(\frac{\pi}{4j+1}, \frac{\pi}{4j-1}\right)= \left(\frac{\pi}{5},\frac{\pi}{3}\right)\cup \left(\frac{\pi}{9},\frac{\pi}{7}\right)\cup\dots\]
and 
\[\A^0:=\{\alpha\in (0,\pi):  c(\al)=0\} = \{\rm{odd\,\,angles}\}.\]
\end{nota}
We have \[(0,\pi) =\A^-\sqcup\A^+\sqcup\A^0.\]
We emphasize that $\A^-$ is the largest of these three sets; indeed, $\left(\frac{\pi}{3},\pi\right)\subset \A^-$.   Moreover, for any convex polygon $\Om$ other than an equilateral triangle, at most two interior angles of $\Om$ can lie outside of $\A^-$.    As noted in Definition~\ref{def: char poly}, every $a_{[\bxi]}$ is either an empty product (so equals one) or else is a product of an even number of factors $c(\alpha_j)$.   Thus in the case of convex polygons all of whose angles lie in $\A^-$, every $a_{[\bxi]}>0$ and cancellation cannot occur in the second bulleted item of Remark~\ref{ss:cpguide}. 

\begin{exa}\label{exa:equilateral} Let $\pc_n^-$ denote the set of all convex $n$-gons that have all angles in $\A^-$.   For any convex $n$-gon $\Om$ (whether or not in $\pc_n^-$), the maximum possible cosine frequency occurring in $P_{\Om}(t)-\cos(t)$ is $1-2\ell_{\min}(\Om)$, where $\lmin(\Om)$ is the smallest edge length of $\Om$.   For $\Om\in\pc_n^-$, this cosine frequency necessarily occurs.   Thus within $\pc_n^-$, the characteristic polynomial of $\Om$ determines $\lmin(\Om)$.   In particular, the characteristic polynomial determines which elements of $\pc_n^-$ are equilateral, since for any $n$-gon of perimeter one, we have $\lmin(\Om)\leq\frac{1}{n}$ with equality only if $\Om$ is equilateral.    Moreover, equilateral elements of $\pc_n^-$ are distinguishable by their characteristic polynomials from all elements of $\displaystyle\bigcup_{m=n+1}^\infty\,\pc_m^-$.   
\end{exa}

We can apply Example \ref{exa:equilateral} to the class of regular $n$-gons.

\begin{thm} \label{th:regular}
    Within the class of regular $n$-gons, each element is uniquely determined by its characteristic polynomial. 
\end{thm}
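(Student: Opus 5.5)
Normalize perimeter to one as in Convention~\ref{conv:perim}. For fixed $n$, any two regular $n$-gons of perimeter one are congruent, so the statement reduces to showing that the characteristic polynomial determines $n$. I would separate the case $n=3$ from $n\ge 4$ and, for $n\ge 4$, invoke Example~\ref{exa:equilateral}.

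\textbf{Case $n\ge 4$.} The interior angle of a regular $n$-gon is $\frac{(n-2)\pi}{n}\ge\frac{\pi}{2}>\frac{\pi}{3}$. Hence every angle lies in $(\frac{\pi}{3},\pi)\subset\A^-$, and the regular $n$-gon belongs to $\pc_n^-$. By Example~\ref{exa:equilateral}, the largest cosine frequency in $P_\Om(t)-\cos(t)$ actually occurs and equals $1-2\lmin(\Om)=1-\frac{2}{n}$. The reason it cannot cancel is that all coefficients $a_{[\bxi]}$ are positive when all angles lie in $\A^-$. This frequency is strictly increasing in $n$, so it recovers $n$. Equivalently, one can quote the last sentence of Example~\ref{exa:equilateral}: an equilateral element of $\pc_n^-$ is distinguished from all elements of $\pc_m^-$ with $m>n$.

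\textbf{Case $n=3$.} The equilateral triangle has three odd angles $\frac{\pi}{3}$, so it is not in $\pc_3^-$. Here I would compute directly from \eqref{eq:triangle_charpoly}. All $c(\frac{\pi}{3})=0$ and $s(\frac{\pi}{3})=\sin(\frac{3\pi}{2})=-1$, so $P(t)=\cos(t)+1$. For a regular $n$-gon with $n\ge4$, the previous paragraph shows that $P_\Om(t)-\cos(t)$ contains a cosine term with nonzero frequency $1-\frac{2}{n}>0$ and positive coefficient. By linear independence of cosines with distinct frequencies, $P_\Om\neq\cos(t)+1$. Therefore the equilateral triangle is also distinguished.

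The only step needing care is the non-cancellation of the top frequency $1-\frac{2}{n}$. Several $\bxi$ with a single sign flip produce it, and their coefficients could in principle cancel. This is exactly where membership in $\A^-$ (all $a_{[\bxi]}>0$) is used, and it is supplied by the discussion preceding Example~\ref{exa:equilateral}. Beyond that, the argument is routine.
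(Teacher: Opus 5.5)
Your proposal is correct and matches the paper's proof. For $n\ge 4$ you place the regular $n$-gon in $\pc_n^-$ and invoke Example~\ref{exa:equilateral}: the top frequency $1-\frac{2}{n}$ in $P_\Om(t)-\cos(t)$ survives because every $a_{[\bxi]}>0$. You then separate the equilateral triangle by the absence of any cosine term other than $\cos(t)$ in its polynomial $\cos(t)+1$.
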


\begin{proof}
For $n\neq 3$, the regular $n$-gon lies in $\pc_n^-$. Thus by Example~\ref{exa:equilateral}, the characteristic polynomial distinguishes the regular $n$-gon from the regular $m$-gon when $n\neq m$ with both $n, m\geq 4$.   The equilateral triangle $T$ can be distinguished from the higher order regular polygons by the absence of cosine terms in $P_{T}(t) -\cos(t)$.    
\end{proof}

By Example~\ref{exa:equilateral}, regular $n$-gons are also distinguishable from those $m$-gons, $m> n$ that have all angles in $\A^-$ (including in particular all $m$-gons that have all angles $> \frac{\pi}{3}$).    We next compare regular $n$-gons to other $n$-gons. Applying Example~\ref{exa:equilateral} again, we see that they are distinguishable from non-equilateral $n$-gons that have all angles in $\A^-$.
For small $n$, we can say more:

\begin{thm}\label{th:square and pent}~For $n\in \{3,4\}$ the regular $n$-gon is uniquely determined within the set of all convex $n$-gons by its characteristic polynomial.  For $n=5$, any convex 5-gon with the same characteristic polynomial as a regular pentagon must be equilateral.
\end{thm}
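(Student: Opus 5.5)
We treat the three cases $n=3,4,5$ separately, using in each case only the explicit form of $P_\Om$ from Definition~\ref{def: char poly}.

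\emph{Case $n=3$.} The equilateral triangle $T$ has three odd angles. By Theorem~\ref{th:triangles123}(a) any triangle with the same characteristic polynomial also has three odd angles, and by part (e) it is equilateral. So this case follows directly from Theorem~\ref{th:triangles123}.

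\emph{Case $n=4$.} The square is a rectangle, so Theorem~\ref{thm:rectangle} already shows that it is uniquely determined among all convex quadrilaterals (and triangles). Nothing further is needed.

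\emph{Case $n=5$.} Let $\Om_0$ be the regular pentagon of perimeter one, with all angles $\frac{3\pi}{5}$. Then $\frac{\pi^2}{2\alpha}=\frac{5\pi}{6}$, so $c=-\frac{\sqrt3}{2}$ and $s=\frac12$. Every $a_{[\bxi]}$ is positive, since each is a product of an even number of negative factors, and the constant term is $-\frac{1}{32}$. The frequencies $|\bxi\cdot\bell|$ are $1$, $\frac35$ and $\frac15$. The largest frequency in $P_{\Om_0}(t)-\cos t$ is therefore $\frac35=1-2\cdot\frac15$, and it has coefficient $5\cdot\frac34$.

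Now let $\Om$ be a convex pentagon with $P_\Om=P_{\Om_0}$. By Example~\ref{exa:equilateral}, the maximal frequency appearing in $P_\Om(t)-\cos t$ is at most $1-2\lmin(\Om)$, and $1-2\lmin(\Om)\ge\frac35$, with equality only when $\Om$ is equilateral. Suppose $\Om$ is not equilateral, so $\lmin<\frac15$. The frequency $1-2\lmin$ is then strictly larger than $\frac35$, so it must be cancelled. This frequency comes from the sign vectors $\bxi$ that flip exactly the shortest edge (one $\bxi$ for each edge of minimal length), possibly together with other $\bxi$ giving the same value. Each such coefficient is $c(\alpha_{j-1})c(\alpha_j)$, the product over the two vertices adjacent to the flipped edge. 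Other sign vectors, flipping sets of total length $\lmin$, can share this frequency; their coefficients are products of $c$'s at the ends of the flipped blocks.

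The plan is to show this cancellation is impossible, or else to force $\Om$ into a shape that contradicts the other coefficients. A convex pentagon has angle sum $3\pi$, so at most two angles lie outside $\A^-$. First handle the case with no angle outside $\A^-$: then all coefficients are positive and no cancellation is possible, giving a contradiction. Otherwise one or two angles are $\le\frac{\pi}{3}$, and the remaining angles are large.

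In that case I would also use the constant term. It equals $-\frac1{32}$ plus the total coefficient absorbed from zero frequencies, and it must also satisfy $|\prod s(\alpha_j)|\le 1$. Combined with the sum of all coefficients, $P(0)=P_{\Om_0}(0)$, this constrains how many vanishing and cancelling terms are possible. I expect this cancellation analysis, when one or two angles are small, to be the main obstacle. The fallback is a direct case check on which edges are adjacent to the non-$\A^-$ angles, showing that a cancelling configuration cannot reproduce exactly the coefficient pattern $\frac34$ (multiplicity 5, frequency $\frac35$) and $\frac{9}{16}$ (multiplicity 10, frequency $\frac15$) of $\Om_0$.
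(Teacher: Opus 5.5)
Your cases $n=3$ and $n=4$ are fine and match the paper's. For $n=5$, however, there is a genuine gap. You only settle the case in which all angles lie in $\A^-$, which is just Example~\ref{exa:equilateral}. The case of one or two angles outside $\A^-$, which includes pentagons with odd angles, is left as an ``expected main obstacle'' with an unexecuted case check, and that case is where the content of the theorem lies.

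Your chosen handle is also weak there. A set of edges of total length $\lmin$ must be a single shortest edge, so the frequency $1-2\lmin$ carries exactly the terms $c(\alpha)c(\alpha')$ coming from the shortest edges. This sum can genuinely vanish: for instance, if the shortest edge meets an odd angle, or if products of opposite sign cancel. You would then be pushed into an open-ended descent through lower frequencies. Your coefficient bookkeeping is also off. Flipping two adjacent edges creates only two sign changes, so the $\cos(t/5)$ coefficient is $5\cdot\frac34+5\cdot\frac{9}{16}=\frac{105}{16}$, not $10\cdot\frac{9}{16}$.

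The missing idea is to exploit the \emph{size} of the coefficients rather than their signs.
\begin{enumerate}
\item Since $|a_{[\bxi]}|\le 1$, the coefficient $\frac{105}{16}>6$ requires at least $7$ classes $[\bxi]$ with $a_{[\bxi]}\neq 0$ and $|\bxi\cdot\bell|=\frac15$. Likewise $\frac{15}{4}>3$ requires at least $4$ classes with $|\bxi\cdot\bell|=\frac35$. Hence at least $11$ of the $15$ nontrivial classes contribute to these two frequencies.
\item One odd angle would kill $8$ of the $16$ coefficients, leaving at most $7$ nonzero nontrivial ones. So there are no odd angles.
\item At most $4$ classes can have $|\bxi\cdot\bell|\notin\{1,\frac35,\frac15\}$, frequency $0$ included.
\item Let $A$ be the set of edges whose length is not a multiple of $\frac15$, with $a=|A|\ge 1$, and let $B$ be its complement. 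Consider the classes flipping one edge $j\in A$, or a pair with $j\in A$ and $k\in B$. Their frequencies are $|1-2\ell_j|$ and $|1-2(\ell_j+\ell_k)|$, which lie outside that set.
\item These are $a(6-a)\ge 5>4$ distinct classes, a contradiction.
\end{enumerate}
So all edges are positive multiples of $\frac15$ summing to $1$, i.e.\ all equal $\frac15$. This counting argument treats every sign configuration uniformly, including angles in $\A^+$ and odd angles, with no cancellation analysis at all.
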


\begin{proof} For $n=3$, this is Theorem~\ref{th:triangles123}(e) and for $n=4$, it is a special case of Theorem~\ref{thm:rectangle}.  So, let $R$ be the regular pentagon, and let $Q$ be another convex $5$-gon. The characteristic polynomial for a regular pentagon of perimeter equal to one is 
\begin{align*}
    P_R(t)&:=\cos(t)+5\cos^2\left(\frac{5\pi}{6}\right)\cos\left(\frac{3t}{5}\right)+\left[5\cos^2\left(\frac{5\pi}{6}\right)+5\cos^4\left(\frac{5\pi}{6}\right)\right]\cos\left(\frac{t}{5}\right)\\
    &-\sin^5\left(\frac{5\pi}{6}\right)\\
    &=\cos(t)+\frac{15}{4}\cos\left(\frac{3t}{5}\right)+\frac{105}{16}\cos\left(\frac{t}{5}\right)-\frac{1}{32}.
\end{align*}

Assume that $Q$, a pentagon with angles $\balpha= (\alpha_1,\alpha_2,\ldots, \alpha_5)$ and lengths $\bell=(\ell_1,\ell_2,\ldots,\ell_5)$, has the same characteristic polynomial. Since $\frac{105}{16}>6$, at least 7 choices of $[\bxi]$ must contribute non-trivially to the $\cos\left(\frac{t}{5}\right)$ term.  Similarly, at least $4$ choices of $\bxi$ must contribute non-trivially to the $\cos\left(\frac{3t}{5}\right)$ term. 
Thus at least eleven choices of $\bxi$ must contribute non-trivially to the cosine terms. In particular, since the presence of a single odd angle would result in the vanishing of $a_{[\bxi]}$ for eight distinct $[\bxi]$, \begin{align}
\label{s:only4terms}
 Q \mbox{ cannot have any odd angles, and all except at most four }[\bxi]\mbox{ must satisfy } |\bxi\cdot\bell|\in \left\{1,\frac{3}{5}, \frac{1}{5}\right\}.  
\end{align}

Let $[\bxi(i)]$ be the equivalence class that distinguishes edge $\ell_i$, so for example 
\[ [\bxi(2)]=\pm (1,-1,1,1,1), \]
i.e., $\bxi(i)$ has the $i^{th}$ component with a sign different from all other components.  Similarly, let $\bxi(i,j)$ denote the element that  distinguishes $\ell_i+\ell_j$. For example, 
\[ \bxi(2,4)= (1,-1,1,-1,1). \] 
Suppose  that $Q$ has at least one edge length that is not a multiple of $\frac{1}{5}$. Let \begin{align*}
    A&:= \left\{j: \ell_j \mbox{ is not a multiple of } \frac{1}{5}\right\},\\
    B&:= \{1,2,3,4,5\}\setminus A= \left\{k: \ell_k \mbox{ is  a multiple of } \frac{1}{5}\right\}.
\end{align*}
Let $a:= \# A >0$ and $b:= \# B= 5-a$. For $j\in A$ and $k\in B$, neither $|\bxi(j)\cdot\bell|$ nor $|\bxi (j,k)\cdot\bell|$ lies in $\left\{1,\frac{3}{5}, \frac{1}{5}\right\}$. Since $a+ab =a(6-a)>4$, this contradicts \eqref{s:only4terms}.  Therefore, all the edge lengths of $Q$ are multiples of $\frac{1}{5}$. Since 
\[ \sum_{j=1}^5 \ell_j =1, \] 
we have $\ell_j=\frac{1}{5} $ for  $j=1,2,3,4,5$.
\end{proof}

Next we investigate convex quadrilaterals that have the same characteristic polynomial as an equilateral triangle.

\begin{thm} \label{th:equilateral}  
Assume that a convex quadrilateral has the same characteristic polynomial as an equilateral triangle of perimeter one.  Then it must satisfy the following conditions:
\begin{enumerate} 
\item \label{th:equilaterali} the quadrilateral's side lengths satisfy:  $\ell_1+\ell_2=\ell_3+\ell_4=\frac 1 2$; 
\item \label{th:equilateralii} the quadrilateral has precisely two non-adjacent odd angles; and
\item \label{th:equilateraliii} the odd angles are of the form 
\[ \frac{\pi}{2k+1}, \quad \frac{\pi}{2j+1},\]
for positive integers $j$ and $k$ of different parities, and the other two angles are equal to each other.  Hence all four angles are rational. 
\end{enumerate}

If, moreover, a convex quadrilateral is Steklov isospectral to an equilateral triangle, then there are only finitely many values the angles may assume.  For each such value of $\balpha$, there is a continuous family of quadrilaterals with this data that satisfy (1) and have the same characteristic polynomial as the equilateral triangle.
\end{thm}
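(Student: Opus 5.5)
The plan is to compare coefficients of $P_Q$ for a convex quadrilateral $Q$ with those of the equilateral triangle. For the equilateral triangle every angle is $\frac{\pi}{3}$, so $c(\frac{\pi}{3})=0$ and $s(\frac{\pi}{3})=\sin(\frac{3\pi}{2})=-1$. Hence $P_T(t)=\cos(t)+1$.

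\textbf{Writing out $P_Q$.} With the cyclic conventions, the seven classes $[\bxi]\neq[(1,1,1,1)]$ contribute as follows.
\begin{itemize}
\item The four ``single flips'' give $c(\al_{j-1})c(\al_j)\cos((1-2\ell_j)t)$. These are adjacent-angle products, and their frequencies are nonzero because $\ell_j<\frac12$.
\item The two ``opposite'' classes give $c(\al_2)c(\al_4)\cos(|\ell_1+\ell_2-\ell_3-\ell_4|t)$ and $c(\al_1)c(\al_3)\cos(|\ell_2+\ell_3-\ell_1-\ell_4|t)$.
\item The alternating class gives $\prod_j c(\al_j)\cos(|\ell_1-\ell_2+\ell_3-\ell_4|t)$.
\end{itemize}
The equality $P_Q=P_T$ says two things. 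First, every nonzero frequency must cancel. Second, the zero-frequency coefficients minus $\prod s(\al_j)$ must sum to $1$.

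\textbf{Forcing exactly two non-adjacent odd angles (hardest step).} Since the angle sum is $2\pi$, at most two angles are odd. Hence $|\prod s(\al_j)|<1$, and some class with $\bxi\cdot\bell=0$ must carry a nonzero coefficient.

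I must rule out the configurations with zero odd angles, one odd angle, or two adjacent odd angles. In each of these, at least one adjacent product $c(\al_{j-1})c(\al_j)$ survives, and its frequency $1-2\ell_j$ is nonzero. It must therefore be cancelled by other terms of the same frequency and opposite sign.

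To exclude this I would use the sign structure of Notation~\ref{nota:A}. At most two angles lie outside $\A^-$, so most of the products are positive. I would then do a case check of which frequency coincidences ($1-2\ell_j=1-2\ell_k$, or $1-2\ell_j$ equal to one of the opposite or alternating frequencies) are compatible with the required cancellations, combined with the constraint that the constant term equals $1$. I expect this bookkeeping to be the main obstacle. My first attempt would be to show that any surviving adjacent term forces an equality $\ell_j=\ell_k$ whose opposite-sign coefficient requires too many angles outside $\A^-$.

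\textbf{The case of two non-adjacent odd angles.} Say $\al_1,\al_3$ are odd. Then only $c(\al_2)c(\al_4)\cos(|\ell_1+\ell_2-\ell_3-\ell_4|t)$ survives among the cosine terms. Its coefficient cannot vanish, since that would make a third angle odd. So the frequency must be zero, giving $\ell_1+\ell_2=\ell_3+\ell_4=\frac12$, which is condition (1).

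Write $\al_1=\frac{\pi}{2k+1}$, $\al_3=\frac{\pi}{2j+1}$ and $\theta_i=\frac{\pi^2}{2\al_i}$. Then $s(\al_1)s(\al_3)=(-1)^{k+j}$, and the constant-term condition reads
\[
c(\al_2)c(\al_4)-(-1)^{k+j}s(\al_2)s(\al_4)=1,\quad\text{i.e.}\quad \cos\bigl(\theta_2+(-1)^{k+j}\theta_4\bigr)=1 .
\]
Since $\al_2+\al_4\geq 2\pi-\frac{2\pi}{3}$, both $\al_2,\al_4$ exceed $\frac\pi3$, so $\theta_2,\theta_4\in(\frac{\pi}{2},\frac{3\pi}{2})$.
\begin{itemize}
\item If $k,j$ have the same parity, we need $\theta_2+\theta_4\in 2\pi\Z$. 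That forces $\theta_2+\theta_4=2\pi$, and I would check that this contradicts the angle sum.
\item If the parities differ, we need $\theta_2-\theta_4\in2\pi\Z$, which forces $\theta_2=\theta_4$ and hence $\al_2=\al_4$.
\end{itemize}
This gives condition (3), and rationality follows from the angle sum.

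\textbf{Finiteness and the continuous family.} For the final claims, Steklov isospectrality supplies the uniform lower bound on angles from \cite{steklov1}. This bounds $k$ and $j$, so only finitely many $\balpha$ occur.

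For fixed such $\balpha$, I would exhibit the family directly. Place the diagonal joining the vertices $\al_2,\al_4$. The conditions $\ell_1+\ell_2=\ell_3+\ell_4=\frac12$ with prescribed angles $\al_1,\al_3$ at the other two vertices then leave a one-parameter freedom, analogous to the ellipse picture in Theorem~\ref{th:triangles123}(c). On an open interval of this parameter the angles at $\al_2,\al_4$ stay equal and convexity is preserved. Every member of the family has the same $P_Q=\cos(t)+1$ by the computation above.
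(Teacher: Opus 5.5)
First, the step you call hardest is the one that produces (1) and (2), and it is only announced, not carried out. The sign heuristic you propose cannot carry it. With no odd angles, suppose exactly one of $\alpha_2,\alpha_4$ lies in $\A^+$. Then two shortest edges $\ell_1,\ell_2$ meeting at $\alpha_1$ carry coefficients $c(\alpha_4)c(\alpha_1)$ and $c(\alpha_1)c(\alpha_2)$ of opposite sign, so signs alone exclude nothing. The missing lever is a sharpening of your own observation. Since the constant term is $+1$ and $|\prod_j s(\alpha_j)|<1$, some class with $\bxi\cdot\bell=0$ (necessarily two plus and two minus signs) has a \emph{positive} coefficient. The three cases then go as follows.
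If $\alpha_1$ is the only odd angle, the surviving classes have $\xi_1=\xi_2$, so this class is $[(1,1,-1,-1)]$. This gives $\ell_1+\ell_2=\ell_3+\ell_4$ and $c(\alpha_2)c(\alpha_4)>0$. The single flips of $\ell_3,\ell_4$ have nonzero frequencies and must cancel, which forces $\ell_3=\ell_4$ and $c(\alpha_3)(c(\alpha_2)+c(\alpha_4))=0$, a contradiction.
If $\alpha_1,\alpha_2$ are adjacent odd angles, the surviving classes have $\xi_1=\xi_2=\xi_3$, so no zero-frequency class survives at all.
With no odd angles, the paper argues through lengths rather than signs. The frequency $1-2\ell_{\min}$ arises only from single flips of shortest edges, so the shortest length is repeated. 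Combined with the zero-frequency class, this gives two pairs of equal sides, so $Q$ is a parallelogram or a kite. Then \eqref{eq:char parallelogram} and \eqref{eq:char_kite} show a surviving cosine term (Remark~\ref{rem:quad no odd}).

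Second, your construction of the continuous family is false. Let $d$ be the diagonal joining the vertices with angles $\alpha_2,\alpha_4$. Let $\beta_2,\beta_4$ and $\gamma_2,\gamma_4$ be the angles at its endpoints in the two triangles with apex angles $\alpha_1$ and $\alpha_3$. Each triangle has legs summing to $\frac12$, so Mollweide's formula gives $\cos\frac{\beta_2-\beta_4}{2}=\frac{\sin(\alpha_1/2)}{2d}$ and $\cos\frac{\gamma_2-\gamma_4}{2}=\frac{\sin(\alpha_3/2)}{2d}$. The condition $\alpha_2=\alpha_4$ means $\beta_2-\beta_4=\gamma_4-\gamma_2$. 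That forces $\sin(\alpha_1/2)=\sin(\alpha_3/2)$, i.e.\ $\alpha_1=\alpha_3$, which your parity condition forbids. So along your one-parameter family the angles at $\alpha_2,\alpha_4$ are never equal. In fact no convex quadrilateral satisfies (1)--(3) at all; this is Mancino's result in Remark~\ref{rem:impossiblequads}. Hence the theorem's last sentence holds only vacuously. The paper merely cites \cite[Lemma 5.8]{steklov1} there, and you should not try to exhibit a nonempty family. The rest of your argument matches the paper: the computation for two non-adjacent odd angles with its parity split, and finiteness via the uniform angle lower bound. The same-parity check you defer is the bound $\theta_2+\theta_4<2\pi$. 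The paper gets it from the convexity of $\alpha\mapsto\frac{\pi^2}{2\alpha}+\frac{\pi^2}{2(4\pi/3-\alpha)}$ on $[\frac\pi3,\pi]$, whose maximum $2\pi$ is attained only at the endpoints.
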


\begin{proof} 
The characteristic polynomial of an equilateral triangle with perimeter equal to one is 
\[ \cos(t) + 1.\]

Assume that $Q$, a quadrilateral with angles $\balpha = (\alpha_1, \alpha_2, \alpha_3,\alpha_4)$ and side lengths $\bell = (\ell_1, \ell_2, \ell_3,\ell_4),$ has the same characteristic polynomial.  We consider three cases, depending on the number of odd angles of $Q$, which is necessarily 0, 1, or 2 since $Q$ is convex.

In case $Q$ has no odd angles, we postpone the proof to the next section; see Remark~\ref{rem:quad no odd}.

 Now assume that one of the angles of $Q$ is odd; without loss of generality, let this angle be $\alpha_1$.  Then the characteristic polynomial of $Q$ is given by 
\beq P_Q (t) &=& \cos(t) + c(\alpha_3) c(\alpha_4) \cos (|\ell_1 + \ell_2+\ell_3 - \ell_4|t) + c(\alpha_2) c(\alpha_3) \cos (|\ell_1 + \ell_2-\ell_3 + \ell_4|t)\\ \nonumber
&& + \ c(\alpha_2) c(\alpha_4) \cos (|\ell_1 + \ell_2-\ell_3 - \ell_4|t) - s(\alpha_1) s(\alpha_2) s(\alpha_3)s(\alpha_4). \label{eq:charp_one_odd}
\eeq 
Since the frequencies in the second and third cosine terms cannot vanish, in order for the constant term in $P_Q$ to be equal to one, we must have $|\ell_1+\ell_2-\ell_3-\ell_4| = 0$ and $c(\al_2)c(\al_4)>0$.   
Moreover, the frequencies in the second and third cosine terms must be identical, and the coefficients must cause these terms to cancel.  We therefore have 
\[ c(\al_3) c(\al_4) + c(\al_2) c(\al_3) = 0, \]
so that $c(\al_3) = 0$ or $c(\al_4) = - c(\al_2)$. The latter option contradicts $c(\al_2)c(\al_4)>0$, and the former option implies that $\alpha_3$ is also odd.  Thus, if $Q$ has exactly one odd angle, it cannot have the same characteristic polynomial as an equilateral triangle.

Finally, we consider the case when $Q$ has two odd angles.  We saw above that labeling one of the odd angles as $\al_1$ leads to the other odd angle being $\al_3$, so that condition \eqref{th:equilateralii} of the theorem holds.  The characteristic polynomial becomes 
\begin{equation*} 
P_Q (t) = \cos(t) + 
 \ c(\alpha_2) c(\alpha_4) \cos (|\ell_1 + \ell_2-\ell_3 - \ell_4|t) \pm s(\alpha_2)s(\alpha_4). \label{eq:charp_one_odd}
\end{equation*} 
Again, we must have $c(\al_2)c(\al_4)>0$ and $|\ell_1+\ell_2-\ell_3-\ell_4| = 0$, so that condition \eqref{th:equilaterali} of the theorem holds. Setting 
\[ \alpha_1 = \frac \pi {2k+1} \ \ \text{and} \ \  \alpha_3 = \frac{\pi}{2j+1}\]
for some $j$ and $k$, we must have that 
\[ 
1 = c(\al_2) c(\al_4) \pm s(\al_2) s(\al_4) = \cos \left(\frac{\pi^2}{2\alpha_2} \mp \frac{\pi^2}{2 \alpha_4}\right),\]
where the upper sign is used if $\al_1$ and $\al_3$ have opposite parity, and the lower sign is used otherwise.  \pink

If $\al_1$ and $\al_3$ have the same parity, then $\cos \left(\frac{\pi^2}{2\alpha_2} + \frac{\pi^2}{2 \alpha_4}\right)=1$ and thus
\begin{equation}\label{eqn:al2al4}
\frac{\pi^2}{2\alpha_2} + \frac{\pi^2}{2 \alpha_4} = 2n\pi
\end{equation}
for some $n \in \Z^+$. Since $\al_1 + \al_3 \leq \frac{2\pi}{3}$, we have $\al_2, \al_4 \in (\frac{\pi}{3},\pi)$ and $\al_4 \geq \frac{4\pi}{3} - \al_2$.  Consider the function
\[
f(\al) = \frac{\pi^2}{2 \al} + \frac{\pi^2}{2(\frac{4\pi}{3} - \al)}.
\]
It is straightforward to check that the maximum of $f$ on the closed interval $[\frac{\pi}{3}, \pi]$ is $2\pi$ and is attained only at the endpoints.  Thus $2 \pi>f(\al_2) \geq \frac{\pi^2}{2\alpha_2} + \frac{\pi^2}{2 \alpha_4}$, contradicting \eqref{eqn:al2al4}.

Thus $\al_1$ and $\al_3$ must have opposite parity, we have $\cos \left(\frac{\pi^2}{2\alpha_2} - \frac{\pi^2}{2 \alpha_4}\right)= 1$, and
\begin{equation}
\frac{\pi^2}{2 \alpha_2} - \frac{\pi^2}{2 \alpha_4} = 2m \pi
\end{equation}
for some $m \in \Z$. We again have $\al_2, \al_4 \in (\frac{\pi}{3},\pi)$, so that $\frac{\pi^2}{2\al_2}, \frac{\pi^2}{2\al_4} \in (\frac{\pi}{2}, \frac{3\pi}{2}).$  Thus $m=0$, $\al_2 = \al_4$, and condition (3) of the theorem holds.

By \cite[Corollary 3.12]{steklov1},   convex quadrilaterals that are Steklov isospectral to a given equilateral triangle have interior angles that are uniformly bounded from below by a positive constant.  Consequently, there are at most finitely many possible values of $j$ and $k$ and therewith at most finitely many values for the angles.  
 By \cite[Lemma 5.8]{steklov1}, for each choice of angle values, there is a continuous family of quadrilaterals that have this same angle and length data, and thus necessarily the same characteristic polynomial as the equilateral triangle.
\end{proof} 

\begin{remark} \label{rem:impossiblequads}
While we were in the process of preparing these results for publication, the first named author supervised an undergraduate research project by Dante Mancino at Bucknell University.  Mancino proved that no convex quadrilateral can simultaneously satisfy all three conditions in Theorem \ref{th:equilateral} \cite{ManPrivateComm}. Thus no convex quadrilateral can have the same characteristic polynomial as an equilateral triangle.  
\end{remark}

\section{Smoothly bounded domains}\label{s:smooth} 

We now turn to the question of whether a convex polygonal domain can have the same characteristic polynomial as a simply-connected smoothly bounded domain.    By Proposition~\ref{prop:vertices}, admissible curvilinear $n$-gons with $n\geq 1$ are always distinguishable from simply-connected smoothly bounded domains by their characteristic polynomials. 
For convex $n$-gons that are not necessarily admissible, we will begin by showing that triangles and quadrilaterals are distinguishable from simply-connected smoothly bounded domains.  We will then address the case of $n$-gons with $n\geq 5$.  

\subsection{Triangles and quadrilaterals}\label{subsec:triquad} We showed in Theorem~\ref{th:triangles123} that the equilateral triangle is uniquely determined among all triangles by its characteristic polynomial, and showed in Theorem~\ref{th:equilateral} that at most finitely many convex quadrilaterals have the same characteristic polynomial as the equilateral triangle. The quadrilaterals necessarily satisfied certain conditions, but by Remark \ref{rem:impossiblequads}  no quadrilateral can simultaneously satisfy these conditions, so the equilateral triangle is uniquely determined among all triangles and quadrilaterals by its characteristic polynomial. We now show that the characteristic polynomial of a triangle or a quadrilateral is never equal to that of a smoothly bounded domain. 

\begin{thm}\label{res:quadrilaterals} 
No triangle or quadrilateral has the same characteristic polynomial as a smoothly bounded domain. 
\end{thm}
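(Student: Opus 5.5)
The plan is to compare an arbitrary convex triangle or quadrilateral $\Om$ of perimeter one with the characteristic polynomial \eqref{eq:cp_smooth_domain} of a smoothly bounded domain of perimeter one, namely $\cos(t)-1$. Two features of this target do the work: $P_\Om(t)-\cos(t)$ must have no nonzero-frequency cosine terms, and the constant term must equal $-1$. For triangles this is quick. By the proof of Theorem~\ref{th:triangles123}(a), the frequencies $|1-2\ell_j|$ never vanish, and $P_T-\cos(t)$ has no cosine terms only when $T$ has at least two odd angles. The constant term is then $-s(\al_1)s(\al_2)s(\al_3)$, whose modulus equals $|s(\al_3)|$ for the third angle $\al_3$. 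This must be $1$, so $\al_3$ is odd as well. Then $T$ is equilateral with $P_T(t)=\cos(t)+1\neq \cos(t)-1$, a contradiction.

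For quadrilaterals I would split by the number of odd angles, which is $0$, $1$ or $2$. In every case the constant term equals
\[
\sum_{[\bxi]\in A} a_{[\bxi]} - \prod_{j=1}^4 s(\al_j),
\]
where $A$ is the set of classes with $\bxi\cdot\bell=0$. Using convexity, at most one such relation (up to sign) can hold with a nonzero coefficient in the relevant cases: it has the form $\ell_i+\ell_{i+1}=\ell_{i+2}+\ell_{i+3}$ or $\ell_1+\ell_3=\ell_2+\ell_4$, and two such relations together would force a degenerate edge.

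\textbf{Two odd angles.} If they are adjacent, say $\al_1,\al_2$, every $a_{[\bxi]}$ with a sign change at $\al_1$ or $\al_2$ vanishes. The surviving terms come from $\bxi$ with $\xi_1=\xi_2=\xi_3$, giving the frequency $|\ell_1+\ell_2+\ell_3-\ell_4|\neq 0$ with coefficient $c(\al_3)c(\al_4)$. For that term to be absent we need one more odd angle, which is impossible unless the angles sum to less than $2\pi$; so this case is excluded. If they are opposite, say $\al_1,\al_3$, the polynomial is exactly the one displayed in the proof of Theorem~\ref{th:equilateral}:
\[
P_Q(t)=\cos(t)+c(\al_2)c(\al_4)\cos(|\ell_1+\ell_2-\ell_3-\ell_4|t)\pm s(\al_2)s(\al_4).
\]
The constant must be $-1$. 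Either the frequency is nonzero and $c(\al_2)c(\al_4)=0$, which is impossible since then three angles would be odd; or the frequency is zero and $\cos\bigl(\tfrac{\pi^2}{2\al_2}\mp\tfrac{\pi^2}{2\al_4}\bigr)=-1$. Since $\al_2,\al_4>\pi/3$, both $\tfrac{\pi^2}{2\al_2}$ and $\tfrac{\pi^2}{2\al_4}$ lie in $(\pi/2,3\pi/2)$. The difference then lies in $(-\pi,\pi)$ and cannot be an odd multiple of $\pi$. For the sum, I need to rule out the value $2\pi$ (the only odd multiple of $\pi$ in $(\pi,3\pi)$ is $3\pi$, so I must rule out $3\pi$). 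I would use $\al_2+\al_4=\pi+\text{something}\ge 4\pi/3$ and a convexity estimate like the function $f$ in the proof of Theorem~\ref{th:equilateral}, which gives a sum $<2\pi<3\pi$. This closes the case.

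\textbf{One odd angle.} Here the polynomial is as in \eqref{eq:charp_one_odd}. The frequencies $|\ell_1+\ell_2+\ell_3-\ell_4|$ and $|\ell_1+\ell_2-\ell_3+\ell_4|$ are nonzero, so those terms must cancel, which forces $c(\al_3)c(\al_4)+c(\al_2)c(\al_3)=0$ as in Theorem~\ref{th:equilateral}. A second odd angle is excluded by assumption, so $c(\al_4)=-c(\al_2)$. The third term then needs $\ell_1+\ell_2=\ell_3+\ell_4$. The constant becomes $c(\al_2)c(\al_4)-s(\al_1)s(\al_2)s(\al_3)s(\al_4)$, which must be $-1$. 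Since $c(\al_2)c(\al_4)=-c(\al_2)^2$ and $|s|\le1$, this forces $|s(\al_j)|=1$ for $j=2,3,4$, so $\al_3$ is odd, a contradiction.

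\textbf{No odd angles.} This is the case I expect to be the main obstacle, since many cancellations are conceivable. I would first argue that the constant term $-1$ together with the vanishing of all seven nonconstant cosine terms is incompatible with the structure. There are only a few frequency values, $|\bxi\cdot\bell|$ for $\bxi\neq\pm(1,1,1,1)$, namely $|1-2\ell_i|$ and $|\ell_i+\ell_{i+1}-\ell_{i+2}-\ell_{i+3}|$, $|\ell_1-\ell_2+\ell_3-\ell_4|$. The four frequencies $|1-2\ell_i|$ are nonzero, and since all $c(\al_j)\neq0$, their coefficients $c(\al_{i-1})c(\al_i)$ are nonzero. They can cancel only in coinciding pairs, and $1-2\ell_i=1-2\ell_k$ requires $\ell_i=\ell_k$. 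I would enumerate the pairings: cancellation between adjacent products such as $c(\al_4)c(\al_1)+c(\al_1)c(\al_2)=0$ gives $c(\al_4)=-c(\al_2)$, and so on. Note also that at most two angles lie outside $\A^-$, and any two angles in $\A^-$ give a positive product, which already forbids most cancellations. I expect this sign bookkeeping to leave only degenerate configurations (for instance a rhombus-like one, or one needing two angles in $\A^+$ at the positions forced by the cancellation), and I would then check the constant term directly to be $\ne -1$ in those. I would place this argument so that it also supplies the deferred case in Remark~\ref{rem:quad no odd}.
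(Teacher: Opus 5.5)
Your triangle case and both two-odd-angle cases are sound. For opposite odd angles you do not even need the function $f$: $\frac{\pi^2}{2\alpha_2}+\frac{\pi^2}{2\alpha_4}$ lies in the open interval $(\pi,3\pi)$, so it is never an odd multiple of $\pi$.

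\textbf{One odd angle: a missed case.} The genuine gap is at the end of this case. Once $c(\alpha_4)=-c(\alpha_2)$, you have $s(\alpha_4)=\pm s(\alpha_2)$ and $s(\alpha_1)=\pm 1$. So the constant term is $-c(\alpha_2)^2\mp s(\alpha_2)^2 s(\alpha_3)=-1+s(\alpha_2)^2\bigl(1\mp s(\alpha_3)\bigr)$. Requiring this to equal $-1$ does \emph{not} force $|s(\alpha_3)|=1$. It also holds when $s(\alpha_2)=0$, i.e.\ when $\alpha_2$ and $\alpha_4$ are even angles of opposite parity, so that $c(\alpha_2)c(\alpha_4)=-1$ and the sine product vanishes.

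This case cannot be excluded by coefficient bookkeeping. The angle sum forces $\alpha_1=\frac{\pi}{3}$, $\{\alpha_2,\alpha_4\}=\{\frac{\pi}{2},\frac{\pi}{4}\}$ and $\alpha_3=\frac{11\pi}{12}$. These angles together with $\ell_3=\ell_4=\frac14$ and $\ell_1+\ell_2=\frac12$ give a characteristic polynomial exactly equal to $\cos(t)-1$. What you need is a Euclidean argument that no convex quadrilateral realizes this data. The paper cuts along the diagonal $v_2v_4$. This diagonal has length $\frac12\cos(\pi/24)$, being the base of an isosceles triangle with legs $\frac14$ and apex angle $\frac{11\pi}{12}$. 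The Law of Sines in the remaining triangle (angles $\frac{\pi}{3},\frac{5\pi}{24},\frac{11\pi}{24}$) then gives a side longer than $\frac12$, contradicting $\ell_1+\ell_2=\frac12$.

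A smaller point: you should first show that the frequency $|\ell_1+\ell_2-\ell_3-\ell_4|$ vanishes, since otherwise the constant is $-\prod_j s(\alpha_j)$, of modulus less than $1$. Only then may you assert that the other two terms cancel each other, because a priori $|1-2\ell_4|$ can equal $|1-2(\ell_3+\ell_4)|$.

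\textbf{No odd angles: only a plan.} This case is not proved, and the plan rests on claims that fail. The terms of frequency $|1-2\ell_i|$ need not cancel only among themselves; for instance $|1-2\ell_4|=|1-2(\ell_3+\ell_4)|$ when $\ell_4=\ell_1+\ell_2$. Your general remark that two relations $\bxi\cdot\bell=0$ force a degenerate edge is also false: parallelograms satisfy two such relations and rhombi all three.

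The missing idea is to use the constant term first. With no odd angles, $|\prod_j s(\alpha_j)|<1$, so a constant term of $-1$ forces some $\bxi$ with $\bxi\cdot\bell=0$ and $a_{[\bxi]}<0$. For sorted lengths $\ell\le\ell'\le\ell''\le\ell'''$ this reads $\ell+\ell'''=\ell'+\ell''$. The largest frequency $1-2\ell$ in $P_Q(t)-\cos(t)$ arises only from edges of length $\ell$, and a single such edge contributes a nonzero coefficient. So its cancellation forces $\ell=\ell'$, and hence $\ell''=\ell'''$. Thus $Q$ is a parallelogram or a kite. The explicit formulas of Section~\ref{s:quad} then show a surviving nonzero cosine term at frequency twice the longer edge length.
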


 \begin{proof}

 By \eqref{eq:triangle_charpoly}, only triangles with two or three odd angles have a single cosine term $\cos(t)$ in their characteristic polynomial. The constant term $s(\alpha_1)s(\alpha_2)s(\alpha_3)$ has magnitude one if and only if $T$ has three odd angles and is necessarily equilateral.  In that case, the sign of the constant term distinguishes the characteristic polynomial of the equilateral triangle from that of the smoothly bounded domains.  Thus, no triangle has the same characteristic polynomial as a smoothly bounded domain.
 
Let $Q$ be a quadrilateral of perimeter one.  We need to show that $P_Q(t) \not\equiv \cos(t)-1$.  We suppose the contrary.  Denote the angles of $Q$ by $\alpha_1,\dots, \al_4$.   We will consider separately the three cases that the number of odd angles is zero, one or two.    We first observe that in all cases, $|s(\al_1)s(\al_2)s(\al_3)s(\al_4)| <1$; consequently in the notation of Definition~\ref{def: char poly}, 
\beq\label{eq:exist} \exists\, \bxi\in\{\pm 1\}^4 \mbox{\,\,such that \,} \bxi\cdot\bell=0 \mbox{\,\,and\,\,} a_{\bxi} <0. \eeq
  Necessarily, $\bxi$ will have two positive entries and two negative ones.

 We first consider the case that $Q$ has no odd angles, so $a_{\bxi}\neq 0$ for every $\bxi$.  Denote the edge lengths in increasing (not necessarily cyclic) order as 
\[\ell\leq \ell'\leq\ell''\leq \ell'''.\]
By the observation in the previous paragraph, we must have 
\beq\label{eq: sum cond}\ell+\ell''' =\ell'+\ell''.\eeq
Moreover, our assumption that $P_Q(t)=\cos(t)-1$, which does not include a cosine term of frequency $-\ell +\ell'+\ell''+\ell'''$, implies that at least two edges must realize the minimal length $\ell$, i.e., $\ell=\ell'$.  Equation~\eqref{eq: sum cond} then yields $\ell''=\ell'''$, so $Q$ has two pairs of equal edge lengths.  Thus $Q$ is either a parallelogram or a kite with no odd angles.   A glance at Equations~\eqref{eq:char parallelogram} and \eqref{eq:char_kite} then shows that the characteristic polynomial cannot equal $\cos(t)-1$, completing this case.

 Next suppose that $Q$ has exactly one odd angle, say $\alpha_1$.  Using our usual cyclic notation for edge lengths, we have 
 \beq P_Q(t)= \cos(t)&+&c(\al_2)c(\al_3)\cos(|\ell_1+\ell_2-\ell_3+\ell_4|t) + c(\al_3)c(\al_4)\cos(|\ell_1+\ell_2+\ell_3-\ell_4|t) \nn \\&+&c(\al_2)c(\al_4)\cos(|\ell_1+\ell_2-\ell_3-\ell_4|t) \,\pm s(\al_2)s(\al_3)s(\al_4),\label{eq: Q 1 odd}\eeq
 where the sign in the last term depends on the odd angle $\alpha_1$.   Comparing this with Equation~\eqref{eq:exist}, we must have 
 \beq\label{eq:12=34} \ell_1+\ell_2=\ell_3+\ell_4\, \left(=\frac{1}{2}\right)\eeq 
 and
  $c(\al_2)c(\al_4)<0.$   The other two cosine frequencies must cancel, so 
 \beq\label{eq:ellell4} \ell_3=\ell_4=\frac{1}{4}\eeq and $c(\al_3)[c(\al_2)+c(\al_4)]=0$.  Note that $c(\al_3)\neq 0$ since we assumed that $\al_1$ is the only odd angle.  Thus $c(\al_4)=-c(\al_2)$.  This implies that $s(\al_4)=\pm s(\al_2)$.   Equation~\eqref{eq: Q 1 odd} thus simplifies to 
 \beq P_Q(t)&=&\cos(t) -c(\al_2)^2 \pm s(\al_2)^2s(\al_3) \nn \\&=& \cos(t) -1+ s(\al_2)^2[1\pm s(\al_3)].\eeq
 Since $\al_3$ is not odd, $1\pm s(\al_3)\neq 0$.  Thus our assumption that $P_Q(t)=\cos(t)-1$ implies that $s(\al_2)=0$, i.e., $\al_2$ is even.   $\al_4$ is even as well but distinct from $\al_2$ since $c(\al_4)=-c(\al_2)$.   Given that $\al_1$ is odd and that necessarily $\al_1+\al_2+\al_4>\pi$, the only possibility is that $\al_1=\frac{\pi}{3}$ and $\{\al_2,\al_4\}=\{\frac{\pi}{2}, \frac{\pi}{4}\}$ and then $\al_3=\frac{11\pi}{12}$.   This together with Equation~\eqref{eq:ellell4} shows that the line segment $s$ joining vertices $\al_2$ and $\al_4$ divides $Q$ into two triangles:  (i) an isosceles triangle with angles $\frac{\pi}{24}, \frac{11\pi}{12}, \frac{\pi}{24}$, two edges of length $\frac{1}{4}$, and edge $s$ of length $\frac{1}{2}\cos(\pi/24)$;  (ii) a triangle $T$ with angles $\frac \pi 3$, $\frac{5\pi}{24}$ and $\frac{11\pi}{24}$ and sides of length $\ell_1$, $\ell_2$ and $s=\frac 1 2 \cos(\pi/24)$.  
The Law of Sines gives that either $\ell_1$ or $\ell_2$, whichever is opposite the angle measuring $\frac{11\pi}{24}$, measures $\frac 1 2 \frac{\cos(\pi/24) \sin(11\pi/24)}{\sin(\pi/3)} > \frac 1 2$, contradicting Equation \eqref{eq:12=34}. This completes the proof in the case that $Q$ has exactly one odd angle.

Finally suppose that $Q$ has two odd angles.   If the odd angles are adjacent, say $\al_1$ and $\al_2$, then we have
$P_Q(t)=\cos(t) +c(\al_3)c(\al_4)\cos(|\ell_1+\ell_2+\ell_3-\ell_4|t) \pm s(\al_3)s(\al_4)$, which does not coincide with $ \cos(t)-1.$  Thus we may assume that the odd angles are non-adjacent, say $\alpha_1$ and $\alpha_3$.  We then have
$P_Q(t)=\cos(t) +c(\alpha_2)c(\al_4)\cos(|\ell_1+\ell_2-\ell_3-\ell_4|t) \pm s(\al_2)s(\al_4).$ Since $\al_1+\al_3\leq \frac{2\pi}{3}$, both $\al_2$ and $\al_4$ must lie in $\left(\frac{\pi}{3},\pi\right)$ and thus $c(\al_2)c(\al_4)>0$.  Consequently, Condition~\eqref{eq:exist} fails to hold.  This contradiction completes the proof.
 \end{proof}

 \begin{remark}\label{rem:quad no odd}
The proof that a quadrilateral $Q$ with no odd angles cannot have the same characteristic polynomial as a smooth domain is easily modified to show that $Q$ cannot have the same characteristic polynomial as an equilateral triangle. The characteristic polynomial of the equilateral triangle agrees with that of a smooth domain except for the sign of the constant term.  Thus in Equation~\eqref{eq:exist}, we need to replace $a_{\bxi} <0$ by $a_{\bxi} >0$.  The rest of the argument in the case that $Q$ has no odd angles does not use the constant term and thus goes through without change.
     \end{remark}

\subsection{Higher $n$-gons}\label{subsec:ngons}

In this subsection, we address the question of whether an $n$-gon $\Om$ with $n\geq 5$ can have the same characteristic polynomial as a simply-connected smoothly bounded domain.    We will not resolve the question but will obtain strong necessary conditions.  When a polygon $\Omega$ has odd interior angles, we introduce a curvilinear polygon, $\omred$ ($\Omega$ reduced), obtained by removing the odd vertices of $\Omega$.

\begin{defn}\cite[Defn. 5.1; Notation \& Remarks 2.12]{steklov1} \label{def: omega reduced} 
Let $\Om$ be a convex $n$-gon.
 \begin{enumerate} 
  \item[(a)]
 Let $k$ be the number of odd interior angles in $\Om$.  If $k=0$, set $\omred:=\Om$.  If $k=1$ or 2, let $\omred$ be a curvilinear $(n-k)$-gon obtained by ``removing'' the vertices where the odd angles occur.   More precisely, if $\alpha_j$ is an odd angle and $\ell_j$ and $\ell_{j+1}$ are the lengths of the two edges that meet at the vertex with angle $\alpha_j$, then replace the two edges by a single smooth curve of length $\ell_j+\ell_{j+1}$, being careful not to affect the adjacent vertex angles $\alpha_{j-1}$ and $\alpha_{j+1}$.   If there are two odd angles, repeat the process.   In particular, if odd angles $\alpha_{j-1}$ and $\alpha_j$ occur at adjacent vertices of $\Om$, then the three edges incident to these two vertices are replaced by a single smooth curve of length $\ell_{j-1}+\ell_j+\ell_{j+1}$.  The only convex polygons with more than two odd angles are equilateral triangles.   In this case, $\omred$ is a smooth simply-connected domain, and the characteristic polynomial of $\omred$ is defined as in Equation \eqref{eq:cp_smooth_domain}. We refer to $\omred$ as the reduced curvilinear $(n-k)$-gon associated with $\Om$.   
   
 \item[(b)] We say that a convex $n$-gon is \emph{weakly edge-admissible} if the edge lengths of $\omred$ are incommensurable over $\{-1,0,1\}$. Observe that incommensurability of the edge lengths of $\Om$ over $\{-1,0,1\}$ implies that $\Om$ is weakly edge-admissible.

\item[(c)]  For a curvilinear $n$-gon $\Om$ with interior angles $\alpha_1,\dots, \alpha_n$, we write $\pmca(\Om)=(|c(\alpha_1)|,\dots, |c(\alpha_n)|)$ with $c(\alpha)$ defined in Equation \eqref{eq:c(alpha)}.
\end{enumerate}
\end{defn}

Since there is a choice of the smooth curves that replace the edges that meet at an odd angle, $\omred$ is not well-defined; this will not matter in our use of $\omred$.  We use only the lengths of the smooth curves and the fact that, by virtue of not being straight line segments, we can distinguish them among the edges of $\omred$.  The utility of $\omred$ lies in the relationship between the characteristic polynomials of $\Om$ and $\omred$.  

\begin{lemma}\cite[Lemma 5.3]{steklov1}\label{om vs omred} We use the notation of Definition~\ref{def: omega reduced}.   Let $\Om$ be a weakly edge-admissible convex $n$-gon.  Let $k$ be the number of odd interior angles in $\Om$.   Then:  
\begin{enumerate}
\item[(a)] $\omred$ is either an admissible curvilinear $(n-k)$-gon or a domain with smooth boundary if $n=k=3$;
\item[(b)] 
The characteristic polynomials of $\Om$ and $\omred$ are identical except possibly for a change in the sign of the constant term.  The sign will depend on the parity of the odd angles in the sense of Definition~\ref{def:exceptional}.
\item[(c)] If $\Om$ is not an equilateral triangle, the characteristic polynomial of $\Om$
determines $\pmca(\omred)$ and $\bell(\omred)$ up to possible permutations of the entries. Moreover, unless $\Om$ has more than one even angle, the characteristic polynomial of $\Om$ determines $\pmca(\omred)$ and $\bell(\omred)$ uniquely (modulo the choice of boundary orientation and cyclic labelling).  

\end{enumerate}
\end{lemma}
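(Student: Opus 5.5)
The proof is a direct computation with the characteristic polynomial, organized around how odd angles and the rule defining $a_{\bxi}$ behave under removing vertices. We do parts (b), (a), (c) in that order.

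\textbf{Part (b).} Let $\alpha_j$ be odd, so $c(\alpha_j)=0$ and $s(\alpha_j)=\pm1$, with sign $(-1)^m$ for $\alpha_j=\frac{\pi}{2m+1}$. In $P_\Om$, every $[\bxi]$ with $\xi_j\neq\xi_{j+1}$ has $a_{[\bxi]}=0$. The surviving classes are exactly those with $\xi_j=\xi_{j+1}$, and these are in bijection with sign vectors on the edges of the curvilinear polygon obtained by merging $\ell_j,\ell_{j+1}$ into one edge of length $\ell_j+\ell_{j+1}$.

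Under this bijection, $\bxi\cdot\bell$ equals the corresponding dot product for the merged polygon. The coefficient $a_{[\bxi]}$ is also unchanged, since the sign changes at the remaining vertices are the same. The product of sines loses the factor $s(\alpha_j)=\pm1$, which only changes the sign of the constant term. Iterating over the $k\le 2$ odd angles gives the polynomial of $\omred$ (for adjacent odd angles, three edges merge). For $n=k=3$, only $\cos(t)$ survives and the constant is $\pm1$; comparing with \eqref{eq:cp_smooth_domain} gives agreement up to sign.

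\textbf{Part (a).} The edges of $\omred$ have lengths that are sums of consecutive edge lengths of $\Om$. Weak edge-admissibility is by definition incommensurability of these lengths over $\{-1,0,1\}$. After removal, no odd angles remain, and the remaining angles lie in $(0,\pi)$. Hence $\omred$ is an admissible curvilinear $(n-k)$-gon, or a smooth domain when $n=k=3$.

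\textbf{Part (c).} We apply Theorem~\ref{th:ss_ngons}, in the stronger form cited as \cite[Theorem 1.17]{klpps21}, to $\omred$. The obstacle is that $P_\Om$ determines $P_{\omred}$ only up to the sign of the constant term, whereas that theorem compares two admissible polygons with literally identical polynomials.

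I would handle this by first noting that the non-constant part of $P_{\omred}$ is known exactly. By Proposition~\ref{prop:vertices}, the number $n-k$ of its vertices can be read off from the count of independent cosine terms. Then I would check that the reconstruction of edge lengths and $|c|$-values in \cite{klpps21} uses only the cosine frequencies and coefficients, not the constant term.
\begin{itemize}
\item Frequencies: the frequencies $|\bxi\cdot\bell|$ with $\bxi$ differing from $(1,\dots,1)$ in one entry give $1-2\ell_i$, hence the edge lengths.
\item Magnitudes: ratios of coefficients give the products of $|c(\alpha_i)|$, hence the individual $|c(\alpha_i)|$.
\end{itemize}
With no even angles the cyclic order is recovered, so $\pmca(\omred)$ and $\bell(\omred)$ are determined modulo orientation and labeling. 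With even angles, part (c) of that theorem allows only the listed permutations.

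The claim of uniqueness when there is at most one even angle needs more care. I expect the finer statement in \cite[Theorem 1.17]{klpps21} to show that the exceptional permutations arise only with two or more even angles. The main step to verify is that these cases can be decided without the constant term. Alternatively, one can compare $\omred$ with a hypothetical $\Om'^{\rm red}$ by adjusting the constant sign, since the constant is then determined as $\pm$ a known product of sines.

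Finally, the curved edges of $\omred$ are distinguishable from the straight ones, which fixes which entries correspond to merged edges.
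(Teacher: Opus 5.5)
Your parts (a) and (b) are fine. They are essentially an unwinding of the definitions, and the bijection between the surviving classes $[\bxi]$ (those with $\xi_j=\xi_{j+1}$ at every odd vertex $\alpha_j$) and the sign classes of $\omred$ is the right mechanism. Weak edge-admissibility is what guarantees that no surviving $\bxi$ has $\bxi\cdot\bell=0$, so the constant term is exactly $\mp\prod s$ and ``sign change'' is literal.

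The paper only quotes this lemma from \cite{steklov1} and gives no proof, so (c) has to stand on its own. There your argument has a genuine gap. Its central step is to check that the reconstruction in \cite{klpps21} uses only the cosine terms. That is false in the low-vertex cases, including the one invoked in Remark~\ref{rem:oneodd_tri}:
\begin{itemize}
\item If $n-k=1$ (a triangle with two odd angles), the non-constant part of $P_{\omred}$ is just $\cos t$ and says nothing about the remaining angle. The value $|c(\alpha_3)|=\sqrt{1-s(\alpha_3)^2}$ is read off from the absolute value of the constant term, as in the proof of Theorem~\ref{th:triangles123}(d).
\item If $n-k=2$ (a triangle with one odd angle, or a quadrilateral with two), the non-constant part is $\cos t+A\cos(|\ell_1'-\ell_2'|t)$ with $A=c(\beta_1)c(\beta_2)$. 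Admissible $2$-gons with equal lengths and equal products but different pairs of $|c|$-values therefore have identical cosine parts. The unordered pair $\{|c(\beta_1)|,|c(\beta_2)|\}$ is recovered only by also using $B=|s(\beta_1)s(\beta_2)|$, via $c(\beta_1)^2+c(\beta_2)^2=1+A^2-B^2$ and $c(\beta_1)^2c(\beta_2)^2=A^2$. This is exactly the argument of Remark~\ref{rem:oneodd_tri}.
\end{itemize}
The cosine count in Proposition~\ref{prop:vertices} also cannot separate $0$-gons from $1$-gons; that too needs $|\text{constant}|$. The other ingredients of (c) are only announced as things you would check or expect: identifying which frequencies are the single flips $|1-2\ell_i|$, reading off adjacency, and the triviality of the permutations when there is at most one even angle.

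The missing idea is that only the \emph{sign} of the constant is lost, and this can be neutralized so that Theorem~\ref{th:ss_ngons} applies as a black box. Your closing ``alternatively'' sentence points this way but does not say how to adjust the sign. Here is one way.
\begin{itemize}
\item Suppose $P_\Om=P_{\Om'}$ and the constant terms of $P_{\omred}$ and $P_{(\Om')^{\mathrm{red}}}$ have opposite signs. Then both are nonzero, so neither reduced polygon has an even angle.
\item Replace one angle $\beta$ of $(\Om')^{\mathrm{red}}$ by $\tilde\beta\in(0,\pi)$ defined by $\frac{\pi^2}{2\tilde\beta}=2\pi m-\frac{\pi^2}{2\beta}$ with $m$ large. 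This preserves $c$, hence all cosine terms, admissibility and $\pmca$, and it flips $s$.
\item The resulting admissible curvilinear polygon has literally the same polynomial as $\omred$. So the full form of \cite[Theorem 1.17]{klpps21} applies, and its conclusions about $\bell$ and $|c|$ transfer back unchanged.
\end{itemize}
The ``moreover'' clause then has to come from the precise statement of \cite[Theorem 1.17]{klpps21} (recalled in \cite[Theorem 2.13]{steklov1}). You should state it rather than expect it.

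Finally, drop your last sentence. $P_\Om$ depends only on lengths and angles, so it cannot tell curved edges from straight ones, and the lemma makes no claim about which edges of $\omred$ are merged.
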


Equilateral triangles are excluded in part (c) only because we have not defined $\pmca(\omred)$ when $\omred$ has smooth boundary.  As we investigate the possibility that a polygonal domain $\Omega$  has the same characteristic polynomial as a smoothly bounded domain, it is convenient to keep track of those side lengths of $\Omega$ that cannot be expressed as the sum of two or more smaller side lengths.  For this we define $\mathcal L(\Omega)$ and $\mathcal L^*(\Omega)$ below.

\begin{nota}\label{nota:edgelengths}
Given a curvilinear $n$-gon $\Om$, let $\Lc(\Om)$ denote the collection of all edge lengths of $\Om$ repeated with multiplicity. We will generally write the entries of $\Lc(\Om)$ in the order of increasing size as opposed to their cyclical order of occurrence in $\Om$.   Let $\Lc^*(\Om)$ denote the set of elements -- without multiplicity -- of $\Lc(\Om)$ that cannot be expressed as the sum of two or more smaller elements of $\Lc(\Om)$. For example, if $\Om$ is a 6-gon with $\Lc(\Om)=\left(\frac{1}{16},\frac{1}{8},\frac{1}{8},\frac{3}{16},\frac{1}{4},\frac{1}{4}\right)$, then $\Lc^*(\Om)=\left\{\frac{1}{16}, \frac{1}{8}\right\}$.
 \end{nota}

To obtain our first collection of necessary conditions that an $n$-gon $\Om$ of perimeter one must satisfy in order that $P_{\Om}(t)=\cos(t)-1$, we use the fact that no cosine terms appear in $P_{\Om}$ other than $\cos(t)$.

\begin{lemma}\label{lem:angle in +} Let $\Om$ be a convex $n$-gon of perimeter one with the same characteristic polynomial as a smoothly bounded simply-connected domain.   Then at least one angle of $\Om$ lies in $\A^+$ (see Notation \ref{nota:A}), and $\Om$ has at most one odd angle.
\end{lemma}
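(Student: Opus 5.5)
The plan is to extract everything from the constant term of $P_\Om$. Since $P_\Om(t)=\cos(t)-1$, no cosine term other than $\cos(t)$ survives. The terms $a_{[\bxi]}\cos(|\bxi\cdot\bell|t)$ with $\bxi\cdot\bell=0$ are absorbed into the constant term, as in the first bullet of Remark~\ref{ss:cpguide}. Hence the constant term satisfies
\[
-1=\sum_{[\bxi]:\,\bxi\cdot\bell=0} a_{[\bxi]} \;-\; \prod_{j=1}^n s(\alpha_j).
\]

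First I dispose of the equilateral triangle. Its characteristic polynomial is $\cos(t)+1$, as recorded in the proof of Theorem~\ref{th:equilateral}, which differs from $\cos(t)-1$. So $\Om$ is not the equilateral triangle. By Remark~\ref{ss:cpguide}, $\Om$ then has at most two odd angles, and therefore at least one angle $\alpha_j$ with $|s(\alpha_j)|<1$. Consequently $\left|\prod_j s(\alpha_j)\right|<1$. The displayed identity then forces $\sum_{\bxi\cdot\bell=0} a_{[\bxi]}<0$. In particular, there exists $\bxi$ with $\bxi\cdot\bell=0$ and $a_{\bxi}<0$, exactly as in \eqref{eq:exist}.

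For the first claim, suppose no angle of $\Om$ lies in $\A^+$. Then every $c(\alpha_j)\le 0$, since $\alpha_j\in\A^-\sqcup\A^0$. As noted in Definition~\ref{def: char poly}, each $a_{\bxi}$ is either $1$ or a product of an even number of the factors $c(\alpha_j)$, so every $a_{\bxi}\ge 0$. This contradicts the existence of an $a_{\bxi}<0$, so some angle lies in $\A^+$. Even angles of positive parity, such as $\frac{\pi}{4}$, belong to $\A^+$, so they need no separate treatment.

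For the second claim, suppose $\Om$ has two odd angles $\alpha_i,\alpha_k$. Since $\Om$ is not equilateral, $\alpha_i+\alpha_k\le \frac{\pi}{3}+\frac{\pi}{5}<\frac{2\pi}{3}$. Every other angle $\alpha_m$ is bounded using the angle sum $(n-2)\pi$ and the fact that the remaining $n-3$ angles are each less than $\pi$:
\[
\alpha_m>(n-2)\pi-\tfrac{2\pi}{3}-(n-3)\pi=\tfrac{\pi}{3}.
\]
Thus every non-odd angle lies in $(\frac{\pi}{3},\pi)\subset\A^-$, and the odd angles lie in $\A^0$. Hence no angle lies in $\A^+$, contradicting the first part.

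The only delicate point is guaranteeing $\left|\prod_j s(\alpha_j)\right|<1$, which is what forces a negative coefficient $a_{\bxi}$. This is exactly why the equilateral triangle must be excluded first, and that case is settled by the sign of its constant term.
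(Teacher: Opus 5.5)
Your proof of the first claim is correct but takes a different route from the paper's. The paper invokes Theorem~\ref{res:quadrilaterals} to get $n\ge 5$, so that some edge has both endpoint angles non-odd. This gives a class $[\bxi]$ with $0<|\bxi\cdot\bell|<1$ and $a_{[\bxi]}\neq 0$. Since $P_\Om$ has no cosine term at that frequency, the coefficients there must cancel, which forces one of them to be negative.

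You instead use the constant term, which is the device behind \eqref{eq:exist} and Proposition~\ref{prop:n_smoothbdd1}: $\bigl|\prod_j s(\alpha_j)\bigr|<1$ forces $\sum_{\bxi\cdot\bell=0}a_{[\bxi]}<0$. Your version is self-contained, needing no appeal to Theorem~\ref{res:quadrilaterals}, and it places the negative coefficient at frequency zero. Once $n\ge 5$ is known, the paper's version uses only the non-constant cosine terms. That is what lets Remark~\ref{rem:equil triangle comparison} transfer the lemma unchanged to polygons sharing the characteristic polynomial $\cos(t)+1$ of the equilateral triangle. Your argument does not transfer there, since the constant term $+1$ only gives $\sum_{\bxi\cdot\bell=0}a_{[\bxi]}=1+\prod_j s(\alpha_j)>0$.

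One step in your second part is false as written. For $n\ge 4$, not being the equilateral triangle does not give $\alpha_i+\alpha_k\le\frac{\pi}{3}+\frac{\pi}{5}$. For example, a parallelogram with angles $\frac{\pi}{3},\frac{2\pi}{3},\frac{\pi}{3},\frac{2\pi}{3}$ has two odd angles both equal to $\frac{\pi}{3}$.

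In general only $\alpha_i+\alpha_k\le\frac{2\pi}{3}$ holds, but that is enough:
\begin{itemize}
\item For $n\ge 4$, the other $n-3\ge 1$ angles sum to strictly less than $(n-3)\pi$, so your displayed bound still gives $\alpha_m>\frac{\pi}{3}$.
\item For $n=3$, your strict inequality is valid, since a non-equilateral triangle cannot have two odd angles both equal to $\frac{\pi}{3}$.
\end{itemize}
Alternatively, argue as the paper does. An angle in $\A^+$ is less than $\frac{\pi}{3}$, so together with two odd angles it would give three angles summing to less than $\pi$, which is impossible in a convex polygon.
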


\begin{proof}
 By Theorem \ref{res:quadrilaterals}, $n \geq 5$, so at most two angles of $\Om$ are odd.   It follows that there exist choices of $[\bxi]$ such that $0<\bxi\cdot\bell <1$ and $a_{[\bxi]}\neq 0$.    The fact that there is no cosine term of frequency $\bxi\cdot\bell$ in $P_{\Om}$ means that cancellation occurs.  Thus there exist $\bxi'$ such that $a_{[\bxi]}$ and $a_{[\bxi']}$ have opposite signs.  Since each of $a_{[\bxi]}$ and $a_{[\bxi']}$ is a product of an even number of factors $c(\al_i)$, the first statement of the lemma follows. Next, since at most two angles of $\Om$ can lie in $(0,\frac{\pi}{3}]$ and at least one is in $\A^+$, there can be at most one odd angle.
\end{proof}

\begin{prop} \label{prop:n_smoothbdd2}
Let $\Om$ be a convex $n$-gon of perimeter one with the same characteristic polynomial as a smoothly bounded simply-connected domain.  First suppose that $\Om$ has no odd angles.   Then:

\begin{enumerate}
\item[(a)] Every $\ell\in \Lc^*(\Om)$ occurs with multiplicity at least two in $\Lc(\Om)$; in particular, $\Om$ cannot have a unique shortest edge.  Moreover, there exist edges $E$ and $E'$ of $\Om$ of length $\ell$ such that the angle at exactly one endpoint of $E$ lies in $\A^+$, whereas either both or neither of the angles at the endpoints of $E'$ lies in $\A^+$.   

\item[(b)] Let $\ell\in\Lc^*(\Om)$.   Then either $\ell$ occurs with multiplicity $m(\ell)=n$ in $\Lc(\Om)$ (so $\Om$ is equilateral), or else there exists $k$ with $2\leq k \leq m(\ell)$ such that $k\ell$ is a sum of \emph{one or more} elements of $\Lc(\Om)$ distinct from $\ell$.   \end{enumerate}

Next suppose that $\Om$ has one odd angle.  Then (a) and (b) hold with $\Lc(\Om)$, $\Lc^*(\Om)$ replaced by $\Lc(\omred)$, $\Lc^*(\omred)$  and $n$ by $n-1$.
\end{prop}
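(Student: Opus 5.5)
The approach is to write $P_\Om=\cos(t)-1$ and to exploit that every cosine frequency in $(0,1)$ must cancel completely. Fix $\ell\in\Lc^*(\Om)$ and let $E_\ell$ be the set of edges of length $\ell$, with $m=m(\ell)=\#E_\ell$. For $[\bxi]$, let $S$ be the set of edges on which $\bxi$ is negative, choosing the representative so that $S$ has the smaller total length. Then $|\bxi\cdot\bell|=1-2\sum_{e\in S}|e|$. The key structural fact is that $a_{[\bxi]}$ is the product of $c(\alpha)$ over the vertices lying on the boundary of $S$, meaning the vertices where exactly one incident edge is in $S$.

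\textbf{Part (a).} I will identify exactly which $[\bxi]$ contribute to the frequency $1-2\ell$. Those are the $S$ with total length $\ell$ (the choice of representative already accounts for total length $1-\ell$). Since $\ell\in\Lc^*(\Om)$ and lengths are positive, such an $S$ must be a single edge of length $\ell$. The coefficient of $\cos((1-2\ell)t)$ is therefore $\sum_{e\in E_\ell} c(\alpha_{e}^-)c(\alpha_{e}^+)$, where $\alpha_e^\pm$ are the angles at the endpoints of $e$, and this sum must vanish. With no odd angles every summand is nonzero, so $m\ge 2$ and the summands cannot all have the same sign. A summand is negative exactly when one endpoint angle lies in $\A^+$ and the other in $\A^-$. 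This yields the edges $E$ and $E'$. The smallest edge always belongs to $\Lc^*(\Om)$, so it cannot be unique.

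\textbf{Part (b).} I argue by contradiction. Assume $\Om$ is not equilateral and that for no $2\le k\le m$ is $k\ell$ a sum of one or more elements of $\Lc(\Om)$ other than $\ell$. Consider a set $S$ of total length $k\ell$ that contains $j$ edges of length $\ell$. Then $(k-j)\ell$ is a sum of other lengths. The case $k-j=1$ is excluded because $\ell\in\Lc^*(\Om)$, and the cases $2\le k-j\le k$ are excluded by the assumption. So for every $k\le m$ with $k\ell<\tfrac12$, the coefficient of $\cos((1-2k\ell)t)$ is $\sum_{S\subseteq E_\ell,\ |S|=k} a_S$, and it must vanish. I will encode this in the generating polynomial $F(z)=\sum_{S\subseteq E_\ell} a_S z^{|S|}$. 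The maximal runs of consecutive $\ell$-edges contribute independently, so $F$ factors over runs. Each run factor is computed by a $2\times 2$ transfer matrix: an endpoint vertex contributes $c(\alpha)$ at each in/out switch. Vanishing of all coefficients then forces $F\equiv 1$. If every run is a proper arc of the polygon, I will show that $F\equiv1$ is impossible when all $c(\alpha)\ne 0$. For isolated edges this is transparent: $F=\prod_e(1+a_ez)$ with every $a_e\ne 0$. For longer runs, the top coefficient of the run factor is a product of the two outer $c$'s, which is nonzero. The remaining case is a run that closes up around the whole polygon, which means $m=n$.

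I expect the main obstacle to be the bookkeeping around $k\ell\ge \tfrac12$. When $k\ell=\tfrac12$ the term is absorbed into the constant term. When $k\ell>\tfrac12$, the set $S$ with the smaller total length is the complement, which mixes in other edges. My first attempt will be a symmetric argument: run the elementary-symmetric computation only up to the largest $K$ with $K\ell<\tfrac12$. I would then use the top-degree and low-degree coefficients of each run factor to show that vanishing of the coefficients of degrees $1,\dots,K$ already forces a contradiction unless $\Om$ is equilateral. If that fails, I would instead apply the same analysis to the complementary collection of edges.

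\textbf{One odd angle.} For the case of one odd angle, I use that $c(\alpha)=0$ at the odd vertex. Every $[\bxi]$ that changes sign at that vertex therefore has $a_{[\bxi]}=0$. The surviving expansion is formally the expansion of $\omred$, with the two edges merged into a single curved edge and with no odd angles; this is the content of Lemma~\ref{om vs omred}(b), and it does not rely on admissibility. Running the above argument verbatim on $\omred$ then gives (a) and (b) with $\Lc(\omred)$, $\Lc^*(\omred)$, and $n-1$ in place of $\Lc(\Om)$, $\Lc^*(\Om)$, and $n$.
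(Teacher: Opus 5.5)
Your part (a) is the paper's argument. Passing to $\omred$ through the formal identity of Lemma~\ref{om vs omred}(b) is also what the paper does.

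The gap is in (b). Your contradiction rests on the top coefficient of $F$, namely $a_{E_\ell}\neq 0$. But you have only shown that the coefficient of $z^k$ must vanish when $k\ell<\frac12$. So the step ``vanishing of all coefficients forces $F\equiv 1$'' is unjustified as soon as $m\ell\geq\frac12$, and you leave exactly this case open. Your first attempt cannot close it. If $\ell>\frac14$ then $K=1$, and the only available constraint is $\sum_{e\in E_\ell}c(\al_e^-)c(\al_e^+)=0$. That is precisely the cancellation which (a) says \emph{does} occur with nonzero summands (think of $m=2$, $\ell>\frac14$). The fallback to ``the complementary collection'' is not specified.

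The missing observation is that normalizing $S$ to be the shorter side is a red herring. A class $[\bxi]$ contributes to the frequency $|1-2k\ell|$ if and only if one of its two complementary edge sets has total length exactly $k\ell$, whatever the sign of $1-2k\ell$. Your own combinatorial step then shows, under the contradiction hypothesis, that the only edge set of total length $m\ell$ is $E_\ell$ itself. This gives two cases:
\begin{itemize}
\item If $m<n$ and $m\ell\neq\frac12$, the frequency $|1-2m\ell|\notin\{0,1\}$ carries the single nonzero coefficient $a_{E_\ell}$, which is impossible.
\item If $m\ell=\frac12$, the edges of length different from $\ell$ sum to $\frac12=m\ell$, so $k=m$ works outright.
\end{itemize}

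This is the paper's proof, stated directly rather than by contradiction. Take $\bxi$ negative exactly on $E_\ell$ and note $a_{[\bxi]}\neq 0$. Any other class with the same frequency yields a set $S\neq E_\ell$ of total length $m\ell$ containing $q<m$ edges of length $\ell$. This gives $k=m-q$, and $k\geq 2$ because $\ell\in\Lc^*(\Om)$. The generating function, run factorization and transfer matrices are unnecessary.

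A smaller point: when you rerun (a) on $\omred$, the merged edge can have length $\frac12$, so $1-2\ell\neq 0$ is no longer automatic. The paper handles this by noting that $\frac12\notin\Lc^*(\omred)$ in that case.
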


\begin{proof}
By Theorem \ref{res:quadrilaterals}, $n \geq 5$.
(a) Suppose that $\Om$ has no odd angles, so $a_{[\bxi]}\neq 0$ for all $[\bxi]$.   Let $\ell\in\Lc^*(\Om)$.   Each edge $E$ of length $\ell$ contributes a term $c(\al)c(\al')\cos(|1-2\ell|t)$ to $P_{\Om}$ where $\al$ and $\al'$ are the angles at the endpoints of $E$.   Moreover, the definition of $\Lc^*$ implies that the only terms of this form arise from edges of length $\ell$.    Note that $\ell <\frac 1 2$ by the triangle inequality, so $1-2\ell\neq 0$.  The hypothesis that $\Om$ has the same characteristic polynomial as a smooth domain implies that all the terms of this form must cancel, and statement (a) follows.

(b) Suppose the multiplicity $m:=m(\ell)$ is less than $n$.  We need to show that there exists $k\leq m$ such that $k\ell$ is a sum of \emph{one or more} elements of $\Lc$ distinct from $\ell$. Let $E_{i_1}, \dots, E_{i_m}$ be the edges of length $\ell$.    Define $\bxi=(\xi_1,\dots, \xi_n)$ so that $\xi_{j}=-1$ for $j \in \{i_1,\dots, i_m\}$ while all other $\xi_j$ equal $1$.  Then $\cos(|\bxi\cdot \bell |t)= \cos(|1-2m\ell |t)$.   If $m\ell=\frac{1}{2}$, then the sum of all the edge lengths distinct from $\ell$ must equal $\frac 1 2$, so the desired conclusion holds with $k=m$.   Otherwise, cancellation of the terms of form $\cos(|1-2m\ell|t)$ in the characteristic polynomial requires that there exists some other subset $S$ of edges of $\Om$ for which the sum of the edge lengths is also $m\ell$.   Let $q$ be the number of edges in $S$ that have length $\ell$.   Necessarily $q<m$, and the conclusion of (b) holds with $k=m-q$.    
 
Next suppose that $\Om$ has one odd angle.  By Lemma \ref{om vs omred}(b), the characteristic polynomial of $\omred$ agrees with that of $\Om$ except possibly for the sign of the constant term.   The argument above does not use the constant term and thus goes through for $\Om$ replaced by $\omred$ once we observe the following:   In the proof of (a) we used the fact that no edge of $\Om$ can have length $\frac 1 2$.   This is no longer true for $\omred$ since the two edges of $\Om$ adjacent to the odd angle are combined to form a single edge of $\omred$.   However, if this edge, denoted $E$, has length $\frac 1 2$, then the sum of the remaining edges of $\omred$ also equals $\frac 1 2$.  Thus the edge length $\frac 1 2$ does not belong to $\Lc^*(\omred)$.
\end{proof}

\begin{remark}\label{rem:shortest}  Suppose that $\Om$ satisfies the hypotheses of Proposition~\ref{prop:n_smoothbdd2}.    If $\Om$ has no odd angles, statement (a) implies that the shortest edge length $\ell_0$ occurs with multiplicity at least two.  Moreover, using (b), the multiplicity will be at least three unless some edge of $\Om$ has length $2\ell_0$.   Indeed, minimality of the length $\ell_0$ implies that the only way to express $2\ell_0$ as a sum of edge lengths is as $\ell_0 +\ell_0$.     Consequently, the integer $k$ in (b) must be at least 3 and thus $m(\ell_0)\geq 3$.   If $\Om$ has an odd angle, the same conclusion holds with $\Om$ replaced by $\omred$.   

\end{remark}

\begin{cor}\label{cor:lc* size} Let $\Om$ be a convex $n$-gon with the same characteristic polynomial as a smoothly bounded simply-connected domain.   In the notation of Proposition~\ref{prop:n_smoothbdd2}, we have
\[|\Lc^*(\Om)|\leq 4.\]
Moreover, we have a smaller bound in the following cases:

\begin{equation*}
\begin{cases}
|\Lc^*(\Om)|\leq 2 \,\,\rm{if}\,\, n=5, 6 \,\,\rm{and}\,\Om\,\rm{has\, no \,odd \,angles,}\\
|\Lc^*(\Om)|\leq 3 \,\,\rm{if}\,\, n=7, 8\,\,{and}\,\Om\,\rm{has\, no \,odd \,angles,}\\
|\Lc^*(\Om)|\leq 3 \,\,\rm{if}\,\, n=5 \,\,{and}\,\,\,\Om\,\rm{\, has\, 1 \,odd \,angle}.
\end{cases}
\end{equation*}

\end{cor}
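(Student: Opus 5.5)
We would argue by counting, using Proposition~\ref{prop:n_smoothbdd2}(a) and Remark~\ref{rem:shortest}. Write $\Lc^*(\Om)=\{\lambda_1<\lambda_2<\dots<\lambda_r\}$, and set $N=n$ if $\Om$ has no odd angles. If $\Om$ has one odd angle, replace $\Om$ by $\omred$ and set $N=n-1$; by Lemma~\ref{lem:angle in +} there is at most one odd angle. Proposition~\ref{prop:n_smoothbdd2}(a) says each $\lambda_i$ occurs with multiplicity at least two in $\Lc$. Remark~\ref{rem:shortest} says the smallest one, $\lambda_1$, occurs at least three times unless some edge has length $2\lambda_1$. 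The plan is to play these multiplicities off against the total edge count $N$ and the perimeter constraint.

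\textbf{Step 1: the elements of $\Lc$ outside $\Lc^*$.} Every element of $\Lc$ not in $\Lc^*$ is a sum of at least two smaller edge lengths, so there is at least one such element whenever the lengths are not all minimal. The key extra input is perimeter. No edge of $\Om$ has length $\ge\frac12$. Moreover, each $\lambda_i$ occurs at least twice, so $2\sum_i\lambda_i\le 1$. I would also use Proposition~\ref{prop:n_smoothbdd2}(b): for each $\lambda_i$ with multiplicity $m_i<N$, some $k\lambda_i$ with $2\le k\le m_i$ must equal a sum of other edge lengths. For the smallest element this forces either $m_1\ge3$ or an edge of length $2\lambda_1$.

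\textbf{Step 2: the count.} Let $r=|\Lc^*|$. The multiplicity count gives $N\ge 2r+(\text{extra})$, where the extra is at least one. It comes either from $m_1\ge3$ or from an edge of length $2\lambda_1$, which is not in $\Lc^*$ since $2\lambda_1=\lambda_1+\lambda_1$ (and $\Om$ is not equilateral unless $r=1$). This yields $2r+1\le N$:
\begin{itemize}
\item for $N=5,6$, we get $r\le2$;
\item for $N=7,8$, we get $r\le3$;
\item for one odd angle with $n=5$, we have $N=4$ and get $r\le1$, which is stronger than the stated $3$. I would double-check here whether the $\frac12$-length edge of $\omred$ changes things. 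Most likely the authors used a cruder bound, so the stated bound follows.
\end{itemize}

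\textbf{Step 3: the uniform bound $|\Lc^*(\Om)|\le4$.} This is the main obstacle, since for large $n$ counting alone does not bound $r$. I would use the cancellation structure instead. Proposition~\ref{prop:n_smoothbdd2}(a) requires edges of length $\lambda_i$ with exactly one endpoint angle in $\A^+$. By Lemma~\ref{lem:angle in +} and convexity, at most two angles lie outside $\A^-$, hence at most two angles lie in $\A^+$. Each such angle is an endpoint of exactly two edges, so at most four edges have an endpoint in $\A^+$. Since the distinct $\lambda_i$ each need their own such edge $E$, we get $r\le4$. In the one-odd-angle case I would apply the same argument to $\omred$, checking that the angles of $\omred$ are a subset of those of $\Om$, so the at-most-two-in-$\A^+$ count persists. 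I expect this endpoint-counting to be the real content, with the small-$n$ refinements in Step 2 being routine.
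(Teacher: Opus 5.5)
Your treatment of the case with no odd angles is correct and is essentially the paper's argument. The endpoint count at the at most two vertices in $\A^+$, combined with Proposition~\ref{prop:n_smoothbdd2}(a), gives $|\Lc^*(\Om)|\le 4$. The multiplicity count with Remark~\ref{rem:shortest} gives $2r+1\le n$, i.e.\ $|\Lc^*(\Om)|\le 1+\frac{n-3}{2}$.

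The gap is in the one-odd-angle case. The corollary bounds $|\Lc^*(\Om)|$, not $|\Lc^*(\omred)|$, even when $\Om$ has an odd angle; the paper stresses this right after the statement. By ``replacing $\Om$ by $\omred$'' you only ever bound $|\Lc^*(\omred)|$. Your $r\le 1$ for $n=5$ is a statement about $\omred$. The mismatch with the stated $3$ does not mean the authors used a cruder bound.

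The two edges of $\Om$ meeting at the odd angle, with lengths $\ell_1,\ell_2$, are not edges of $\omred$; only their sum is. Either of them may lie in $\Lc^*(\Om)$. The missing step is the inclusion $\Lc^*(\Om)\subseteq \Lc^*(\omred)\cup\{\ell_1,\ell_2\}$. To see it, take any other $x\in\Lc^*(\Om)$; it is a length in $\Lc(\omred)$. If $x$ were a sum of two or more smaller elements of $\Lc(\omred)$, replace a summand $\ell_1+\ell_2$ (if present) by $\ell_1$ and $\ell_2$. This expresses $x$ as such a sum in $\Lc(\Om)$, a contradiction. Hence $|\Lc^*(\Om)|\le |\Lc^*(\omred)|+2$, and with your $|\Lc^*(\omred)|\le 1$ this gives the bound $3$ for $n=5$.

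Your Step~3 is also too weak in this case. Keeping ``at most two angles in $\A^+$'' for $\omred$ yields only $|\Lc^*(\omred)|\le 4$, hence $|\Lc^*(\Om)|\le 6$ after the transfer. You need the sharper fact that when $\Om$ has an odd angle, at most one angle lies in $\A^+$. This holds because odd angles and angles in $\A^+$ are all at most $\frac{\pi}{3}$, and a convex polygon other than the equilateral triangle has at most two such angles. Then at most two edges of $\omred$ have an endpoint in $\A^+$, so Proposition~\ref{prop:n_smoothbdd2}(a) gives $|\Lc^*(\omred)|\le 2$, and the transfer gives $|\Lc^*(\Om)|\le 4$.
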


We emphasize that we are counting the elements of $\Lc^*(\Om)$, not of $\Lc^*(\omred)$ even when $\Om$ has an odd angle.  However, the proof below also yields a bound on the latter, which is sometimes smaller than the bound on the former.

\begin{proof}
By Theorem \ref{res:quadrilaterals}, $n \geq 5$. First suppose that $\Om$ has no odd angles.     At most two interior angles lie in $\A^+$, since such angles are less than $\frac \pi 3$.  Thus at most four edges are adjacent to a vertex with angle in $\A^+$, so the bound $|\Lc^*(\Om)|\leq 4$ follows from Proposition~\ref{prop:n_smoothbdd2}(a).   

To obtain the smaller bounds when $5\leq n\leq 8$, the shortest edge length $\ell_0$ accounts for one element of $\Lc^*$, and Remark~\ref{rem:shortest} tells us that there are at most $n-3$ additional elements. Applying Proposition~\ref{prop:n_smoothbdd2}(b), we thus obtain the bound $|\Lc^*(\Om)|\leq 1 +\frac{n-3}{2}$, which outperforms the bound $|\Lc^*(\Om)|\leq 4$ when $n\leq 8$.   

Next suppose that $\Om$ has one odd angle.  We first bound $|\Lc^*(\omred)|$.  Since $\Om$ has an odd angle, at most one interior angle of $\Om$ -- and thus at most one interior angle of $\omred$ -- can lie in $\A^+$. Also, since $\omred$ has only $n-1$ edges, by an argument analogous to the one above, we obtain
\[|\Lc^*(\omred)|\leq \min\left\{2, 1+\frac{n-4}{2}\right\}.\]
In particular $|\Lc^*(\omred)|=1$ when $n=5$ and $|\Lc^*(\omred)|\leq 2$ for larger $n$.

Finally, the collections of edge lengths $\Lc(\Om)$ and $\Lc(\omred)$ differ only in that the lengths of the two edges of $\Om$ adjacent to the odd angle are not included in $\Lc(\omred)$, only their sum.    It follows easily that $|\Lc^*(\Om)|\leq |\Lc^*(\omred)|+2$.  This completes the proof.
\end{proof}

We next use the fact that the constant term in $P_{\Om}$ must equal $-1$.  

\begin{prop} \label{prop:n_smoothbdd1}
Let $\Om$ be a convex $n$-gon of perimeter one.  Suppose that $\Om$ has the same characteristic polynomial as a smoothly bounded simply-connected domain. 
Then either:
\begin{itemize}
\item[(i)] $\Om$ has two even angles of opposite parity that separate $\pa\Om$ into two components of length $\frac 1 2$, 
\item[(ii)] or there exist at least two equivalence classes $[\bxi], [\bxi']$ of elements of $\{\pm 1\}^n$ such that $\bxi\cdot\bell =0=\bxi'\cdot\bell$ and such that $a_{[\xi]}, a_{[\xi']}$ are both negative.
\end{itemize}

\end{prop}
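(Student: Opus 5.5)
The plan is to compare the constant terms. If $P_\Om(t)=\cos(t)-1$, then the constant term of $P_\Om$ gives
\[-1=\sum_{[\bxi]\in A} a_{[\bxi]} - \prod_{j=1}^n s(\al_j),\qquad A:=\{[\bxi]:\ \bxi\cdot\bell=0\},\]
since every $[\bxi]$ with $\bxi\cdot\bell\neq 0$ contributes only to nonconstant cosine terms. By Theorem~\ref{res:quadrilaterals} we have $n\ge 5$. By Lemma~\ref{lem:angle in +}, $\Om$ has at most one odd angle, so $|\prod_j s(\al_j)|\le 1$. We also have $|a_{[\bxi]}|\le 1$ for every $[\bxi]$.

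The argument splits according to $p:=\prod_j s(\al_j)$.

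If $p<1$, then $\sum_{A}a_{[\bxi]}=p-1<0$, so at least one $a_{[\bxi]}$ with $[\bxi]\in A$ is negative. To get two such classes, suppose only one class $[\bxi]\in A$ has $a_{[\bxi]}<0$. Then $a_{[\bxi]}\le p-1$. Since $a_{[\bxi]}\ge -1$, this forces $p\ge 0$ and $a_{[\bxi]}\le -(1-p)$.

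I would rule this out as follows. Suppose the single negative class has $a_{[\bxi]}=-1$. Then every factor $c(\al_j)$ appearing in it is $\pm1$, so those vertices are even angles. The product of their $c$-values is $-1$, so an odd number of them have parity $-1$. In particular $\Om$ has even angles, so $p=0$. Then $\sum_A a=-1$ with a single negative term equal to $-1$ forces all other terms in $A$ to vanish.

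If instead $-1<a_{[\bxi]}<0$, then $p>0$, so $\Om$ has no even angles. I would then expect to reach a contradiction via a bound such as $|a_{[\bxi]}|\le |s|$-type inequalities. This is the main obstacle. The natural first attempt is the identity $c(\al)^2+s(\al)^2=1$, combined with the observation that $[\bxi]$ uses at least two sign changes. From this I would aim to show $|a_{[\bxi]}|+|p|<1$ unless two factors $|c|$ equal $1$, i.e., unless even angles are present. Concretely, pick two vertices $j,k$ where $\bxi$ changes sign. Then
\[|a_{[\bxi]}|+|p|\le |c_j||c_k|+|s_j||s_k|\le 1,\]
by Cauchy--Schwarz, with equality only when $(|c_j|,|s_j|)=(|c_k|,|s_k|)$ and all other $|c|$ and $|s|$ factors equal $1$. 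The last condition is impossible for $n\ge5$, since some other vertex would need $|c|=|s|=1$. Hence $a_{[\bxi]}>-(1-p)$, contradicting $a_{[\bxi]}\le -(1-p)$. The same Cauchy--Schwarz estimate also handles the subcase $a_{[\bxi]}=-1$ above uniformly.

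The remaining case is $a_{[\bxi]}=-1$ with $p=0$, which must be shown to be exactly alternative (i). Here $\bxi\cdot\bell=0$ means the sign changes of $\bxi$ cut $\pa\Om$ into arcs, and the arcs with $\xi=+1$ and those with $\xi=-1$ each have total length $\tfrac12$. All the sign-change vertices are even angles. By Remark~\ref{ss:cpguide}, a convex non-rectangle has at most three even angles, and $n\ge5$ excludes rectangles, so the number of sign changes is at most three. Since sign changes come in even numbers, there are exactly two. Their product $-1$ means the two even angles have opposite parity. This is precisely (i). If instead $p=1$, then $\sum_A a=0$, and the case of a single negative class is excluded by the same inequality.
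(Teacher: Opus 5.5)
Your proposal is correct and is essentially the paper's proof: you compare constant terms, obtain (i) when the lone negative class has $a_{[\bxi]}=-1$ (two even angles of opposite parity, so $\prod_j s(\al_j)=0$), and rule out a lone negative class when $\prod_j s(\al_j)>0$ via $|c_jc_k|+|s_js_k|\le 1$, where your Cauchy--Schwarz step plays the role of the paper's cosine-addition bound. Two small repairs are needed: equality there is impossible because the other $n-2\ge 3$ vertices would all have to be odd (your remark about a vertex needing $|c|=|s|=1$ only covers $\bxi$ with at least four sign changes), and the case $\prod_j s(\al_j)=1$ should be dismissed outright since it forces every angle to be odd---as written you only exclude a single negative class there, not the possibility of having none.
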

We note that the two possibilities are not mutually exclusive. 

\begin{proof} In view of Theorem~\ref{res:quadrilaterals}, we may assume that $n\geq 5$.   
The constant term in $P_{\Om}$ is given by 
\beq\label{const term}
-1= \sum_{\{[\bxi]: \,\bxi\cdot\bell =0\}}a_{[\bxi]} - \prod_{i=1}^n\,s(\al_i).
\eeq
 If $\prod_{i=1}^n\,s(\al_i)<0$, then condition (ii) must hold since $|a_{[\bxi]}|\leq 1$ for all $\bxi$.   If $\prod_{i=1}^n\,s(\al_i)=0$, i.e., if $\Om$ has at least one even angle, then either (ii) holds or else there exists $\bxi$ such that $\bxi\cdot\bell=0$ and $a_{[\bxi]} =-1$.   In the latter case, each of the cosine factors $c(\al_j)$ that occur in $a_{[\bxi]}$ have magnitude one and thus correspond to even angles $\al_j$.    Since $\Om$ can't have four even angles, there must be exactly two even angles contributing to $a_{[\bxi]}$ and they must have opposite parity.   The condition $\bxi\cdot\bell=0$ implies that these two even angles separate $\pa\Om$ into two components of length $\frac 1 2$.

We are left with the case $\prod_{i=1}^n\,s(\al_i)>0$.      Suppose there is only one equivalence class $[\bxi]$ such that $\bxi\cdot\bell =0$ and $a_{[\bxi]}<0$.   Writing 
\[a_{[\bxi]}= c(\al_{i_1})\dots c(\al_{i_k})\]
for some $1\leq i_1\dots <i_k$ with $k\geq 2$, 
we have
\[ |a_{[\bxi]}|\leq |c(\al_{i_1})c(\al_{i_2})|.\]
Since $\Om$ has at most one odd angle by Lemma \ref{lem:angle in +}, all except at most one of the factors $s(\al_j)$ satisfy $|s(\al_j)|<1$.    Thus
\[ \left|\prod_{i=1}^n\,s(\al_i)\right| < |s(\al_{i_1})s(\al_{i_2})|. \]
Consequently, Equation~\eqref{const term} yields
\[1 = |a_{[\bxi]} - \prod_{i=1}^n\,s(\al_i)| < |c(\al_{i_1})c(\al_{i_2})| + |s(\al_{i_1})s(\al_{i_2})|. \]
Thus for a suitable choice of $\pm$, we have 
\[1 < \left |\cos\left(\frac{\pi^2}{2\al_{i_1}}\pm \frac{\pi^2}{2\al_{i_2}}\right) \right|,\]
a contradiction.   
\end{proof}

\begin{rem}\label{rem: sum half} Proposition~\ref{prop:n_smoothbdd1} implies that one can divide the edge lengths $\Lc(\Om)$ into two disjoint subcollections, $\Lc(\Om)= \Lc(\Om_1)\sqcup \Lc(\Om_2)$ in such a way that the sum of the lengths in each subcollection $\Lc(\Om_j)$ is exactly $\frac 1 2$.  
\end{rem}

\begin{cor}\label{cor:equilateral}
Convex equilateral polygons with an odd number of sides never have the same characteristic polynomial as a simply-connected smoothly bounded domain. 
\end{cor}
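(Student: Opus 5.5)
The natural route is Remark~\ref{rem: sum half}, i.e.\ Proposition~\ref{prop:n_smoothbdd1}: the edge lengths of $\Om$ must split into two subcollections each summing to $\frac12$. For an equilateral $n$-gon of perimeter one every edge has length $\frac1n$. A subcollection of $k$ edges then has total length $\frac kn$, and this equals $\frac12$ only if $n=2k$, which is impossible for odd $n$. So the argument amounts to checking that each alternative of Proposition~\ref{prop:n_smoothbdd1} produces such a splitting, and then invoking the parity obstruction.

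I would run the check case by case. In alternative (ii) there is some $\bxi$ with $\bxi\cdot\bell=0$. Writing $\bxi\cdot\bell=\frac1n\sum_j\xi_j$, we would need $\sum_j\xi_j=0$. That sum has the same parity as $n$ because each $\xi_j=\pm1$, so it is odd and cannot vanish. In alternative (i), two vertices separate $\pa\Om$ into two arcs of length $\frac12$. Each arc is a union of $k$ whole edges, so it has length $\frac kn\neq\frac12$, again a contradiction.

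The hypotheses of Proposition~\ref{prop:n_smoothbdd1} are just that $\Om$ is a convex $n$-gon of perimeter one sharing its characteristic polynomial with a smooth domain, so they are met. If $\Om$ has odd angles, the statement still concerns $\Om$ itself and not $\omred$, so nothing changes. For $n=3$ the result is also covered by Theorem~\ref{res:quadrilaterals}; either way, the argument does not depend on $n\geq5$.

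I expect no step here to be a real obstacle. The one point needing care is confirming that both alternatives of Proposition~\ref{prop:n_smoothbdd1} give a vanishing signed sum of edge lengths, and that the two arcs in (i) are unions of whole edges. Once the problem is reduced to $\sum_j\xi_j=0$ with $n$ odd, it is immediate.
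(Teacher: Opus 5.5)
Your proposal is correct and is essentially the paper's argument: the corollary is stated as an immediate consequence of Remark~\ref{rem: sum half}, i.e.\ Proposition~\ref{prop:n_smoothbdd1}, which forces the edges to split into two groups of total length $\frac12$. Your parity observation, $\sum_j \xi_j \equiv n \pmod 2$ for $\bxi \in \{\pm 1\}^n$, is exactly why no such splitting exists when all edges have length $\frac1n$ and $n$ is odd.
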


\begin{cor}\label{cor:not 1/2} If a convex $n$-gon $\Om$ of perimeter one with one odd angle has the same characteristic polynomial as a smoothly bounded simply-connected domain, then the sum of the lengths of the two edges that meet at the odd angle is strictly less than $\frac 1 2$.   

\end{cor}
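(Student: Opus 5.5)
We argue by contradiction. Let $\Om$ be a convex $n$-gon of perimeter one with exactly one odd angle, say $\alpha_1$, so that the two edges meeting there are $\ell_1$ and $\ell_2$. Suppose that $P_\Om(t)=\cos(t)-1$ and that $\ell_1+\ell_2\geq \frac12$. By Theorem~\ref{res:quadrilaterals} we have $n\geq 5$, and by Lemma~\ref{lem:angle in +} the angle $\alpha_1$ is the only odd angle. The plan is to exclude $\ell_1+\ell_2>\frac12$ and $\ell_1+\ell_2=\frac12$ separately, using Remark~\ref{rem: sum half} (equivalently Proposition~\ref{prop:n_smoothbdd1}).

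\emph{Case $\ell_1+\ell_2>\frac12$.} Remark~\ref{rem: sum half} splits the edge lengths into two subcollections, each summing to exactly $\frac12$. The edges $\ell_1$ and $\ell_2$ cannot lie in the same subcollection, since their sum already exceeds $\frac12$. So any vector $\bxi$ with $\bxi\cdot\bell=0$ must have $\xi_1\neq\xi_2$. There is then a sign change at vertex $\alpha_1$, so $c(\alpha_1)$ divides $a_{[\bxi]}$, which forces $a_{[\bxi]}=0$. Every term of the constant part coming from vectors with $\bxi\cdot\bell=0$ therefore vanishes. The constant term of $P_\Om$ reduces to $-\prod_j s(\alpha_j)$, which must equal $-1$. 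This requires every $|s(\alpha_j)|=1$, i.e., every angle is odd. That is impossible for $n\geq5$, and in any case contradicts having exactly one odd angle.

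\emph{Case $\ell_1+\ell_2=\frac12$.} Now the remaining edges $\ell_3,\dots,\ell_n$ also sum to $\frac12$. Pass to $\omred$ via Lemma~\ref{om vs omred}(b); its characteristic polynomial is $\cos(t)\pm1$. The merged curved edge of $\omred$ has length $\frac12$. The aim is to show that the constant term cannot equal $-1$. The available zero-sum vectors are exactly those with $\xi_1=\xi_2$ and $\bxi\cdot\bell=0$. One candidate is the split $\{\ell_1,\ell_2\}$ versus the rest; its coefficient is $c(\alpha_2)c(\alpha_n)$, where $\alpha_2$ and $\alpha_n$ are the vertices at the ends of the merged edge. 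Other candidates are splits that put $\ell_1,\ell_2$ together with some subset $S$ of the other edges, balanced against the complementary subset. Since $\ell_1+\ell_2=\frac12$, such a split would need the edges in $S$ to sum to $0$, so $S$ is empty. Hence, for $\omred$, the only zero-sum class is the one separating the merged edge from the rest, and the constant term of $P_\Om$ is
\[
c(\alpha_2)c(\alpha_n)\mp\prod_{j\neq1}s(\alpha_j).
\]

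Setting this equal to $-1$, the argument at the end of the proof of Proposition~\ref{prop:n_smoothbdd1} gives
\[
1\leq |c(\alpha_2)c(\alpha_n)|+|s(\alpha_2)s(\alpha_n)|\prod_{j\neq1,2,n}|s(\alpha_j)|,
\]
and equality is possible only if $\prod_{j\neq 1,2,n}|s(\alpha_j)|=1$, or if the sine terms vanish and $|c(\alpha_2)c(\alpha_n)|=1$. The first option would make all the remaining angles odd, which is impossible. So $\alpha_2$ and $\alpha_n$ are even angles of opposite parity, with $c(\alpha_2)c(\alpha_n)=-1$. This is the configuration of Proposition~\ref{prop:n_smoothbdd1}(i).

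Ruling out this last configuration is the main obstacle. The angles are $\alpha_1$ odd, and $\alpha_2$, $\alpha_n$ even of opposite parity, so one of them is $\le\frac{\pi}{4}$. Together with $\alpha_1\leq\frac{\pi}{3}$, this gives at least two angles $\le\frac{\pi}{3}$. For $n\geq5$ the angle sum then forces all other angles into $\A^-$, and it forces the even angle of parity $-1$ to be $\frac{\pi}{2}$ (or $\frac{\pi}{6}$, etc.). I would try to kill this configuration with Lemma~\ref{lem:angle in +}, which requires some angle in $\A^+$. Angles in $\A^+$ lie in $(\frac{\pi}{5},\frac{\pi}{3})$ or smaller intervals, so an angle count would exclude one. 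Even angles are not in $\A^+$, since $c=\pm1$ there, and $\alpha_1$ is odd. So some third angle must be below $\frac{\pi}{3}$, and three small angles plus the convexity bound should contradict $n\geq5$; checking this inequality carefully is where I expect the work. If the inequality is not tight enough, I would fall back to cancellation of the nonconstant cosine terms, as in Proposition~\ref{prop:n_smoothbdd2}, applied to $\omred$.
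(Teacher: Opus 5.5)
Your first case ($\ell_1+\ell_2>\frac12$) is correct. So is your reduction of the case $\ell_1+\ell_2=\frac12$ to the configuration in which $\alpha_2$ and $\alpha_n$ are even angles of opposite parity; this is what the paper gets from Proposition~\ref{prop:n_smoothbdd1}.

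\textbf{The gap.} The gap is in excluding that configuration. Your plan rests on the claim that even angles do not lie in $\A^+$, and this is false. $\A^+=\{\alpha: c(\alpha)>0\}$ contains every even angle $\frac{\pi}{4m}$, since there $c=+1$; for instance $\frac{\pi}{4}\in(\frac{\pi}{5},\frac{\pi}{3})$. One of $\alpha_2,\alpha_n$ has parity $+1$, so it already lies in $\A^+$. Lemma~\ref{lem:angle in +} is therefore satisfied and forces no third small angle. The configuration that actually has to be excluded is $\alpha_1=\frac{\pi}{3}$, $\{\alpha_2,\alpha_n\}=\{\frac{\pi}{2},\frac{\pi}{4}\}$, with the remaining angles summing to $(n-3)\pi-\frac{\pi}{12}$. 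This is consistent with the angle sum, with convexity, and with Lemma~\ref{lem:angle in +}.

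\textbf{The fallback also fails.} Cancellation of the cosine terms of $P_{\omred}$ cannot finish the job. For $n=6$, take:
any $\ell_1,\ell_2>0$ with $\ell_1+\ell_2=\frac12$; $(\ell_3,\ell_4,\ell_5,\ell_6)=(\frac1{12},\frac16,\frac16,\frac1{12})$; $\alpha_2=\frac{\pi}{2}$ and $\alpha_6=\frac{\pi}{4}$; and $\alpha_3=\alpha_5$, $\alpha_4$ in $(\frac{11\pi}{12},\pi)$ with $c(\alpha_3)=2c(\alpha_4)$ and total angle sum $4\pi$. Such angles exist by the intermediate value theorem. A direct check of the sixteen classes with $\xi_1=\xi_2$ shows that everything except $\cos t$ cancels:
the terms at frequencies $\frac16$ and $\frac56$ cancel because $c(\alpha_3)=c(\alpha_5)$; those at $\frac13$ and $\frac23$ cancel because $c(\alpha_3)=2c(\alpha_4)$; those at $\frac12$ cancel in pairs. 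The constant term is $-1$. So this length and angle data produces exactly $\cos(t)-1$. It is ruled out only because no Euclidean hexagon realizes it.

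\textbf{The missing idea is metric.} The paper cuts $\Om$ along the diagonal from $v_n$ to $v_2$, of length $s$. In the triangle $v_nv_1v_2$ the base angles satisfy $\beta_2<\alpha_2$ and $\beta_n<\alpha_n$. This forces $\alpha_1=\frac{\pi}{3}$ and $\{\alpha_2,\alpha_n\}=\{\frac{\pi}{2},\frac{\pi}{4}\}$. The Law of Sines together with $\ell_1+\ell_2=\frac12$ then gives $s<\frac{\sqrt3}{4}$. The complementary convex $(n-1)$-gon has base $s$, base angles summing to $\frac{3\pi}{4}-\frac{2\pi}{3}=\frac{\pi}{12}$, and remaining edges of total length $\frac12$. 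It is so thin that $s>\frac12\cos\frac{\pi}{12}>\frac{\sqrt3}{4}$, a contradiction. By the example above, some argument using the closing of the polygon is unavoidable here.
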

\begin{proof} By Theorem \ref{res:quadrilaterals}, such an $n$-gon has $n \geq 5$. Assume $\Om$ satisfies the hypotheses.
Denote the odd angle by $\alpha_1$ and the corresponding vertex by $v_1$, so the lengths of the adjacent edges are denoted $\ell_1$ and $\ell_2$.  With our usual cyclic notation, edge $\ell_1$ has endpoints $v_1$ and $v_n$, and $\ell_2$ has endpoints $v_1$ and  $v_2$.  
The fact that $\alpha_1$ is odd implies that 
\[\xi_1=\xi_2\rm{\,\,for \,\,every\,} \bxi\in \{\pm 1\}^n\,\,\text{such\,\,that}\,\,a_{[\bxi]} \neq 0.\]   

If  $\ell_1 +\ell_2>\frac 1 2$, it follows that there does not exist $\bxi$ that simultaneously satisfies $a_{[\bxi]} \neq 0$ and $\bxi\cdot\bell =0$, contradicting Proposition~\ref{prop:n_smoothbdd1}. 
Next suppose that $\ell_1+\ell_2 =\frac 1 2$.  
Then $[(-1,-1, 1, 1,\dots, 1)]$ is the unique equivalence class $[\bxi]$ satisfying $a_{[\bxi]} \neq 0$ and $\bxi\cdot\bell =0$.  Proposition~\ref{prop:n_smoothbdd1} thus implies that $\al_n$ and $\al_2$ are even angles of opposite parity, say $\al_n=\frac{\pi}{4m}$ and $\alpha_2=\frac{\pi}{4p -2}$ for some $m,p \in \Z^+$.    The line segment from $v_n$ to $v_2$ decomposes $\Om$ into a triangle $T$ with vertices $v_n,v_1, v_2$ and a convex $(n-1)$-gon $\Om'$ with vertices $v_2, v_3, \dots v_n$.  Let $s$ denote the length of this line segment, so $T$ has edge lengths $\ell_1$, $\ell_2$ and $s$.   Denote the interior angles of $T$ by $\beta_1 \,(=\al_1)$, $\beta_2$ and $\beta_n$.   We have $\beta_2<\al_2\leq \frac{\pi}{2}$, and $\beta_n<\al_n\leq \frac{\pi}{4}$.  Since $\alpha_1$ is odd and $T$ has angle sum $\pi$, we must have $\alpha_1=\frac{\pi}{3}$, $\al_2=\frac{\pi}{2}$ and $\al_n=\frac{\pi}{4}$.    The fact that $\beta_2<\frac{\pi}{2}$ also implies that $\beta_n>\frac{\pi}{6}$.   

We find an upper bound on $s$ using that $\beta_n \in (\frac{\pi}{6}, \frac{\pi}{4}), \beta_2 \in (\frac{5\pi}{12}, \frac{\pi}{2})$, and $\ell_1 + \ell_2 = \frac{1}{2}$.  Given our labeling conventions and these angle restrictions, we know that $\ell_1 > \ell_2$, so we have $\ell_2 \in (0, \frac{1}{4}).$  The Law of Sines gives $s = \frac{\sqrt{3} \ell_2}{2 \sin \beta_n}$.  Defining $s = f(\ell, \beta) = \frac{\sqrt{3} \ell}{2 \sin \beta}$, we find that $\frac{\partial f}{\partial \ell} >0$ and $\frac{\partial f}{\partial \beta} < 0$ on the interior of our domain, so there are no interior critical points for $s$.  Checking the boundary, we see that the maximum value of $s=\frac{\sqrt{3}}{4}\approx 0.43$ occurs when $\ell=\frac{1}{4}$ and $\beta = \frac{\pi}{6}$.  Although this combination of values does not correspond to an actual triangle satisfying our constraints, we have found a sufficient upper bound on $s$. 

Now consider the $(n-1)$-gon $\Om'$.   The length of its ``base'' joining vertices $v_n$ and $v_2$ is $s$.   Let $\al_n'=\al_n-\beta_n$ and $\al_2'=\al_2-\beta_2$ denote its interior angles at these vertices.   Since $\beta_n+\beta_2=\pi-\al_1=\frac{2\pi}{3}$, and $\al_2+\al_n =\frac{3\pi}{4}$, we have 
\[\al_n'+\al_2' =\frac{3\pi}{4}-\frac{2\pi}{3} =\frac{\pi}{12}.\]
In particular, $\Om'$ is an extremely thin $(n-1)$-gon.  The sum of the remaining $n-3$ angles is $(n-3)\pi -\frac{\pi}{12}$.     On the other hand, the lengths of the edges other than the ``base'' edge sum to $\ell_3+\ell_4+\dots+\ell_n=1-(\ell_1+\ell_2)=\frac{1}{2}$.   
The base $s$ must then satisfy $s>\frac{1}{2}\cos(\pi/12)$, which is larger than $0.43$.  This completes the proof. 
\end{proof}

\begin{exa} \label{exa:penta}    
Let $\Om$ be a pentagon of perimeter one with the same characteristic polynomial as a simply-connected domain with smooth boundary.    
Assume that $\Om$ has no odd angles.  
Using Proposition~\ref{prop:n_smoothbdd2}, Corollary~\ref{cor:lc* size}, and Remarks~\ref{rem:shortest} and \ref{rem: sum half}, along with the fact that no individual edge can have length $\frac 1 2$, one verifies that the list below exhausts all the possibilities for $\mathcal L(\Omega)$.  We leave the case that $|\Lc^*(\Omega)|=1$ to the reader and give a brief explanation for the case $|\Lc^*(\Om)|=2$.

\smallskip
\noindent\underline{\rm{Possibilities for which $|\Lc^*(\Om)|=1$ in the notation of the corollary:}}
\begin{itemize}
\item $\left(\frac{1}{6}, \frac{1}{6}, \frac{1}{6}, \frac{1}{6}, \frac{1}{3}\right)$
\item $\left(\frac{1}{8}, \frac{1}{8}, \frac{1}{8}, \frac{2}{8}, \frac{3}{8}\right)$
\item $\left(\frac{1}{10}, \frac{1}{10}, \frac{1}{10}, \frac{3}{10}, \frac{4}{10}\right)$
\item $\left(\frac{1}{8}, \frac{1}{8}, \frac{2}{8}, \frac{2}{8}, \frac{2}{8}\right)$
\item $\left(\frac{1}{10}, \frac{1}{10}, \frac{2}{10}, \frac{2}{10}, \frac{4}{10}\right)$
\item $\left(\frac{1}{10}, \frac{1}{10}, \frac{2}{10}, \frac{3}{10}, \frac{3}{10}\right)$
\item $\left(\frac{1}{12}, \frac{1}{12}, \frac{2}{12}, \frac{3}{12}, \frac{5}{12}\right)$
\item $\left( \frac{1}{12}, \frac{1}{12}, \frac{2}{12}, \frac{4}{12}, \frac{4}{12} \right)$
\item $\left( \frac{1}{14}, \frac{1}{14}, \frac{1}{7}, \frac{2}{7}, \frac{3}{7} \right)$
\end{itemize}

\smallskip
\noindent\underline{\rm{Possibilities for which $|\Lc^*(\Om)|=2$ in the notation of the corollary:}}
\begin{itemize}
\item $\left(\frac{1}{6}, \frac{1}{6}, \frac{1}{6}, \frac{1}{4}, \frac{1}{4}\right)$
\item $\left( \frac{1}{7},\frac{1}{7}, \frac{3}{14}, \frac{3}{14}, \frac{2}{7} \right)$
\end{itemize}
To see that this lists exhausts all possibilities, write $\Lc^*(\Om)=\{a,b\}$ with $a<b$. If $a$ has multiplicity 3, then $\Lc(\Om)=\{a,a,a,b,b\}$.   By Proposition~\ref{prop:n_smoothbdd2}, $2b$ must be a sum of entries other than $b$.  The only possibility is that $3a=2b =\frac 1 2$, yielding the first item.   If $a$ has multiplicity 2, then again applying Proposition~\ref{prop:n_smoothbdd2}, we have $\Lc(\Om)=\{a, a, 2a, b, b\}$ (not necessarily in increasing order). We have $4a+2b =1$, so $2a+b =\frac 1 2$.  Moreover, $2b$ must be a sum of some of the entries not equal to $b$.   Since $b$ is not a multiple of $a$, the only possibility is that $2b=3a$, resulting in item 2.
\end{exa}

We next consider pentagons with an odd angle.
\begin{prop} \label{res:pentagons}
Assume that $\Omega$ is a pentagon of perimeter one with the same characteristic polynomial as a simply-connected smoothly bounded domain. Assume $\Om$ has an odd angle.  Choosing the cyclic labelling so that  $\alpha_1$ is odd, then one of the following assertions holds:
\[\begin{array}{cll}
 \ell_3=\ell_4=\ell_5=\frac{1}{4}& \mbox{ and }& \ell_1+\ell_2=\frac{1}{4},  \\ 
 \ell_3=\ell_5=\frac{1}{6}& \mbox{ and } &\ell_4=\ell_1+\ell_2=\frac{1}{3},\\
 \ell_3=\ell_5=\frac{1}{3}& \mbox{ and }& \ell_4=\ell_1+\ell_2=\frac{1}{6}.\\
\end{array}\]
\end{prop}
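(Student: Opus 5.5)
We will work with the reduced domain $\omred$, a curvilinear quadrilateral with edges of lengths $L:=\ell_1+\ell_2$ (the curved edge), $\ell_3,\ell_4,\ell_5$ and angles $\al_2,\al_3,\al_4,\al_5$. By Lemma~\ref{lem:angle in +}, $\al_1$ is the only odd angle, so no $c(\al_j)$ with $j\neq 1$ vanishes. By Lemma~\ref{om vs omred}(b), $P_{\omred}$ and $P_\Om$ differ at most in the sign of the constant term, so $P_{\omred}(t)-\cos(t)$ is constant. Corollary~\ref{cor:not 1/2} gives $L<\frac12$, and the triangle inequality gives $\ell_j<\frac12$ for $j=3,4,5$. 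All eight coefficients $a_{[\bxi]}$ of $\omred$ are nonzero. The plan is to enumerate the possible coincidences among the frequencies $|\bxi\cdot\bell(\omred)|$ that let all non-constant terms other than $\cos t$ cancel or vanish, and to combine this with Proposition~\ref{prop:n_smoothbdd1} and Remark~\ref{rem: sum half}.

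\textbf{Step 1.} By Proposition~\ref{prop:n_smoothbdd1}, transported to $\omred$, there is a $\bxi$ with two $+$ and two $-$ entries and $\bxi\cdot\bell(\omred)=0$. Here $\xi_1=\xi_2$ is forced in $\Om$, and both edges at $\al_1$ belong to one half. So the four edges of $\omred$ split into two pairs, each summing to $\frac12$. The pairing is either adjacent ($L+\ell_3=\ell_4+\ell_5$ or $L+\ell_5=\ell_3+\ell_4$) or opposite ($L+\ell_4=\ell_3+\ell_5=\frac12$).

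\textbf{Step 2.} Apply Proposition~\ref{prop:n_smoothbdd2}(a),(b) to $\omred$ with $n-1=4$. The argument of Corollary~\ref{cor:lc* size} gives $|\Lc^*(\omred)|=1$, and Remark~\ref{rem:shortest} then gives that the shortest length $\ell_0$ occurs at least twice, and at least three times unless $2\ell_0$ is a length. Combined with the pair sums being $\frac12$ and $L<\frac12$, this leaves a short list of multisets $\Lc(\omred)$: $\{\frac14,\frac14,\frac14,\frac14\}$ and $\{\frac16,\frac16,\frac13,\frac13\}$, plus a few others such as $\{\frac18,\frac18,\frac14,\frac12\}$. The last is excluded because no edge has length $\frac12$, including $L$. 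The remaining candidates have to be checked one at a time.

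\textbf{Step 3.} For each surviving multiset, consider every placement of $L$ among the four cyclic positions and check that the frequency terms genuinely cancel. The cancellation condition requires that terms with equal frequency come with coefficients of opposite sign, which is what Proposition~\ref{prop:n_smoothbdd2}(a) encodes via $\A^+$. At most one angle of $\omred$ lies in $\A^+$, since $\al_1\le\frac{\pi}{3}$ uses up most of the angle budget; convexity and angle-sum considerations also apply.
\begin{itemize}
\item The equilateral case gives the first line of the conclusion.
\item In the $\{\frac16,\frac16,\frac13,\frac13\}$ case, I expect the pair-sum structure to force the two equal lengths to be opposite each other, with $L$ opposite $\ell_4$. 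This yields the second and third lines.
\end{itemize}

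I expect the main obstacle to be this sign and cancellation bookkeeping. The issue is ruling out adjacent placements, e.g.\ $\ell_3=\ell_4=\frac16$, by showing that some frequency, such as $|1-2\ell_3|$, or $|1-2(\ell_3+\ell_4)|$ versus the terms from $L$, appears with coefficients that cannot sum to zero given that only one angle lies in $\A^+$. I would try first to isolate an edge whose two endpoint angles both lie in $\A^-$, which forces a positive coefficient that nothing else can cancel.
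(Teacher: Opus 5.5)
Your Steps 1 and 2 follow the paper's reduction to $\Lc(\omred)\in\{\{\frac14,\frac14,\frac14,\frac14\},\{\frac16,\frac16,\frac13,\frac13\}\}$, but you should finish the enumeration rather than leave ``a few others'' to be checked:
\begin{itemize}
\item multiplicity $4$ of $\ell_0$ gives all edges equal to $\frac14$;
\item multiplicity $3$ is incompatible with a $2$--$2$ split into halves;
\item multiplicity $2$ forces $\{\ell_0,\ell_0,2\ell_0,y\}$ with $y\in\{2\ell_0,3\ell_0,4\ell_0\}$. Here $y=3\ell_0$ (sevenths) has no sub-sum equal to $\frac12$, and $y=4\ell_0$ has an edge of length $\frac12$.
\end{itemize}
Note that $\{\frac18,\frac18,\frac14,\frac12\}$ never satisfied your pair-sum condition in the first place.

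The genuine gap is in Step 3, which is the only nontrivial part of the proof: you never rule out the arrangement of $\{\frac16,\frac16,\frac13,\frac13\}$ in which the equal edges are adjacent. Your expectation that the pair-sum structure forces alternation is false. In the cyclic order $\frac16,\frac16,\frac13,\frac13$, the two adjacent pairs $\{\frac16,\frac13\}$ each sum to $\frac12$.

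Your sign heuristic does not close this case either. Label the vertices of $\omred$ as $\gamma_1,\dots,\gamma_4$, with the $\frac16$-edges being $\gamma_1\gamma_2$ and $\gamma_2\gamma_3$. Apart from $1$ and $0$, the only frequencies are $\frac23$ and $\frac13$:
\begin{itemize}
\item frequency $\frac23$ has coefficient $c(\gamma_2)[c(\gamma_1)+c(\gamma_3)]$;
\item frequency $\frac13$ has coefficient $c(\gamma_4)[c(\gamma_1)+c(\gamma_3)]+c(\gamma_1)c(\gamma_3)$.
\end{itemize}
Sign considerations at frequency $\frac23$ only force exactly one of $\gamma_1,\gamma_3$ into $\A^+$. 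In that configuration, both coefficients are sums of terms of mixed sign. In particular, the $\frac13$-edge whose endpoints both lie in $\A^-$ contributes a positive term that is perfectly sign-compatible with cancellation. So no $\A^\pm$ bookkeeping at nonzero frequencies produces a contradiction.

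What is missing is the exact algebra, which is how the paper argues. Since $c(\gamma_2)\neq 0$, vanishing at frequency $\frac23$ gives $c(\gamma_1)=-c(\gamma_3)$. Substituting this into the frequency-$\frac13$ coefficient leaves $c(\gamma_1)c(\gamma_3)=0$, contradicting the fact that $\omred$ has no odd angles.

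Alternatively, $c(\gamma_1)=-c(\gamma_3)$ forces $\gamma_2,\gamma_4\in\A^-$. Then the constant term $c(\gamma_2)c(\gamma_4)\bigl(1-c(\gamma_1)^2\bigr)\pm\prod_j s(\gamma_j)$ is strictly greater than $-1$, because no $\gamma_j$ is odd, so it cannot equal $-1$.

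Once adjacency is excluded, alternation places $\ell_1+\ell_2$ opposite $\ell_4$, which gives the second and third lines of the statement.
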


\begin{proof} Applying Propositions~\ref{prop:n_smoothbdd2} and \ref{prop:n_smoothbdd1}, along with Corollary~\ref{cor:not 1/2}, we find that the only possibilities for $\Lc(\omred)$ are $\left(\frac{1}{4},\frac{1}{4}, \frac{1}{4}, \frac{1}{4}\right)$ and $\left(\frac{1}{6},\frac{1}{6}, \frac{1}{3}, \frac{1}{3}\right)$.   To complete the proof, it suffices to show in the latter case that the edge lengths $\frac{1}{6}$ and $\frac{1}{3}$ must occur in alternating order in $\omred$.

Suppose to the contrary that the edges of equal length are adjacent.  For simplicity, we choose a new cyclic labelling, $\gamma_1, \dots, \gamma_4$, of the vertices of $\omred$ so that the two edges of length $\frac{1}{6}$ have endpoints $\gamma_1, \gamma_2$ and $\gamma_2,\gamma_3$, respectively.  Consequently, $\gamma_4$ is the common endpoint of the two edges of length $\frac{1}{3}$.   Recall that the characteristic polynomials of $\Om$ and of $\omred$ are identical except possibly for the sign of the constant term.  Thus comparing the coefficient of $\cos\left(\frac{2}{3}t\right)$ in $P_{\Om}$, equivalently in $P_{\omred}$, with that of a smoothly bounded simply-connected domain, we obtain 
\beq\label{eq:2/3 coef} c(\gamma_2)[c(\gamma_1)+c(\gamma_3)]=0.\eeq
Similarly comparing the coefficients of $\cos\left(\frac{1}{3}t\right)$, we obtain
\beq\label{eq:1/3 coef} c(\gamma_4)[c(\gamma_1)+c(\gamma_3)] + c(\gamma_1)c(\gamma_3)=0.\eeq
All of the $c(\gamma_j)$ are non-zero since $\omred$ has no odd angles.   Thus Equation~\eqref{eq:2/3 coef} implies that $c(\gamma_1)+c(\gamma_3)=0$.   But then Equation~\eqref{eq:1/3 coef} implies that one of $c(\gamma_1)$ or $c(\gamma_3)$ must vanish, yielding a contradiction.  This completes the proof.
\end{proof}

\begin{remark}\label{rem:equil triangle comparison}
Since the characteristic polynomial of an equilateral triangle agrees with that of a simply-connected smoothly bounded domain except for the sign of the constant term, the results in this subsection can be adapted to obtain necessary conditions for a convex $n$-gon, $n\geq 5$, of perimeter one to have the same characteristic polynomial as an equilateral triangle. First note that Proposition~\ref{prop:vertices} shows that the cosine terms in the characteristic polynomial distinguish all admissible curvilinear $n$-gons with $n\geq 2$ from the equilateral triangle.  Admissible 1-gons are distinguished from the equilateral triangle by their constant term.   Next, replacing ``simply-connected  smoothly bounded domain'' by ``equilateral triangle'' throughout, Lemma~\ref{lem:angle in +}, Proposition~\ref{prop:n_smoothbdd2}, Remark \ref{rem:shortest}, and Corollary \ref{cor:lc* size} all go through without change since they only rely on the cosine terms.   In the case of Proposition \ref{prop:n_smoothbdd1}, we need to replace ``opposite parity'' by ``same parity'' in part (i) and replace ``negative'' by ``positive'' in part (ii).   The proof then goes through with minor modifications, and then Corollary~\ref{cor:equilateral} goes through without change.   

Corollary~\ref{cor:not 1/2} also goes through with some changes in the proof as follows:    Define $T^*$, $\Om'$, $\beta_2$, $\beta_n$, $\alpha_2'$, $\alpha_n'$ and $s$ as before.  Since $\al_2$ and $\al_n$ must now be even angles of the same parity, we find that $\al_2=\al_n=\frac{\pi}{2}$ and thus, a priori, $\alpha_1$ can be any odd angle.   We have $\beta_2 + \beta_n \geq \frac{2\pi}{3}$ and thus $\alpha_2'+\alpha_n'\leq \pi-\frac{2\pi}{3} =\frac{\pi}{3}$.  A geometric argument shows that among all convex polygons $P$ with one edge of length $s$ and adjacent angles $\alpha_2'$ and $\alpha_n'$, the sum $S$ of the lengths of the remaining edges is maximized when $P$ is the triangle $T'$ determined by this data.   Moreover, $S$ is at most the sum of the two equal edge lengths of the isosceles triangle with base $s$ and adjacent angles $\gamma:=\frac{\alpha_2'+\alpha_n'}{2}$.  In our case $\gamma \leq \frac{\pi}{6}$, so we have 
$$\ell_3+\ell_4+\dots+\ell_n = S\leq \frac{s}{\cos(\pi/6)} = \frac{2s}{\sqrt{3}}<1.2 s.$$ The supremum $s^*$ of $s$ is the same as before, obtained when the odd angle $\alpha_1$ is maximal, i.e., $\alpha_1=\frac{\pi}{3}$ and $T$ is a right triangle; thus $s<0.3$, giving the desired contradiction $\ell_3+\ell_4+\dots+\ell_n<\frac{1}{2}$.

Example~\ref{exa:penta} and Proposition~\ref{res:pentagons} then go through without change.

\end{remark}

Based on the evidence throughout this section, we make the following bold conjecture:

\begin{conj}
A convex polygonal domain cannot be Steklov isospectral to a simply-connected smoothly bounded domain.
\end{conj}

\end{document}